\documentclass[11pt, twoside]{article}
\pdfoutput=1

\usepackage{graphicx}
\usepackage[caption=false]{subfig}
\captionsetup[subtable]{position=bottom}
\captionsetup[table]{position=bottom}
\usepackage{amsmath}
\usepackage{amssymb,amsfonts}
\usepackage{amsthm}
\usepackage{bm}
\usepackage{mathrsfs}
\usepackage{amssymb}
\usepackage{multirow}

\newcommand{\norm}[1]{\left\lVert#1\right\rVert}

\usepackage[margin=1in]{geometry}

\usepackage{algorithm}
\usepackage{algorithmic}

\numberwithin{equation}{section}
\usepackage{multirow}
\usepackage{makecell}
\usepackage{booktabs}
\theoremstyle{definition}
\newtheorem{theorem}{Theorem}

\newtheorem{lemma}{Lemma}

\newtheorem{remark}{Remark}

\usepackage{cite}
\usepackage{hyperref}
\usepackage[nameinlink]{cleveref}

\usepackage{fancyhdr}
\pagestyle{fancy}
\fancyhead[EC]{G.\ Chen, L.\ Pi,  Y.\ Zhang}
\fancyhead[OC]{ A New Ensemble HDG Method for Parameterized Convection Diffusion PDEs}
\fancyhead[L,R]{}
\cfoot{\thepage}

\begin{document}
	
	\title{A New Ensemble HDG Method for Parameterized Convection Diffusion PDEs}

\author{Gang Chen%
	\thanks{School of Mathematics, Sichuan University, Chengdu, China (\mbox{cglwdm@scu.edu.cn}).}
	\and
	Liangya Pi%
	\thanks{Department of Mathematics and statistical, Missouri University of Science and Technology, Rolla, MO, USA (\mbox{lpp4f@mst.edu}).}
	\and
	Yangwen Zhang%
	\thanks{Department of Mathematics Science, University of Delaware, Newark, DE, USA (\mbox{ywzhangf@udel.edu}).}
}

\date{\today}

\maketitle

\begin{abstract}
	{We devised a first order time stepping ensemble hybridizable discontinuous Galerkin (HDG) method for a group of parameterized convection diffusion PDEs with  different  initial conditions, body forces, boundary conditions and coefficients in our earlier work \cite{PiChenZhangXu1}. We obtained an optimal convergence rate for the ensemble solutions in $L^\infty(0,T;L^2(\Omega))$ on a simplex mesh; and obtained a superconvergent rate for the ensemble solutions in $L^2(0,T;L^2(\Omega))$ after an element-by-element postprocessing if polynomials degree $k\ge 1$ and the coefficients of the PDEs are independent of time. In this work, we propose a new second order time stepping ensemble HDG method to revisit the problem.  We obtain a superconvergent rate for the ensemble solutions in $L^\infty(0,T;L^2(\Omega))$ without an element-by-element postprocessing for all polynomials degree $k\ge 0$. Furthermore, our mesh can be any polyhedron, no need to be simplex; and the coefficients of the PDEs can dependent on time. Finally, we present numerical experiments to  confirm our theoretical results.}
\end{abstract}

\section{Introduction}
In this work, we propose a new second order time stepping  ensemble hybridizable discontinuous Galerkin (HDG) method to efficiently simulate a group of parameterized convection diffusion equations on a Lipschitz polyhedral domain  $\Omega\subset \mathbb{R}^{d} $ $ (d\geq 2)$. For $j=1,2, \ldots, J$,  find $(\bm q_j, u_j)$ satisfying
\begin{equation}\label{convection_pde}
\begin{aligned}
c_j\bm{q}_j +\nabla u_j&= 0 \quad  \  \mbox{in} \; \Omega\times (0,T],\\
\partial_tu_j+{\nabla}\cdot \bm{q}_j
+\bm\beta_j\cdot\nabla u_j&= f_j \quad  \mbox{in} \; \Omega\times (0,T],\\
u_j&=g_j \quad  \mbox{on} \; \partial\Omega\times (0,T],\\
u_j(\cdot,0)&=u_j^0 \ \ \  \;\mbox{in}\ \Omega,
\end{aligned}           
\end{equation}
where 
$c_j:=c_j(\bm x,t)$,
$f_j:=f_j(\bm x,t)$, 
$g_j:=g_j(\bm x,t)$, 
$\bm \beta_j:=\bm\beta_j(\bm x,t)$,
and $u_j^0:=u_j^0(\bm x)$ are given functions.

For many computational applications in real life, one needs to solve a group of PDEs with different input conditions, like the applications in petroleum engineering, which need to predict the transport properties of rock core-sample in centimeter scale. We need to capture the flow capacity of every single nanopore with different inputs,  and the porous media of shale core-sample is composed of more than $10^6$ pores. However, to efficiently simulate a group of PDEs with different inputs is a great challenge. 

A first order time stepping ensemble method was proposed by \cite{Jiang_Layton_Flow_UQ_2014} to study a set of $J$ solutions of the Navier-Stokes equations with different initial conditions and forcing terms. The $J$ solutions are computed simultaneously by solving a linear system with one common coefficient matrix and multiple RHS vectors. This leads to a great computational efficiency in linear solvers when either the LU factorization (for small-scale systems) or a block iterative algorithm (for large-scale systems) is used. Later,  a second order time stepping ensemble algorithm was designed by \cite{Jiang_Fluid_JSC_2015}. Recently,  a new ensemble method was proposed to treat the PDEs have different coefficients, see \cite{Gunzburger_Jiang_Wang_Flow_IMAJNA_2018,Gunzburger_Jiang_Wang_Flow_CMAM_2017}. The ensemble method has been applied to many different models; see, e.g., \cite{Jiang_NS_NUPDE_2017,Jiang_Layton_Fluid_NMPDE_2015,Jiang_Tran_Flow_CMAM_2015,Gunzburger_Jiang_Schneier_NS_SINUM_2017,Gunzburger_Jiang_Schneier_NS_IJNAM_2018,Fiordilino_Bousinesq_SINUM_2018,Luo_Wang_Heat_SINUM_2018}. It is worthwhile to mention that the previous works only obtained a \emph{suboptimal} $L^\infty(0,T;L^2(\Omega))$ convergence rate  for the ensemble solutions. 

More recently, we proposed a first order time stepping ensemble hybridizable discontinuous Galerkin (HDG) method in \cite{PiChenZhangXu1} to study a group of convection diffusion PDEs with different initial conditions, boundary conditions, body forces and coefficients. We obtained an \emph{optimal} $L^\infty(0,T; L^2(\Omega))$ convergence rate  for the solutions on a simplex mesh,  and  we obtained a $L^2(0,T; L^2(\Omega))$ superconvergent rate  if  the polynomials of degree $k\ge 1$ and the coefficients of the PDEs are independent of time. This  ensemble HDG method uses polynomials of degree $k$ for all variables, i.e., the flux variables $\bm q_j $ and  the scalar variables $u_j$.

In this work, we devise a new second order time stepping ensemble HDG method, this  method uses  polynomials degree $k$ to approximate the fluxes  and the numerical traces, while using polynomials degree $k+1$ to approximate the scale variable. This method was proposed by      \cite{Lehrenfeld_PhD_thesis_2010} and  later analyzed by \cite{Oikawa_Poisson_JSC_2015} for a single  steady elliptic PDE, they obtained a  superconvergent rate for the scalar variable for all $k\ge 0$.  This HDG method has been  extended to study the PDEs with a convection term by \cite{Qiu_Shi_Convection_Diffusion_JSC_2016,Qiu_Shi_NS_IMAJNA_2016}.  However, the superconvergent rate was lost when $k=0$.

In this paper, we first restore the superconvergence for  $k=0$ by modifying the stabilization function in \cite{Qiu_Shi_Convection_Diffusion_JSC_2016}. Next, we show that the new ensemble HDG method can obtain a  $L^\infty(0,T;L^2(\Omega))$ superconvergent rate for all $k\ge 0$ on a general polyhedron mesh and  without assume the coefficients are independent of time. It is worth  mentioning  that  this new ensemble HDG method keep the advantages of  the ensemble methods, i.e., all realizations share one common coefficient matrix and multiple RHS vectors at each time step, which can be solved efficiently by some exist solvers as we mentioned previously.

The paper is organized as follows. We introduce the  improved HDG formulation and the ensemble HDG method in \Cref{EnsembleHDGFormulation}. Next, we give some preliminary materials  and  prove the ensemble HDG method is unconditionally stable in \Cref{Stability}.  Then we  give a rigorous error analysis in \Cref{errroanalysis}. Finally, we provide some numerical experiments to confirm our  theoretical result in \Cref{numericalexperiments}.

\section{The Ensemble HDG Formulation}
\label{EnsembleHDGFormulation}

The HDG methods were proposed by \cite{Cockburn_Gopalakrishnan_Lazarov_Unify_SINUM_2009}, which  are based on a mixed formulation and introduce a numerical flux and a numerical trace to approximate the flux and the trace of the solution. The global system involves the numerical trace only since we can element-by-element eliminate the numerical flux and the  solution. Therefore, the  HDG methods have a significantly smaller number of globally coupled degrees of freedom compare to DG methods.  The HDG methods have been extended to many models;  see, e.g., \cite{Chen_Cockburn_Convection_Diffusion_MathComp_2014,Cockburn_Shi_Stokes_MathComp_2013,Sanchez_Ciuca_Nguyen_Peraire_Cockburn_Hamiltonian_JCP_2017,Cockburn_Zhixing_Hungria_Waves_JSC_2018,Vidal_Nguyen_Peraire_electromagnetic_JCP_2018}. We emphasize that the HDG method in this work is considered to be a superconvergent method. Specifically, if polynomials of degree $k \ge 0$ are used for the numerical traces (global system), then we can obtain $k+2$ order for the scalar variables; see, e.g., \cite{Qiu_Shen_Shi_elasticity_MathComp_2018,Qiu_Shi_Convection_Diffusion_JSC_2016,Qiu_Shi_NS_IMAJNA_2016}. Hence, from the viewpoint of globally coupled degrees of freedom, this method achieves superconvergence for the scalar variable.

To describe the ensemble HDG method, we introduce some notation.  Let $\mathcal{T}_h$ be a collection of disjoint  shape regular polyhedral $K$ that partition $\Omega$. Here by shape regular we refer to \cite{Chen_Monk_Peter1}. Let $\partial \mathcal{T}_h$ denote the set $\{\partial K: K\in \mathcal{T}_h\}$. For an element $K$ of the collection  $\mathcal{T}_h$, let $e = \partial K \cap \partial \Omega$ denote the boundary face of $ K $ if the $d-1$ Lebesgue measure of $e$ is non-zero. For two elements $K^+$ and $K^-$ of the collection $\mathcal{T}_h$, let $e = \partial K^+ \cap \partial K^-$ denote the interior face between $K^+$ and $K^-$ if the $d-1$ Lebesgue measure of $e$ is non-zero. Let $\mathcal{E}_h^o$ and $\mathcal{E}_h^{\partial}$ denote the sets of interior and boundary faces, respectively, and let $\mathcal{E}_h$ denote the union of  $\mathcal{E}_h^o$ and $\mathcal{E}_h^{\partial}$.   For each $K\in\mathcal{T}_h$, let $h_K$  denote the diameter of the smallest $d$-dimensional ball contain $K$, and $h=\max_{K\in\mathcal{T}_h}h_K$.
We finally set
\begin{align*}
	(w,v)_{\mathcal{T}_h} := \sum_{K\in\mathcal{T}_h} (w,v)_K,   \quad\quad\quad\quad\langle \zeta,\rho\rangle_{\partial\mathcal{T}_h} := \sum_{K\in\mathcal{T}_h} \langle \zeta,\rho\rangle_{\partial K},
\end{align*}
where  $(\cdot,\cdot)_K$  and $\langle \cdot,\cdot\rangle_{\partial K}$ denote the standard $L^2$ inner product.

For any integer $k\ge 0$, let $\mathcal{P}^k(K)$ denote the set of polynomials of degree at most $k$ on the element $K$.  We  recall  the standard $L^2$  projection operators $\Pi_{\ell}: L^2(K)\to \mathcal P^{\ell}(K)$ and $P_M : L^2(e)\to \mathcal P^{k}(e)$ satisfying
\begin{subequations}\label{L2}
	\begin{align}
		(\Pi_{\ell} u, w)_K &= (u,w)_K & \forall w\in \mathcal P^{\ell}(K),\label{L2_do}\\
		\langle P_M  u, \mu \rangle_e &= \langle u,\mu \rangle_e  & \forall \mu \in \mathcal P^{k}(e).\label{L2_edge}
	\end{align}
\end{subequations} 
Moreover, the vector $L^2$ projection $\bm \Pi_{\ell}$ is defined similarly.

We consider the discontinuous finite element spaces:
\begin{align*}
	\bm{V}_h  &:= \{\bm{v}\in [L^2(\Omega)]^d: \bm{v}|_{K}\in [\mathcal{P}^k(K)]^d, \forall K\in \mathcal{T}_h\},\\
	{W}_h  &:= \{{w}\in L^2(\Omega): {w}|_{K}\in \mathcal{P}^{k+1}(K), \forall K\in \mathcal{T}_h\},\\
	{M}_h(g)  &:= \{{\mu}\in L^2(\mathcal{E}_h): {\mu}|_{e}\in \mathcal{P}^k(e), \forall e\in \mathcal{E}_h,\mu|_{\mathcal{E}_h^\partial} = P_M g\}.
\end{align*}

For $ w_h \in W_h $ and $ \bm r_h \in \bm V_h $, let $ \nabla v_h $ and $ \nabla \cdot \bm r_h $ denote the gradient of $ w_h $ and the divergence of $ \bm r_h $ applied piecewise on each element $K\in \mathcal T_h$.

\subsection{The Improved HDG Method} 
Next, we consider the spatial  semidiscretization for \eqref{convection_pde} by an improved  HDG method.  For all $j = 1,2,\ldots, J$,  find $(\bm q_{jh},u_{jh},\widehat u_{jh})\in \bm V_h\times W_h\times M_h(g_j)$ satisfying 
\begin{equation}\label{semi_discretization}
\begin{split}
(c_j\bm{q}_{jh},\bm r_j)_{\mathcal T_h} - (u_{jh}, \nabla\cdot\bm r_j)_{\mathcal T_h}  + \langle \widehat u_{jh}, \bm r_j\cdot\bm n\rangle_{\partial \mathcal T_h}&= 0,\\
(\partial_tu_{jh}, w_j)_{\mathcal T_h} - (\bm{q}_{jh},\nabla w_j)_{\mathcal T_h} + \langle \widehat {\bm q}_{jh}\cdot \bm n, w_j\rangle_{\partial\mathcal T_h} &\\
-( \bm \beta_j u_{jh},\nabla w_j)_{\mathcal T_h} - ((\nabla\cdot\bm \beta_j) u_{jh} , w_j)_{\mathcal T_h}+\langle \bm\beta_j\cdot\bm n \widehat u_{jh}, w_j\rangle_{\partial \mathcal T_h} &=   (f_j,w_j)_{\mathcal T_h} ,\\
\langle \widehat {\bm q}_{jh}\cdot \bm n,  \mu_j\rangle_{\partial \mathcal T_h}&=0
\end{split}
\end{equation}
for all $(\bm r_j, w_j,\mu_j)\in \bm V_h\times W_h\times M_h(0)$. The numerical traces on $\partial\mathcal{T}_h$ are defined as
\begin{equation}\label{stabilization_1}
\begin{split}
\widehat{\bm{q}}_{jh}\cdot \bm n &=\bm q_{jh}\cdot\bm n + h_K^{-1} (P_M u_{jh}-\widehat u_{jh}).
\end{split}
\end{equation}

\begin{remark}
	The stabilization functions  in \cite{Qiu_Shi_Convection_Diffusion_JSC_2016} are defined as following
	\begin{equation}\label{stabilization_2}
	\widehat{\bm{q}}_{jh}\cdot \bm n =\bm q_{jh}\cdot\bm n + h_K^{-1} (P_M u_{jh}-\widehat u_{jh})  + \tau_j^C (u_{jh}-\widehat u_{jh}),
	\end{equation}
	where $\tau_j^C$  are positive stabilization functions defined on $\partial \mathcal T_h$. Compare with our stabilization function \eqref{stabilization_1}, a upwind term in \eqref{stabilization_2} was added to guarantee  the wellpossdness but destroy the superconvergence when $k=0$; see, e.g., \cite{Qiu_Shi_Convection_Diffusion_JSC_2016} for a single convection diffusion PDE  and \cite{HuShenSinglerZhangZheng_HDG_Dirichlet_control3} for an optimal control problem.
\end{remark}

\subsection{The Ensemble HDG Formulation}
It is easy to see that the system \eqref{semi_discretization}-\eqref{stabilization_1} has $J$ different coefficient matrices since $c_j^n$ and $\bm{\beta}_j^n$ are different for each $j$, the superscript $n$ denotes the function value at the time $t_n$. The main idea of the ensemble algorithms is change the variables $c_j^n$ and  $\bm \beta_j^n$ into their ensemble means: 
\begin{align}\label{ensemblemena}
	\overline c^n = \frac 1J \sum_{j=1}^J c_j^n \qquad \textup{and} \qquad
	\overline{\bm{\beta}}^n = \frac 1J \sum_{j=1}^J \bm{\beta}_j^n.
\end{align}
Next, we suppose the time domain is uniformly partition into $N$ steps and the time step is $\Delta t:=T/N$. Let $t_n := n\Delta t$ for $n=2,3,\cdots, N$,  we define 
\begin{align*}
	\partial^+_t w^n = \frac{3w^n-4w^{n-1}+w^{n-2}}{2\Delta t}.
\end{align*}

For all $j=1,2,\ldots, J$ and $n=2,3,\cdots,N$,   our second order time stepping ensemble HDG method finds  $(\bm q^n_j,u^n_j,\widehat u^n_j)\in \bm V_h\times W_h\times M_h(g_j)$  satisfying
\begin{subequations}\label{full_discretization1}
	\begin{align}\label{full_discretizationa}
		(\overline c^n\bm{q}^{n}_{jh},\bm{r}_j)_{\mathcal{T}_h}-(u^n_{jh},\nabla\cdot \bm{r}_j)_{\mathcal{T}_h}+\langle\widehat{u}^n_{jh},\bm{r}_j\cdot \bm n \rangle_{\partial{\mathcal{T}_h}}=((\overline c^n-c_j^n)(2\bm{q}^{n-1}_{jh}-\bm{q}^{n-2}_{jh}),\bm{r}_j)_{\mathcal{T}_h},
	\end{align}
	for all $\bm r_j\in \bm V_h$, and 
	\begin{align}\label{full_discretizationb}
		\begin{split}
			&\qquad (\partial^+_tu^n_{jh},w_j)_{\mathcal T_h}-(\bm{q}^n_{jh}, \nabla w_j)_{\mathcal{T}_h} 	+\langle\widehat{\bm{q}}^n_{jh}\cdot \bm{n},w_j\rangle_{\partial{\mathcal{T}_h}} \\
			&\qquad -( \nabla\cdot\overline{\bm\beta}^n u_{jh}^n,w_j)_{\mathcal{T}_h}
			-(\overline{\bm{\beta}}^nu_{jh}^n,\nabla w_j)_{\mathcal{T}_h}+\langle (\overline{\bm{\beta}}^n\cdot\bm n) \widehat{u}^n_{jh},v_j \rangle_{\partial\mathcal{T}_h}\\
			&= \quad (f^n_j,w_j)_{\mathcal{T}_h}
			-([\nabla\cdot(\overline{\bm\beta}^n-\bm\beta_j^n)](2u_{jh}^{n-1}-u_{jh}^{n-2}),w_j)_{\mathcal{T}_h}\\
			&\qquad -((\overline{\bm{\beta}}^n-\bm\beta_j^n)(2u_{jh}^{n-1}-u_{jh}^{n-2}),\nabla w_j)_{\mathcal{T}_h}\\
			&\qquad +\langle [(\overline{\bm{\beta}}^n-\bm{\beta}_j^n)\cdot\bm n] ( 2\widehat{u}^{n-1}_{jh}-\widehat{u}^{n-2}_{jh}),w_j \rangle_{\partial\mathcal{T}_h}
		\end{split}
	\end{align}
	for all $w_j \in W_h$, and 
	\begin{align}\label{full_discretizationc}
		\langle \widehat{\bm q}^n_{jh}\cdot\bm n,\mu_j \rangle_{\partial\mathcal{T}_h}=0
	\end{align}
	for all $\mu_j\in M_h(0)$ and the numerical fluxes  are defined by
	\begin{align}\label{flux}
		\widehat{\bm q}_{jh}^n\cdot\bm n=\bm q_{jh}^n\cdot\bm n
		+h_K^{-1}(P_M u_{jh}^n-\widehat{u}_{jh}^n).
	\end{align}
\end{subequations}

To start up the  second order time stepping ensemble HDG system \eqref{full_discretization1}, besides the initial condition $(\bm q_{jh}^0,u_{jh}^0, \widehat u_{jh}^0)$, we need the information of $(\bm q_{jh}^1,u_{jh}^1,\widehat u_{jh}^1)$.  We take the initial conditions $u_{jh}^0 = \Pi_{k+1} u_0$, $ \bm q_{jh}^0=-\nabla u_{jh}^0/c_j^{0}$. Since  $u_{jh}^0$ is double-valued on $\mathcal E_h$, then restict $u_{jh}^0$ on $\mathcal E_h$ is doulbe valued. Therefore, we only take one as the initial condition for $\widehat u_{jh}^0$. Followed in \cite{Gunzburger_Jiang_Wang_Flow_CMAM_2017},  $(\bm q_{jh}^1,u_{jh}^1, \widehat u_{jh}^1)$ is computed by the following first order ensemble HDG method:
\begin{subequations}
	\begin{align*}
		(\overline c^1\bm{q}^1_{jh},\bm{r}_j)_{\mathcal{T}_h}-(u^1_{jh},\nabla\cdot \bm{r}_j)_{\mathcal{T}_h}+\langle\widehat{u}^1_{jh},\bm{r}_j\cdot \bm n \rangle_{\partial{\mathcal{T}_h}}=((\overline c^1-c_j^1)\bm{q}^0_{jh},\bm{r}_j)_{\mathcal{T}_h}
	\end{align*}
	for all $\bm r_j\in \bm V_h$, and 
	\begin{align*}
		\begin{split}
			&\qquad ((u^1_{jh}-u^0_{jh})/ \Delta t,w_j)_{\mathcal T_h}-(\bm{q}^1_{jh}, \nabla w_j)_{\mathcal{T}_h} 	+\langle\widehat{\bm{q}}^1_{jh}\cdot \bm{n},w_j\rangle_{\partial{\mathcal{T}_h}} \\
			&\qquad -( \nabla\cdot\overline{\bm\beta}^1 u_{jh}^1,w_j)_{\mathcal{T}_h}
			-(\overline{\bm{\beta}}^1u_{jh}^1,\nabla w_j)_{\mathcal{T}_h}+\langle (\overline{\bm{\beta}}^1\cdot\bm n) \widehat{u}^1_{jh},v_j \rangle_{\partial\mathcal{T}_h}\\
			& =\quad(f^1_j,w_j)_{\mathcal{T}_h}
			-([\nabla\cdot(\overline{\bm\beta}^1-\bm\beta_j^1)]u_{jh}^0,w_j)_{\mathcal{T}_h}\\
			&\qquad -((\overline{\bm{\beta}}^1-\bm\beta_j^1)u_{jh}^0,\nabla w_j)_{\mathcal{T}_h}
			+\langle [(\overline{\bm{\beta}}^1-\bm{\beta}_j^1)\cdot\bm n]  \widehat{u}_{jh}^0,w_j \rangle_{\partial\mathcal{T}_h}
		\end{split}
	\end{align*}
	for all $w_j\in W_h$. 
\end{subequations}

\begin{lemma}\label{hdg}
	System \eqref{full_discretizationa}-\eqref{flux} is equivalent  to the  following system
	\begin{subequations}\label{full_discretion_ensemble}
		\begin{align}
			(\overline c^n\bm{q}^{n}_{jh},\bm{r}_j)_{\mathcal{T}_h}-(u^n_{jh},\nabla\cdot \bm{r}_j)_{\mathcal{T}_h}+\langle\widehat{u}^n_{jh},\bm{r}_j\cdot \bm n \rangle_{\partial{\mathcal{T}_h}} 
			=
			((\overline c^n-c_j^n)(2\bm{q}^{n-1}_{jh}-\bm{q}^{n-2}_{jh}),\bm{r}_j)_{\mathcal{T}_h},\label{full_discretion_ensemble_a}
		\end{align}
		\begin{align}\label{full_discretion_ensemble_b}
			\begin{split}
				&\quad (\partial^+_tu^n_{jh},w_j)_{\mathcal T_h}+(\nabla\cdot\bm{q}^n_{jh},  w_j)_{\mathcal{T}_h}
				-\langle\bm q_{jh}^n\cdot\bm n, \mu_j\rangle_{\partial\mathcal{T}_h}-((\nabla\cdot\overline{\bm\beta}^n) u_{jh}^n,w_j)_{\mathcal{T}_h}\\
				&\quad 
				-(\overline{\bm{\beta}}^nu_{jh}^n,\nabla w_j)_{\mathcal{T}_h}+\langle (\overline{\bm{\beta}}^n\cdot\bm n) \widehat{u}^n_{jh},w_j \rangle_{\partial\mathcal{T}_h}+\langle h_K^{-1} (P_M u_{jh}^{n}-\widehat{u}_{jh}^n),P_M w_j-\mu_j\rangle_{\partial{\mathcal{T}_h}}\\
				&=(f^n_j,w_j)_{\mathcal{T}_h}-([\nabla\cdot(\overline{\bm\beta}^n-\bm\beta_j^n)](2u_{jh}^{n-1}-u_{jh}^{n-2}),w_j)_{\mathcal{T}_h}\\
				&\quad -((\overline{\bm{\beta}}^n-\bm\beta_j^n)(2u_{jh}^{n-1}-u_{jh}^{n-2}),\nabla w_j)_{\mathcal{T}_h}+\langle [(\overline{\bm{\beta}}^n-\bm{\beta}_j^n)\cdot\bm n] (2\widehat{u}^{n-1}_{jh}-\widehat{u}^{n-2}_{jh}),w_j \rangle_{\partial\mathcal{T}_h}
			\end{split}
		\end{align}
		for all $(\bm r_j, w_j,\mu_j)\in \bm V_h\times W_h\times M_h(0)$. 
	\end{subequations}
\end{lemma}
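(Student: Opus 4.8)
The plan is to establish the equivalence equation by equation. Since \eqref{full_discretion_ensemble_a} coincides verbatim with \eqref{full_discretizationa}, the entire content of the lemma is to show that the single combined equation \eqref{full_discretion_ensemble_b}, imposed for all pairs $(w_j,\mu_j)\in W_h\times M_h(0)$, encodes exactly the same information as \eqref{full_discretizationb} and \eqref{full_discretizationc} together with the flux definition \eqref{flux}.

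First I would start from \eqref{full_discretizationb} and integrate the term $-(\bm q_{jh}^n,\nabla w_j)_{\mathcal T_h}$ by parts element by element, producing $(\nabla\cdot\bm q_{jh}^n,w_j)_{\mathcal T_h}-\langle\bm q_{jh}^n\cdot\bm n,w_j\rangle_{\partial\mathcal T_h}$. Combining the resulting boundary integral with the numerical-trace term $\langle\widehat{\bm q}_{jh}^n\cdot\bm n,w_j\rangle_{\partial\mathcal T_h}$ and inserting \eqref{flux} collapses these into $\langle h_K^{-1}(P_M u_{jh}^n-\widehat u_{jh}^n),w_j\rangle_{\partial\mathcal T_h}$. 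The key algebraic step is then to observe that, on each $\partial K$, the quantity $h_K^{-1}(P_M u_{jh}^n-\widehat u_{jh}^n)$ is a polynomial of degree at most $k$ on every face, so by the defining property \eqref{L2_edge} of $P_M$ one may replace $w_j$ by $P_M w_j$ in this pairing without changing its value. This shows \eqref{full_discretizationb} is precisely \eqref{full_discretion_ensemble_b} evaluated at $\mu_j=0$.

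Next I would insert \eqref{flux} into \eqref{full_discretizationc} to rewrite it as $\langle\bm q_{jh}^n\cdot\bm n+h_K^{-1}(P_M u_{jh}^n-\widehat u_{jh}^n),\mu_j\rangle_{\partial\mathcal T_h}=0$, which is exactly \eqref{full_discretion_ensemble_b} evaluated at $w_j=0$, the right-hand side vanishing because each of its terms is tested against $w_j$ or $\nabla w_j$. Since \eqref{full_discretion_ensemble_b} is linear in $(w_j,\mu_j)$ and its right-hand side depends on $w_j$ alone, one has the splitting of the equation at $(w_j,\mu_j)$ into its values at $(w_j,0)$ and at $(0,\mu_j)$; hence imposing \eqref{full_discretion_ensemble_b} for all test pairs is equivalent to imposing \eqref{full_discretizationb} and \eqref{full_discretizationc} separately. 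Reversing the integration by parts recovers the original form, which gives equivalence in both directions.

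I do not expect a genuine obstacle, as this is a standard HDG reformulation; the only point requiring care is the projection identity. One must verify that $P_M u_{jh}^n-\widehat u_{jh}^n$ indeed lies in $\mathcal P^k(e)$ on each face, so that \eqref{L2_edge} justifies exchanging $w_j$ for $P_M w_j$, and that the element-wise constant weight $h_K^{-1}$, being fixed on each $\partial K$, does not interfere with this face-by-face projection argument.
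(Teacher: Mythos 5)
Your proposal is correct and follows essentially the same route as the paper, which proves the lemma by substituting \eqref{flux} into \eqref{full_discretizationa}--\eqref{full_discretizationc}, integrating by parts, and subtracting \eqref{full_discretizationc} from \eqref{full_discretizationb}; your splitting of \eqref{full_discretion_ensemble_b} into its $(w_j,0)$ and $(0,\mu_j)$ components is exactly this combination read in reverse. The only addition is that you make explicit the projection identity $\langle h_K^{-1}(P_M u_{jh}^n-\widehat u_{jh}^n),w_j\rangle_{\partial\mathcal T_h}=\langle h_K^{-1}(P_M u_{jh}^n-\widehat u_{jh}^n),P_M w_j\rangle_{\partial\mathcal T_h}$, which the paper leaves implicit and which is correctly justified by \eqref{L2_edge} since the weighted difference is facewise in $\mathcal P^k(e)$.
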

The proof of \Cref{hdg} is simply by substituting \eqref{flux} into \eqref{full_discretizationa}-\eqref{full_discretizationc}, substracting \eqref{full_discretizationc} from \eqref{full_discretizationb} and using integration by parts.

\section{Stability}
\label{Stability}

In this paper, we use the standard notation $W^{m,p}(D)$ for Sobolev spaces on $D$ with norm $\|\cdot\|_{m,p,D}$ and seminorm $|\cdot|_{m,p,D}$.  We use  $H^{m}(D)$ instead of $W^{m,p}(D)$ when $p=2$.  We omit the index $p$ and $D$ in the corresponding norms and seminorms when $p=2$ or $D = \Omega$.  Also, we omit the index $m$ when $m=0$ in the corresponding norms.  We denote by $C(0,T;W^{m,s}(\Omega))$ the Banach space of all continuous functions from $[0,T]$ into $W^{m,s}(\Omega)$, and $L^{p}(0,T;W^{m,s}(\Omega))$ for $1\le p\le \infty$ is similarly defined.

To obtain the stability of \eqref{hdg} in this section,  we assume  $ f_j \in C(0,T; L^2(\Omega)) $,  $g_j\in H^1(0,T; \\H^{1/2}(\partial \Omega))$, $u_j^0\in L^2(\Omega)$ and the vector fields $\bm{\beta}_j \in C(0,T; [W^{1,\infty}(\Omega)]^d)$ and  satisfying
\begin{align}\label{beta_con}
	\nabla\cdot\bm{\beta}_j \le 0.
\end{align}
These exists a positive constant $c_0$ such that the coefficients $c_j>c_0$,  and $c_j\in C(0,T; L^\infty(\Omega))$, and the ensemble mean satisfy the following condition
\begin{subequations}\label{cond-c}
	\begin{align}
		|c_{j}^n-\overline{c}^n|&<\frac 1 3 \min\{\overline{c}^n, \overline{c}^{n-1}, \overline{c}^{n-2}\} &n=2,3,\cdots,N,\\
		|c_{j}^1-\overline{c}^1|&< \min\{\overline{c}^1,\overline{c}^{0}\}.
	\end{align}
\end{subequations}

The following error estimates for the $ L^2 $ projections are standard:
\begin{lemma}\label{lemmainter}
	Suppose  integers $k, \ell \ge 0$. There exists a constant $C$ independent of $K\in\mathcal T_h$ such that
	\begin{subequations}
		\begin{align}
			&\|w - \Pi_{\ell}  w\|_K \le Ch^{\ell+1} |w|_{\ell+1,K}  &  &\forall w\in H^{\ell+1}(K), \label{lemmainter_orthoo}\\
			&\|w- P_M w\|_{\partial K} \le Ch^{k+1/2} |w|_{k+1,K}  &  &\forall w\in H^{k+1}(K). \label{lemmainter_orthoe}
		\end{align}
	\end{subequations}
\end{lemma}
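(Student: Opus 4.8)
The plan is to prove both estimates by the classical combination of the optimality of the $L^2$ projection, a Bramble--Hilbert polynomial approximation argument realized through scaling to a reference configuration, and, for the boundary estimate, a trace inequality. Throughout, the shape regularity of $\mathcal{T}_h$ in the sense of \cite{Chen_Monk_Peter1} is what makes every constant below depend only on $k$, $\ell$, $d$, and the shape-regularity parameters, and not on the individual element $K$.

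For \eqref{lemmainter_orthoo}, I would first invoke optimality: since $\Pi_{\ell}$ is the $L^2(K)$-orthogonal projection onto $\mathcal{P}^{\ell}(K)$, we have $\|w-\Pi_{\ell} w\|_K \le \|w-p\|_K$ for every $p\in\mathcal{P}^{\ell}(K)$, so it suffices to bound the best polynomial approximation error. Mapping $K$ affinely onto a fixed reference element $\hat K$ (or, for general polyhedra, onto a member of a fixed finite family of admissible reference shapes), the Bramble--Hilbert lemma gives $\inf_{\hat p}\|\hat w - \hat p\|_{\hat K} \le C\,|\hat w|_{\ell+1,\hat K}$. Tracking the powers of $h_K$ produced by the change of variables on the $L^2$ norm and on the $(\ell+1)$-seminorm then yields the factor $h^{\ell+1}$, which is \eqref{lemmainter_orthoo}. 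The same argument applied to the gradient produces the companion bound $\|\nabla(w-\Pi_{\ell} w)\|_K\le Ch^{\ell}|w|_{\ell+1,K}$, which I will need below.

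For \eqref{lemmainter_orthoe}, I would reduce the face estimate to the element estimate just established. Because a polynomial of degree $k$ on $K$ restricts to a polynomial of degree at most $k$ on each face $e\subset\partial K$, the face-by-face optimality of $P_M$ gives $\|w-P_M w\|_{\partial K}\le \|w-\Pi_k w\|_{\partial K}$. Applying the scaled trace inequality $\|v\|_{\partial K}^2 \le C\bigl(h_K^{-1}\|v\|_K^2 + h_K\|\nabla v\|_K^2\bigr)$, again with a constant controlled by shape regularity, to $v = w-\Pi_k w$, and inserting the two element estimates with $\ell=k$, gives $\|w-\Pi_k w\|_{\partial K}^2 \le C\bigl(h^{2k+1}+h^{2k+1}\bigr)|w|_{k+1,K}^2$, which is exactly \eqref{lemmainter_orthoe} after taking square roots.

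The routine part is the bookkeeping of the $h_K$-powers; the genuinely delicate point is that $\mathcal{T}_h$ consists of general shape-regular polyhedra rather than simplices, so the Bramble--Hilbert estimate and the trace inequality must hold with constants that are uniform over the admissible element shapes. This uniformity is precisely what the shape-regularity hypothesis of \cite{Chen_Monk_Peter1} supplies, either through a compactness argument over a fixed reference family or through a direct scaling argument based on the inscribed- and circumscribed-ball bounds; establishing it carefully in the polyhedral setting is where I would concentrate the real work.
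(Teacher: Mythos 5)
Your proof is correct and is precisely the standard argument the paper implicitly relies on: the paper offers no proof of \Cref{lemmainter} at all, simply labelling these $L^2$-projection estimates as standard. Your route --- optimality of $\Pi_\ell$ plus Bramble--Hilbert/scaling for \eqref{lemmainter_orthoo}, and the reduction $\|w-P_Mw\|_{\partial K}\le\|w-\Pi_k w\|_{\partial K}$ followed by the scaled trace inequality and the element bounds with $\ell=k$ for \eqref{lemmainter_orthoe}, with constants kept uniform over the shape-regular polyhedra of \cite{Chen_Monk_Peter1} --- is exactly how these bounds are established in the references the paper points to.
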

We also use the following local inverse inequality:
\begin{align}\label{inverse_esti}
	\|w_h\|_{\partial K} &\le Ch_K^{-1/2} \|w_h\|_K& \forall w_h\in W_h.
\end{align}

\subsection{Preliminary material}
Next, we give the following several  lemmas, which will  be frequently used in our analysis.
\begin{lemma}\label{id}
	For any real numbers $a, b$ and $c$, we have 
	\begin{align*}
		\frac{1}{2}(3a-4b+c)a
		=\frac{1}{4}\left[ a^2+(2a-b)^2-b^2-(2b-c)^2\right]
		+\frac{1}{4}(a-2b+c)^2.
	\end{align*}
\end{lemma}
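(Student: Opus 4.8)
The identity is a polynomial identity in the three independent variables $a$, $b$, $c$, so the plan is simply to expand both sides and verify that they agree; equivalently, one checks that the difference of the two sides vanishes identically. No analytic input is needed, and the whole argument is a short direct computation.

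First I would expand the left-hand side,
$$\frac12(3a-4b+c)a = \frac12\left(3a^2 - 4ab + ac\right).$$
Next I would expand the four squares on the right-hand side. Writing $S := a^2 + (2a-b)^2 - b^2 - (2b-c)^2$, expansion gives $S = 5a^2 - 4ab - 4b^2 + 4bc - c^2$, while $(a-2b+c)^2 = a^2 + 4b^2 + c^2 - 4ab + 2ac - 4bc$. Adding $\tfrac14 S$ and $\tfrac14(a-2b+c)^2$, the terms in $b^2$, $c^2$, and $bc$ cancel pairwise, leaving $\tfrac14\left(6a^2 - 8ab + 2ac\right) = \tfrac12\left(3a^2 - 4ab + ac\right)$, which matches the left-hand side. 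This completes the verification.

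Although the computation is elementary, it is worth recording the structure that makes the identity useful, since this is what will drive its later application. The right-hand side is arranged as a telescoping difference plus a nonnegative remainder: setting $E(x,y) := \tfrac14\left(x^2 + (2x-y)^2\right)$, the bracketed expression equals $E(a,b) - E(b,c)$, and the final term $\tfrac14(a-2b+c)^2 \ge 0$ plays the role of a numerical dissipation. With the substitution $(a,b,c) = (w^n, w^{n-1}, w^{n-2})$ this is precisely the discrete energy (G-stability) identity for the second order operator $\partial^+_t$, and summing over $n$ will later produce a telescoping cancellation together with a controllable dissipation term in the stability estimate. The only genuine ``obstacle'' is bookkeeping in the expansion; there is no real difficulty, so I would present the proof as a one-line direct computation.
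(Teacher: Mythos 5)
Your direct expansion is correct (both sides reduce to $\tfrac12(3a^2-4ab+ac)$), and it is the natural verification; the paper itself simply states that the proof of this lemma is trivial and omits it. Your additional remark about the telescoping structure $E(a,b)-E(b,c)$ plus the nonnegative dissipation term accurately reflects how the identity is used later in the stability and error estimates.
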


\begin{lemma}\label{beta}
	For $\bm\gamma\in [W^{1,\infty}(\Omega)]^d$ and $ w \in W_h$, we have 
	\begin{align}\label{betaa}
		(\bm \gamma w , \nabla  w)_{\mathcal T_h} = \frac 1 2 \langle  \bm{\gamma}\cdot \bm n  w,  w  \rangle_{\partial\mathcal T_h} - \frac 1 2 (\nabla \cdot\bm \gamma  w, w)_{\mathcal T_h}. 
	\end{align}
\end{lemma}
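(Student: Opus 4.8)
The plan is to reduce this to element-wise integration by parts after rewriting the integrand as a total divergence. The key observation is that for a fixed element $K$, since $w|_K$ is a polynomial (as $w\in W_h$), it is smooth on $K$, and we may use the chain rule to write $2w\,\nabla w = \nabla(w^2)$ pointwise on $K$. Hence the integrand satisfies $\bm\gamma\, w\cdot\nabla w = \tfrac{1}{2}\,\bm\gamma\cdot\nabla(w^2)$ on each $K$.

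First I would expand the left-hand side by definition as a sum over elements, $(\bm\gamma w,\nabla w)_{\mathcal T_h}=\sum_{K\in\mathcal T_h}(\bm\gamma w,\nabla w)_K$, and apply the rewriting above to obtain $(\bm\gamma w,\nabla w)_{\mathcal T_h}=\tfrac{1}{2}\sum_{K\in\mathcal T_h}(\bm\gamma,\nabla(w^2))_K$. Next I would apply the divergence theorem on each element $K$ to the vector field $\bm\gamma\, w^2$, using the product rule $\nabla\cdot(\bm\gamma\, w^2)=(\nabla\cdot\bm\gamma)\,w^2+\bm\gamma\cdot\nabla(w^2)$, which yields
\begin{align*}
(\bm\gamma,\nabla(w^2))_K=\langle \bm\gamma\cdot\bm n\, w,w\rangle_{\partial K}-((\nabla\cdot\bm\gamma)\,w,w)_K.
\end{align*}
The regularity $\bm\gamma\in[W^{1,\infty}(\Omega)]^d$ guarantees that $\nabla\cdot\bm\gamma\in L^\infty(\Omega)$, so the right-hand side volume term is well defined, and the trace term makes sense since $w|_K$ restricts to a polynomial on $\partial K$.

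Finally I would sum over all $K\in\mathcal T_h$ and invoke the definitions $\langle\cdot,\cdot\rangle_{\partial\mathcal T_h}=\sum_K\langle\cdot,\cdot\rangle_{\partial K}$ and $(\cdot,\cdot)_{\mathcal T_h}=\sum_K(\cdot,\cdot)_K$, together with the factor $\tfrac{1}{2}$, to arrive at the claimed identity \eqref{betaa}. I do not anticipate any genuine obstacle here: the only point requiring care is that the integration by parts is performed strictly element-by-element, since $w$ is in general discontinuous across interior faces of $\mathcal E_h^o$, so no cancellation of interface contributions is asserted and the boundary term is simply the full sum $\langle\bm\gamma\cdot\bm n\, w,w\rangle_{\partial\mathcal T_h}$ over element boundaries.
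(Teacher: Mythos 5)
Your proof is correct: the element-wise rewriting $\bm\gamma\, w\cdot\nabla w=\tfrac12\,\bm\gamma\cdot\nabla(w^2)$ followed by the divergence theorem applied to $\bm\gamma\,w^2$ on each $K$, and then summing over $\mathcal T_h$, is exactly the standard argument the paper has in mind (the paper declares the proof of this lemma trivial and omits it). Your attention to the facts that $\nabla\cdot\bm\gamma\in L^\infty(\Omega)$ and that the integration by parts must be done strictly element-by-element, with no cancellation claimed across interior faces, is appropriate and complete.
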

The proofs of \Cref{id} and \Cref{beta} are  trivial and we omit them here.
\begin{lemma}\label{error_time}
	Suppose the function $v:=v(\bm x,t)$ is smooth enough, then the following estimates hold true
	\begin{subequations}
		\begin{align}
			\|\partial^+_t v^n\|^2_{\mathcal{T}_h}
			&\le C\Delta t^{-1} \|\partial_{t} v\|^2_{[L^2(t_{n-2},t_n);L^2(\Omega)]},\label{es-parttial-01}\\
			\Delta t^4\|\partial^+_{tt} v^n\|^2_{\mathcal{T}_h}
			&\le C\Delta t^{3} \|\partial_{tt} v\|^2_{[L^2(t_{n-2},t_n);L^2(\Omega)]},\label{es-parttial-02}\\
			\|\partial_t v^n-\partial^+_t v^n\|^2_{\mathcal{T}_h}
			&\le C \Delta t^3 \|\partial_{ttt} v\|^2_{[L^2(t_{n-2},t_n);L^2(\Omega)]},\label{es-parttial-03}
		\end{align}
	\end{subequations}
	where $	\partial_{tt}^+v^n=(v^n-2v^{n-1}+v^{n-2})/\Delta t^2.$
\end{lemma}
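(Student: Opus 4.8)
The plan is to derive all three bounds from one principle: represent each backward difference as an integral (or iterated integral) of the appropriate time derivative of $v$ via the fundamental theorem of calculus or Taylor's formula with integral remainder, then apply the Cauchy--Schwarz inequality in time and integrate over $\Omega$ (equivalently, sum over $\mathcal T_h$, since the two norms agree). Throughout, $C$ denotes a generic constant independent of $\Delta t$ and $v$.

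For \eqref{es-parttial-01} I would start from the identity
\[
3v^n-4v^{n-1}+v^{n-2}=3(v^n-v^{n-1})-(v^{n-1}-v^{n-2})
=3\int_{t_{n-1}}^{t_n}\partial_t v\,ds-\int_{t_{n-2}}^{t_{n-1}}\partial_t v\,ds,
\]
so that, by the triangle inequality and Cauchy--Schwarz in time,
\[
|\partial^+_t v^n|^2\le \frac{C}{\Delta t^2}\Big[\Delta t\!\int_{t_{n-1}}^{t_n}\!|\partial_t v|^2\,ds+\Delta t\!\int_{t_{n-2}}^{t_{n-1}}\!|\partial_t v|^2\,ds\Big]
=\frac{C}{\Delta t}\int_{t_{n-2}}^{t_n}|\partial_t v|^2\,ds,
\]
and integrating over $\Omega$ gives the claimed $\Delta t^{-1}$ scaling. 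For \eqref{es-parttial-02} I would difference the two first-order integrals and change variables to obtain the iterated representation
\[
v^n-2v^{n-1}+v^{n-2}=\int_{t_{n-1}}^{t_n}\!\!\int_{s-\Delta t}^{s}\partial_{tt}v(\tau)\,d\tau\,ds .
\]
Since $[s-\Delta t,s]\subset[t_{n-2},t_n]$ for $s\in[t_{n-1},t_n]$, enlarging the inner integral to the full window and applying Cauchy--Schwarz yields $|v^n-2v^{n-1}+v^{n-2}|^2\le C\Delta t^3\int_{t_{n-2}}^{t_n}|\partial_{tt}v|^2\,d\tau$. Because $\partial^+_{tt}v^n=\Delta t^{-2}(v^n-2v^{n-1}+v^{n-2})$, this is exactly $\Delta t^4|\partial^+_{tt}v^n|^2\le C\Delta t^3\int_{t_{n-2}}^{t_n}|\partial_{tt}v|^2$, and integrating over $\Omega$ gives \eqref{es-parttial-02}.

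The delicate estimate is the consistency bound \eqref{es-parttial-03}, and this is where I expect the main obstacle to lie. Here I would Taylor expand $v^{n-1}$ and $v^{n-2}$ about $t_n$ to second order with integral remainder. The key point is that the BDF2 operator $\partial^+_t$ is \emph{exact on quadratics} (a direct check gives $\partial^+_t v^n=\partial_t v^n$ for $v\equiv 1,\,t,\,t^2$), so the constant, linear, and quadratic Taylor terms cancel identically and leave $\partial^+_t v^n-\partial_t v^n=\frac{1}{2\Delta t}(-4R_1+R_2)$, where $R_1,R_2$ are the third-order remainders for $v^{n-1},v^{n-2}$, i.e. integrals of $\partial_{ttt}v$ against kernels of size $O(\Delta t^2)$ supported in $[t_{n-2},t_n]$. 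The bookkeeping that makes this cancellation exact and correctly tracks the powers of $\Delta t$ is the only nontrivial part; once it is in place the triangle inequality gives $|\partial^+_t v^n-\partial_t v^n|\le C\Delta t\int_{t_{n-2}}^{t_n}|\partial_{ttt}v|\,ds$, and a final Cauchy--Schwarz yields
\[
|\partial_t v^n-\partial^+_t v^n|^2\le C\Delta t^3\int_{t_{n-2}}^{t_n}|\partial_{ttt}v|^2\,ds,
\]
whence integration over $\Omega$ completes the proof. I note that the $L^2$-in-time norm on the right-hand side absorbs one factor of $\Delta t$ relative to the pointwise $O(\Delta t^2)$ truncation error of BDF2, which is why the exponent that appears is $3$ rather than $4$.
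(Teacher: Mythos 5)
Your proof is correct and is essentially the standard argument: the paper itself omits the proof, citing \cite{Gunzburger_Jiang_Wang_Flow_CMAM_2017} for \eqref{es-parttial-03} and declaring \eqref{es-parttial-01}--\eqref{es-parttial-02} analogous, and the cited proof is exactly your Taylor-expansion-with-integral-remainder plus Cauchy--Schwarz-in-time argument (with the exactness of $\partial_t^+$ on quadratics driving the cancellation in \eqref{es-parttial-03}). All three of your derivations, including the powers of $\Delta t$, check out.
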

The proof of \eqref{es-parttial-03} can be found in \cite{Gunzburger_Jiang_Wang_Flow_CMAM_2017}, the  proofs for \eqref{es-parttial-01}-\eqref{es-parttial-02} are very similar to the proof of \eqref{es-parttial-03} and hence we omit them. 

The following lemma is very crucial for our analysis.
\begin{lemma}\label{VV}
	For 
	$\bm \gamma\in [W^{1,\infty}(\Omega)]^d$,
	$(w,\mu)\in W_h\times M_h(0)$,
	$\nabla\cdot\bm \gamma\le 0$
	and $h$ small enough,  we have 
	\begin{align}\label{V1}
		\begin{split}
			\hspace{1em}&\hspace{-1em}\|h_K^{-1/2}(P_M w -\mu )\|^2_{\partial\mathcal{T}_h}
			-(\nabla\cdot\bm\gamma  w ,w )_{\mathcal{T}_h}-(\bm\gamma  w ,\nabla w )_{\mathcal{T}_h}
			+\langle{\bm \gamma} \cdot\bm n \mu , w  \rangle_{\partial\mathcal{T}_h}\\
			&\qquad\ge 
			\frac{1}{2}\|h_K^{-1/2}(P_M w -\mu )\|^2_{\partial\mathcal{T}_h}-Ch\|\nabla w \|^2_{\mathcal{T}_h}.
		\end{split}
	\end{align}
\end{lemma}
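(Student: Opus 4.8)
The plan is to reduce the claimed inequality to a purely boundary estimate and then absorb the leftover into the stabilization term using the smallness of $h$. First I would rewrite the two convection terms: applying \Cref{beta} to $w\in W_h$ gives
\[
-(\bm\gamma w,\nabla w)_{\mathcal T_h}=-\tfrac12\langle\bm\gamma\cdot\bm n\,w,w\rangle_{\partial\mathcal T_h}+\tfrac12(\nabla\cdot\bm\gamma\,w,w)_{\mathcal T_h},
\]
so that $-(\nabla\cdot\bm\gamma\,w,w)_{\mathcal T_h}-(\bm\gamma w,\nabla w)_{\mathcal T_h}=-\tfrac12(\nabla\cdot\bm\gamma\,w,w)_{\mathcal T_h}-\tfrac12\langle\bm\gamma\cdot\bm n\,w,w\rangle_{\partial\mathcal T_h}$. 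Since $\nabla\cdot\bm\gamma\le0$, the remaining volume term $-\tfrac12(\nabla\cdot\bm\gamma\,w,w)_{\mathcal T_h}$ is nonnegative and may be dropped. Hence the left-hand side of \eqref{V1} minus $\tfrac12\|h_K^{-1/2}(P_M w-\mu)\|^2_{\partial\mathcal T_h}$ is bounded below by $\tfrac12\|h_K^{-1/2}(P_M w-\mu)\|^2_{\partial\mathcal T_h}+B$, where $B:=-\tfrac12\langle\bm\gamma\cdot\bm n\,w,w\rangle_{\partial\mathcal T_h}+\langle\bm\gamma\cdot\bm n\,\mu,w\rangle_{\partial\mathcal T_h}$; it therefore suffices to show $\tfrac12\|h_K^{-1/2}(P_M w-\mu)\|^2_{\partial\mathcal T_h}+B\ge-Ch\|\nabla w\|^2_{\mathcal T_h}$.

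Next I would complete the square in $B$. Writing $B=-\tfrac12\langle\bm\gamma\cdot\bm n(w-\mu),(w-\mu)\rangle_{\partial\mathcal T_h}+\tfrac12\langle\bm\gamma\cdot\bm n\,\mu,\mu\rangle_{\partial\mathcal T_h}$ and observing that $\mu\in M_h(0)$ is single valued on interior faces and vanishes on $\mathcal E_h^\partial$, while $\bm\gamma$ is continuous so the two outward normals on a shared interior face are opposite, the term $\langle\bm\gamma\cdot\bm n\,\mu,\mu\rangle_{\partial\mathcal T_h}$ cancels face by face and equals zero. Thus $B=-\tfrac12\langle\bm\gamma\cdot\bm n(w-\mu),(w-\mu)\rangle_{\partial\mathcal T_h}$.

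Finally I would split $w-\mu=\delta+\eta$ with $\delta:=w-P_M w$ and $\eta:=P_M w-\mu$, expand the quadratic, and estimate the three resulting pieces. The crucial ingredient is the projection--trace bound $\|w-P_M w\|_{\partial K}^2\le Ch_K\|\nabla w\|_K^2$, which follows because $P_M$ reproduces constants (so $\|w-P_M w\|_{\partial K}\le\|w-\bar w_K\|_{\partial K}$) together with the standard trace inequality and the Poincar\'e inequality on $K$. With $|\bm\gamma\cdot\bm n|\le\|\bm\gamma\|_{L^\infty}$, the pure-$\delta$ term is controlled by $Ch\|\nabla w\|^2_{\mathcal T_h}$; the pure-$\eta$ term satisfies $|\langle\bm\gamma\cdot\bm n\,\eta,\eta\rangle_{\partial\mathcal T_h}|\le\|\bm\gamma\|_{L^\infty}h\,\|h_K^{-1/2}\eta\|^2_{\partial\mathcal T_h}$ because $\|\eta\|_{\partial K}^2=h_K\|h_K^{-1/2}\eta\|_{\partial K}^2$; and the cross term is handled by Young's inequality, producing a $Ch^{3/2}\|\nabla w\|^2_{\mathcal T_h}$ contribution and a further $Ch^{1/2}\|h_K^{-1/2}\eta\|^2_{\partial\mathcal T_h}$ contribution. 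Collecting these, all the $\|h_K^{-1/2}\eta\|^2_{\partial\mathcal T_h}$ terms carry a factor $O(h^{1/2})$, so for $h$ small enough they are absorbed by the $\tfrac12\|h_K^{-1/2}\eta\|^2_{\partial\mathcal T_h}$ retained from the stabilization, leaving exactly the desired $-Ch\|\nabla w\|^2_{\mathcal T_h}$.

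The main obstacle is the treatment of the pure-$\eta$ and cross boundary terms: since $-\tfrac12\langle\bm\gamma\cdot\bm n\,\eta,\eta\rangle_{\partial\mathcal T_h}$ is a priori of the same order as the stabilization $\tfrac12\|h_K^{-1/2}\eta\|^2_{\partial\mathcal T_h}$, its sign is not controlled and it can only be dominated after one exploits the extra power of $h_K$ relating $\|\eta\|_{\partial K}$ to $\|h_K^{-1/2}\eta\|_{\partial K}$; this is precisely where the hypothesis that $h$ is small enough is consumed. Establishing the projection--trace estimate $\|w-P_M w\|_{\partial K}^2\le Ch_K\|\nabla w\|_K^2$ on general shape-regular polyhedra is the other delicate point.
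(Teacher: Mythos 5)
Your proposal is correct and follows essentially the same route as the paper: rewrite the convection terms via \Cref{beta} and $\langle\bm\gamma\cdot\bm n\,\mu,\mu\rangle_{\partial\mathcal T_h}=0$, drop the nonpositive volume term, split $w-\mu=(w-P_Mw)+(P_Mw-\mu)$, bound the $w-P_Mw$ contributions by $Ch^{1/2}\|\nabla w\|_{\mathcal T_h}$ per element, and absorb the remaining $P_Mw-\mu$ boundary terms into the stabilization for $h$ small. The only cosmetic difference is that you absorb the pure $\langle\bm\gamma\cdot\bm n\,\eta,\eta\rangle_{\partial\mathcal T_h}$ piece through an $O(h)$ bound relative to $\|h_K^{-1/2}\eta\|^2_{\partial\mathcal T_h}$, whereas the paper uses the equivalent pointwise condition $\tfrac14 h_K^{-1}-\tfrac12\bm\gamma\cdot\bm n\ge0$.
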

\begin{proof}
	Use  $\langle
	\bm\gamma \cdot\bm n \mu ,
	\mu  \rangle_{\partial\mathcal{T}_h}=0$,  $\nabla\cdot\bm \gamma  \le 0$ and integration by parts,  we have 
	\begin{align*}
		\hspace{1em}&\hspace{-1em}
		-(\nabla\cdot\bm\gamma  w ,w )_{\mathcal{T}_h}
		-(\bm\gamma  w ,\nabla w )_{\mathcal{T}_h}
		+\langle{\bm \gamma} \cdot\bm n \mu ,w  \rangle_{\partial\mathcal{T}_h}\\
		&=-\frac{1}{2} \langle
		\bm\gamma \cdot\bm n(w -\mu ), w -\mu  \rangle_{\partial\mathcal{T}_h}-\frac{1}{2}
		(\nabla\cdot\bm\gamma  w ,w )_{\mathcal{T}_h}&\textup{by} \ \eqref{betaa}\\
		&= -\frac{1}{2}\langle\bm\gamma \cdot\bm n(w -P_M w ),
		w - P_M w  \rangle_{\partial\mathcal{T}_h}-\langle \bm\gamma \cdot\bm n(P_M w -\mu ),
		(w - P_M w ) \rangle_{\partial\mathcal{T}_h}
		-\frac{1}{2} (\nabla\cdot\bm\gamma  w ,w )_{\mathcal{T}_h}\nonumber\\
		&\ge -C(h\|\nabla w \|^2_{\mathcal{T}_h}
		+h^{1/2}\|\nabla w \|_{\mathcal{T}_h}
		\|P_M w -\mu \|_{\partial\mathcal{T}_h})&\textup{by} \ \eqref{inverse_esti}\nonumber\\
		&\ge  -Ch\|\nabla w \|^2_{\mathcal{T}_h} -\frac{1}{4} \|h_K^{-1/2}(P_M w -\mu )\|_{\partial\mathcal{T}_h}^2.
	\end{align*}
	The mesh size $h$ small enough and 
	$\bm \gamma\in [W^{1,\infty}(\Omega)]^d$
	imply $\frac{1}{4}h_K^{-1}-\frac{1}{2}\bm{\gamma} \cdot\bm n\ge 0$, therefore, 
	\begin{align*}
		\hspace{1em}&\hspace{-1em}\|h_K^{-1/2}(P_M w -\mu )\|^2_{\partial\mathcal{T}_h}-(\nabla\cdot\bm\gamma  w ,w )_{\mathcal{T}_h}
		-(\bm\gamma  w ,\nabla w )_{\mathcal{T}_h}
		+\langle{\bm \gamma} \cdot\bm n\mu ,w  \rangle_{\partial\mathcal{T}_h}\nonumber\\
		&\ge 
		\frac{1}{2}\|h_K^{-1/2}(P_M w -\mu )\|^2_{\partial\mathcal{T}_h}-Ch\|\nabla w \|^2_{\mathcal{T}_h}.
	\end{align*}
\end{proof}

\begin{lemma}\label{ub}
	Let $(\bm q_{jh}^n, u_{jh}^n, \widehat{u}_{jh}^n)$ be the solution of \eqref{full_discretion_ensemble}, then we have the following bound
	\begin{align*}
		\|\nabla u_{jh}^n\|_{\mathcal{T}_h}\le
		C\left(
		\|\sqrt{\overline{c}^n}\bm q_{jh}^n\|_{\mathcal{T}_h}
		+
		\|\sqrt{\overline{c}^{n-1}}\bm q_{jh}^{n-1}\|_{\mathcal{T}_h}
		+	\|\sqrt{\overline{c}^{n-2}}\bm q_{jh}^{n-2}\|_{\mathcal{T}_h}+\|h_K^{-1/2}(P_M u_{jh}^n-\widehat{u}_{jh}^n)\|_{\partial\mathcal{T}_h}\right).
	\end{align*}
\end{lemma}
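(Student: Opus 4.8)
The plan is to test the first equation \eqref{full_discretion_ensemble_a} of the ensemble HDG system against a carefully chosen $\bm r_j$ so that the left-hand side produces $\|\nabla u_{jh}^n\|_{\mathcal{T}_h}^2$. The crucial structural observation is that, because $u_{jh}^n\in W_h$ consists of piecewise polynomials of degree $k+1$, its elementwise gradient $\nabla u_{jh}^n$ is piecewise of degree $k$ and hence lies in $\bm V_h$. I would therefore take $\bm r_j=\nabla u_{jh}^n$, which is admissible, and integrate the term $-(u_{jh}^n,\nabla\cdot\bm r_j)_{\mathcal{T}_h}$ by parts element by element to obtain $(\nabla u_{jh}^n,\bm r_j)_{\mathcal{T}_h}-\langle u_{jh}^n,\bm r_j\cdot\bm n\rangle_{\partial\mathcal{T}_h}$. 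Combining the resulting $\|\nabla u_{jh}^n\|_{\mathcal{T}_h}^2$ with the numerical-trace term $\langle\widehat u_{jh}^n,\bm r_j\cdot\bm n\rangle_{\partial\mathcal{T}_h}$ already present in \eqref{full_discretion_ensemble_a} yields an identity of the form
\[
\|\nabla u_{jh}^n\|_{\mathcal{T}_h}^2 = -(\overline c^n\bm q_{jh}^n,\nabla u_{jh}^n)_{\mathcal{T}_h} + \langle u_{jh}^n-\widehat u_{jh}^n,\nabla u_{jh}^n\cdot\bm n\rangle_{\partial\mathcal{T}_h} + ((\overline c^n-c_j^n)(2\bm q_{jh}^{n-1}-\bm q_{jh}^{n-2}),\nabla u_{jh}^n)_{\mathcal{T}_h}.
\]

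The main obstacle, and the key to the whole argument, is the boundary term $\langle u_{jh}^n-\widehat u_{jh}^n,\nabla u_{jh}^n\cdot\bm n\rangle_{\partial\mathcal{T}_h}$, because the raw jump $u_{jh}^n-\widehat u_{jh}^n$ does not appear anywhere in the stabilization of this method; only the projected jump $P_M u_{jh}^n-\widehat u_{jh}^n$ is controlled. The resolution exploits the degree mismatch: on each face $e$ we have $\nabla u_{jh}^n\cdot\bm n\in\mathcal P^k(e)$, so by the defining property \eqref{L2_edge} of $P_M$ I can replace $u_{jh}^n$ by $P_M u_{jh}^n$ in this pairing without changing its value, turning the term into $\langle P_M u_{jh}^n-\widehat u_{jh}^n,\nabla u_{jh}^n\cdot\bm n\rangle_{\partial\mathcal{T}_h}$. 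This is precisely where the design choice of using degree $k+1$ for $u$ and degree $k$ for the numerical trace pays off.

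The remaining steps are routine estimates. For the boundary term I would insert the weights $h_K^{\pm1/2}$, apply Cauchy--Schwarz over $\partial\mathcal{T}_h$, and use the inverse inequality \eqref{inverse_esti} on $\nabla u_{jh}^n$ to bound it by $C\|h_K^{-1/2}(P_M u_{jh}^n-\widehat u_{jh}^n)\|_{\partial\mathcal{T}_h}\|\nabla u_{jh}^n\|_{\mathcal{T}_h}$. For the first volume term, Cauchy--Schwarz together with the uniform upper bound on $\overline c^n$ (coming from $c_j\in C(0,T;L^\infty(\Omega))$) gives $C\|\sqrt{\overline c^n}\bm q_{jh}^n\|_{\mathcal{T}_h}\|\nabla u_{jh}^n\|_{\mathcal{T}_h}$. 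For the last term I would split $2\bm q_{jh}^{n-1}-\bm q_{jh}^{n-2}$ and use the coefficient condition \eqref{cond-c}, namely $|\overline c^n-c_j^n|<\tfrac{1}{3}\min\{\overline c^{n-1},\overline c^{n-2}\}$ pointwise, to absorb the factor $|\overline c^n-c_j^n|$ into the square roots $\sqrt{\overline c^{n-1}},\sqrt{\overline c^{n-2}}$ (again using the uniform upper bound on $\overline c$), producing $C(\|\sqrt{\overline c^{n-1}}\bm q_{jh}^{n-1}\|_{\mathcal{T}_h}+\|\sqrt{\overline c^{n-2}}\bm q_{jh}^{n-2}\|_{\mathcal{T}_h})\|\nabla u_{jh}^n\|_{\mathcal{T}_h}$. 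Collecting these three bounds and dividing through by $\|\nabla u_{jh}^n\|_{\mathcal{T}_h}$ (the case $\nabla u_{jh}^n=0$ being trivial) yields the claimed estimate.
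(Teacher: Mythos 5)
Your proposal is correct and follows essentially the same route as the paper: take $\bm r_j=\nabla u_{jh}^n\in\bm V_h$ in \eqref{full_discretion_ensemble_a}, integrate by parts, use the $L^2$-orthogonality \eqref{L2_edge} to replace $u_{jh}^n$ by $P_M u_{jh}^n$ in the face term (since $\nabla u_{jh}^n\cdot\bm n\in\mathcal P^k(e)$), and finish with Cauchy--Schwarz, the inverse inequality \eqref{inverse_esti}, and the coefficient condition \eqref{cond-c}. You have in fact spelled out the two details the paper leaves implicit (the projection swap on the boundary term and the absorption of $|\overline c^n-c_j^n|$ into the weighted norms), and both are handled correctly.
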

\begin{proof} 
	We take $\bm r_j=\nabla u_{jh}^n$ in \Cref{full_discretion_ensemble_a} and use 
	integration by parts to get
	\begin{align*}
		\|\nabla u_{jh}^n\|^2_{\mathcal{T}_h}&= -(\overline{c}^n\bm q^n_{jh},\nabla u_{jh}^n)_{\mathcal{T}_h}
		+\langle u_{jh}^n-\widehat{u}_{jh}^n,\nabla u_{jh}^n\cdot\bm n \rangle_{\partial\mathcal{T}_h}+((\overline c^n-c_j^n)(2\bm{q}^{n-1}_{jh}-\bm{q}^{n-2}_{jh}),\nabla u_{jh}^n)_{\mathcal{T}_h}\\
		&= -(\overline{c}^n\bm q^n_{jh},\nabla u_{jh}^n)_{\mathcal{T}_h}
		+\langle P_M u_{jh}^n-\widehat{u}_{jh}^n,\nabla u_{jh}^n\cdot\bm n \rangle_{\partial\mathcal{T}_h}
		+	( ( \overline c^n-c_j^n)(2\bm{q}^{n-1}_{jh}-\bm{q}^{n-2}_{jh}),\nabla u_{jh}^n)_{\mathcal{T}_h},
	\end{align*}
	then the desired result  is followed 
	by the Cauchy-Schwarz inequality and the local  inverse inequality  \eqref{inverse_esti}.
\end{proof}

\begin{lemma}[Discrete Poincar\'e-Friedrichs inequality]\label{poincare}
	For all  $(w,\mu) \in W_h\times M_h(0)$, we have
	\begin{equation*}
		\|w \|_{\mathcal T_h} \le   C  \|\nabla w \|_{\mathcal T_h}  + C \|h_K^{-1/2} (w  - \mu )\|_{\partial \mathcal T_h}.
	\end{equation*}
\end{lemma}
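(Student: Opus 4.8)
The plan is to use a duality (Aubin--Nitsche) argument. First I would introduce the auxiliary Dirichlet problem: let $\phi$ solve $-\Delta\phi = w$ in $\Omega$ with $\phi = 0$ on $\partial\Omega$. Since $\Omega$ is a Lipschitz polyhedral domain, the weak solution satisfies $\|\nabla\phi\| \le C\|w\|$, and the elliptic regularity estimate $\|\phi\|_{1+s} \le C\|w\|$ holds for some $s\in(1/2,1]$ (with $s=1$ when $\Omega$ is convex). Testing the equation against $w$ and integrating by parts element-by-element gives
\begin{align*}
\|w\|^2 = (w,-\Delta\phi)_{\mathcal{T}_h} = (\nabla w,\nabla\phi)_{\mathcal{T}_h} - \langle w, \nabla\phi\cdot\bm n\rangle_{\partial\mathcal{T}_h}.
\end{align*}

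Next I would exploit the fact that $\mu\in M_h(0)$ is single-valued on interior faces and vanishes on $\partial\Omega$, while $\nabla\phi$ (being a global $H^{1+s}$ function with $s>1/2$) has an $L^2$ normal trace that is continuous across interior faces. Hence $\langle \mu, \nabla\phi\cdot\bm n\rangle_{\partial\mathcal{T}_h} = 0$ and I may insert $\mu$ at no cost:
\begin{align*}
\|w\|^2 = (\nabla w,\nabla\phi)_{\mathcal{T}_h} - \langle w-\mu, \nabla\phi\cdot\bm n\rangle_{\partial\mathcal{T}_h}.
\end{align*}
The volume term is bounded by Cauchy--Schwarz and the energy estimate, $(\nabla w,\nabla\phi)_{\mathcal{T}_h}\le \|\nabla w\|_{\mathcal{T}_h}\|\nabla\phi\| \le C\|\nabla w\|_{\mathcal{T}_h}\|w\|$. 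For the face term I would factor powers of $h_K$,
\begin{align*}
\langle w-\mu,\nabla\phi\cdot\bm n\rangle_{\partial\mathcal{T}_h} \le \|h_K^{-1/2}(w-\mu)\|_{\partial\mathcal{T}_h}\,\|h_K^{1/2}\nabla\phi\|_{\partial\mathcal{T}_h},
\end{align*}
and control the last factor by the scaled trace inequality $h_K\|\nabla\phi\|_{\partial K}^2 \le C(|\phi|_{1,K}^2 + h_K^2|\phi|_{2,K}^2)\le C\|\phi\|_{2,K}^2$; summing over $K$ yields $\|h_K^{1/2}\nabla\phi\|_{\partial\mathcal{T}_h}\le C\|\phi\|_2\le C\|w\|$. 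Combining the two bounds and dividing by $\|w\|$ (the statement is trivial when $w=0$) gives the claim.

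The main obstacle is the regularity of $\phi$ on a merely Lipschitz polyhedral domain: the scaled trace inequality as written presumes $\nabla\phi\in H^1(K)$, i.e.\ full $H^2$ regularity, which is only guaranteed when $\Omega$ is convex. On a general Lipschitz polyhedron I would instead use the fractional trace inequality $h_K\|\nabla\phi\|_{\partial K}^2 \le C\|\nabla\phi\|_{s,K}^2$, which is valid precisely because $s>1/2$, and sum to obtain $\|h_K^{1/2}\nabla\phi\|_{\partial\mathcal{T}_h}\le C\|\phi\|_{1+s}\le C\|w\|$. An alternative that bypasses elliptic regularity altogether is to invoke the broken Poincar\'e--Friedrichs inequality for piecewise-$H^1$ functions, after observing that $\|h_K^{-1/2}(w-\mu)\|_{\partial\mathcal{T}_h}$ controls the $L^2$ jumps of $w$ across $\mathcal{E}_h^o$ together with its boundary trace on $\mathcal{E}_h^\partial$.
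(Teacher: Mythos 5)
Your argument is correct and essentially complete. Note, however, that the paper does not prove this lemma at all: it simply cites \cite[Lemma 5]{Chen_Monk_Peter1}, so your duality argument is a genuinely self-contained alternative rather than a reproduction of the paper's reasoning. The key steps all check out: elementwise integration by parts of $(w,-\Delta\phi)_{\mathcal T_h}$, the cancellation $\langle \mu,\nabla\phi\cdot\bm n\rangle_{\partial\mathcal T_h}=0$ (legitimate because $\mu$ is single-valued on interior faces, vanishes on $\mathcal E_h^\partial$, and $\nabla\phi\cdot\bm n$ has a single-valued $L^2$ face trace once $\phi\in H^{1+s}$ with $s>1/2$), the energy bound $\|\nabla\phi\|\le C\|w\|$, and the scaled (fractional) trace inequality giving $\|h_K^{1/2}\nabla\phi\|_{\partial\mathcal T_h}\le C\|\phi\|_{1+s}\le C\|w\|$. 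You are also right to flag the regularity issue: on a general Lipschitz polyhedron one only has $H^{1+s}$ with some $s\in(1/2,1]$, and your fractional trace inequality handles exactly that; for summing the broken Slobodeckij seminorms you implicitly use $\sum_K|\nabla\phi|_{s,K}^2\le|\nabla\phi|_{s,\Omega}^2$, which holds for $s\in(0,1)$. The payoff of your route is a short, self-contained proof whose only external ingredient is elliptic regularity on polyhedra; the payoff of the paper's route is that the cited lemma is stated directly for the shape-regular polygonal/polyhedral meshes of \cite{Chen_Monk_Peter1}, so no regularity of the dual problem needs to be invoked in the present paper. Your closing remark about reducing to Brenner's broken Poincar\'e--Friedrichs inequality via $\|[w]\|_e\le\|w^+-\mu\|_e+\|w^--\mu\|_e$ is a sound second alternative, with the caveat that the standard statements of that inequality are for simplicial meshes and would need the polyhedral-mesh version here.
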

The proof of \Cref{poincare} is found in \cite[Lemma 5]{Chen_Monk_Peter1}.

\begin{lemma} 
	For all  $\bm\gamma \in [W^{1,\infty}({\Omega})]^d $ and  $(v ,w ,\widehat v ,\widehat w )\in W_h\times W_h\times M_h(0)\times M_h(0)$, we have 
	\begin{align}\label{frequ}
		\begin{split}
			\hspace{1em}&\hspace{-1em}-(\nabla\cdot\bm\gamma  w , v )_{\mathcal{T}_h}-(\bm\gamma w ,\nabla v )_{\mathcal{T}_h}
			+\langle \bm\gamma \cdot\bm n\widehat w , {v} \rangle_{\partial\mathcal{T}_h}\\
			&\le C\left(\|w \|_{\mathcal{T}_h}^2
			+\|v \|_{\mathcal{T}_h}^2+\|h_K^{1/2}(P_M v -\widehat{v} )\|_{\partial\mathcal{T}_h}^2\right)+(\nabla\cdot[\bm{\Pi}_0\bm\gamma  w ],v )_{\mathcal{T}_h}\\
			&\quad
			-\langle \bm{\Pi}_0\bm\gamma  \cdot\bm n w ,\widehat{v} \rangle_{\partial\mathcal{T}_h}+\langle \bm\gamma \cdot\bm n(\widehat w -w ), {v} -\widehat{v} \rangle_{\partial\mathcal{T}_h}.
		\end{split}
	\end{align}
\end{lemma}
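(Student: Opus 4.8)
The plan is to collapse the left-hand side to a single volume term plus an interface term by integrating by parts, and then to split $\bm\gamma=\bm\Pi_0\bm\gamma+(\bm\gamma-\bm\Pi_0\bm\gamma)$ and absorb the $O(h)$ remainder. First I would rewrite $-(\bm\gamma w,\nabla v)_{\mathcal{T}_h}$ element by element via $\nabla\cdot(\bm\gamma w)=(\nabla\cdot\bm\gamma)w+\bm\gamma\cdot\nabla w$ and the divergence theorem, which gives
\begin{align*}
-(\bm\gamma w,\nabla v)_{\mathcal{T}_h}=((\nabla\cdot\bm\gamma)w,v)_{\mathcal{T}_h}+(\bm\gamma\cdot\nabla w,v)_{\mathcal{T}_h}-\langle\bm\gamma\cdot\bm n\,w,v\rangle_{\partial\mathcal{T}_h}.
\end{align*}
Substituting into the left-hand side, the two terms $\mp((\nabla\cdot\bm\gamma)w,v)_{\mathcal{T}_h}$ cancel and the boundary pieces combine, leaving the compact form $(\bm\gamma\cdot\nabla w,v)_{\mathcal{T}_h}+\langle\bm\gamma\cdot\bm n(\widehat w-w),v\rangle_{\partial\mathcal{T}_h}$.

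Next I would match the structured terms on the right-hand side. Since $\bm\Pi_0\bm\gamma$ is constant on each element, $\nabla\cdot\bm\Pi_0\bm\gamma=0$ elementwise and hence $(\bm\Pi_0\bm\gamma\cdot\nabla w,v)_{\mathcal{T}_h}=(\nabla\cdot[\bm\Pi_0\bm\gamma\,w],v)_{\mathcal{T}_h}$, which is precisely the first structured term. After subtracting the three structured terms and the last interface term from the reduced left-hand side, the boundary contributions reorganize into $\langle\bm\gamma\cdot\bm n\,\widehat w,\widehat v\rangle_{\partial\mathcal{T}_h}-\langle(\bm\gamma-\bm\Pi_0\bm\gamma)\cdot\bm n\,w,\widehat v\rangle_{\partial\mathcal{T}_h}$. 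The crucial observation is that $\langle\bm\gamma\cdot\bm n\,\widehat w,\widehat v\rangle_{\partial\mathcal{T}_h}=0$: because $\bm\gamma$ is continuous across interior faces while $\widehat w,\widehat v\in M_h(0)$ are single valued there and vanish on $\partial\Omega$, the two face contributions cancel thanks to $\bm n^+=-\bm n^-$.

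It then remains to estimate $((\bm\gamma-\bm\Pi_0\bm\gamma)\cdot\nabla w,v)_{\mathcal{T}_h}-\langle(\bm\gamma-\bm\Pi_0\bm\gamma)\cdot\bm n\,w,\widehat v\rangle_{\partial\mathcal{T}_h}$ by the three norms on the right, using the standard approximation bound $\|\bm\gamma-\bm\Pi_0\bm\gamma\|_{0,\infty,K}\le Ch_K|\bm\gamma|_{1,\infty,K}$. For the volume term, Cauchy--Schwarz together with the standard gradient inverse inequality $\|\nabla w\|_K\le Ch_K^{-1}\|w\|_K$ converts the $O(h)$ factor into a bound $C\|w\|_{\mathcal{T}_h}\|v\|_{\mathcal{T}_h}$. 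For the boundary term I would write $\widehat v=P_M v-(P_M v-\widehat v)$: the $P_M v$ piece is controlled by the trace inverse inequality \eqref{inverse_esti} applied to $w$ and $v$ (using $\|P_M v\|_{\partial K}\le\|v\|_{\partial K}$), whereas for the $P_M v-\widehat v$ piece I insert the weights $h_K^{-1/2}\cdot h_K^{1/2}$ so that the $O(h)$ smallness of $\bm\gamma-\bm\Pi_0\bm\gamma$ absorbs $h_K^{-1/2}$ and produces $\|h_K^{1/2}(P_M v-\widehat v)\|_{\partial\mathcal{T}_h}$. Young's inequality then yields the asserted estimate.

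The step I expect to be the main obstacle is the boundary bookkeeping: one must track the several $\langle\cdot,\cdot\rangle_{\partial\mathcal{T}_h}$ terms with care, recognize the exact cancellation $\langle\bm\gamma\cdot\bm n\,\widehat w,\widehat v\rangle_{\partial\mathcal{T}_h}=0$, and then balance the powers of $h_K$ so that the $O(h)$ approximation of $\bm\gamma$ by $\bm\Pi_0\bm\gamma$ exactly compensates the $O(h^{-1})$ growth of the inverse inequalities, leaving the constant $C$ independent of $h$.
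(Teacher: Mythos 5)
Your proposal is correct and follows essentially the same route as the paper: reduce the left-hand side by elementwise integration by parts, exploit $\langle\bm\gamma\cdot\bm n\,\widehat w,\widehat v\rangle_{\partial\mathcal{T}_h}=0$, split $\bm\gamma=\bm\Pi_0\bm\gamma+(\bm\gamma-\bm\Pi_0\bm\gamma)$ to isolate the structured terms, and absorb the $O(h_K)$ remainder against the $O(h_K^{-1/2})$ and $O(h_K^{-1})$ inverse inequalities. The only difference is cosmetic bookkeeping of the remainder (the paper integrates the $(\bm\gamma-\bm\Pi_0\bm\gamma)$ terms by parts once more and puts the inverse inequality on $\nabla v$ rather than $\nabla w$), which does not change the argument.
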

\begin{proof} 
	We note that $\langle \bm \gamma  \cdot\bm n\widehat{w},  \widehat{v}  \rangle_{\partial\mathcal{T}_h}=0$, then 
	\begin{align*}
		\hspace{1em}&\hspace{-1em}-(\nabla\cdot\bm\gamma  w , v )_{\mathcal{T}_h}-(\bm\gamma w ,\nabla v )_{\mathcal{T}_h}
		+\langle \bm\gamma \cdot\bm n\widehat w , {v} \rangle_{\partial\mathcal{T}_h}\\
		&=(\bm\gamma \cdot\nabla w , v )_{\mathcal{T}_h}
		+\langle \bm\gamma \cdot\bm n(\widehat w -w ),  {v} \rangle_{\partial\mathcal{T}_h}&\textup{by} \ \eqref{betaa}\\
		&=(\bm\gamma \cdot \nabla w ,v )_{\mathcal{T}_h}
		-\langle \bm\gamma \cdot\bm nw , \widehat{v} \rangle_{\partial\mathcal{T}_h}
		+\langle \bm\gamma \cdot\bm n(\widehat w -w ),  {v} -\widehat{v} \rangle_{\partial\mathcal{T}_h}\\
		&=((\bm\gamma -\bm{\Pi}_0\bm\gamma )\cdot \nabla w ,v )_{\mathcal{T}_h}
		-\langle (\bm\gamma -\bm{\Pi}_0\bm\gamma )\cdot\bm nw ,  \widehat{v} \rangle_{\partial\mathcal{T}_h}\\
		&\quad +(\bm{\Pi}_0\bm\gamma \cdot \nabla w ,v )_{\mathcal{T}_h}
		-\langle (\bm{\Pi}_0\bm\gamma \cdot\bm nw , \widehat{v} \rangle_{\partial\mathcal{T}_h}+\langle \bm\gamma \cdot\bm n(\widehat w -w ), ({v} -\widehat{v} )\rangle_{\partial\mathcal{T}_h}.
	\end{align*}
	We use integration by parts  to get
	\begin{align*}
		\hspace{1em}&\hspace{-1em}-(\nabla\cdot\bm\gamma  w , v )_{\mathcal{T}_h}-(\bm\gamma w ,\nabla v )_{\mathcal{T}_h}
		+\langle \bm\gamma \cdot\bm n\widehat w , {v} \rangle_{\partial\mathcal{T}_h}\\
		&= - (\nabla\cdot(\bm\gamma  -\bm{\Pi}_0\bm \gamma ) w , v )_{\mathcal{T}_h}- ((\bm\gamma -\bm{\Pi}_0\bm\gamma )\cdot \nabla v ,w )_{\mathcal{T}_h}+\langle (\bm\gamma -\bm{\Pi}_0\bm\gamma )\cdot\bm nw ,  v -\widehat{v} \rangle_{\partial\mathcal{T}_h}\\
		&\quad+(\bm{\Pi}_0\bm\gamma \cdot \nabla w ,v )_{\mathcal{T}_h}
		-\langle (\bm{\Pi}_0\bm\gamma \cdot\bm nw , \widehat{v} \rangle_{\partial\mathcal{T}_h}+\langle \bm\gamma \cdot\bm n(\widehat w -w ),  	({v} -\widehat{v} )\rangle_{\partial\mathcal{T}_h}.
	\end{align*}
	Since $\bm \gamma  \in  [W^{1,\infty}({\Omega})]^d$, then $\|\bm\gamma -\bm{\Pi}_0\bm\gamma \|_{0,\infty,K}\le Ch_K\|\bm\gamma \|_{1,\infty,K}$.  Use the local  inverse inequality \eqref{inverse_esti} to get
	\begin{align*}
		\hspace{1em}&\hspace{-1em}-(\nabla\cdot\bm\gamma  w , v )_{\mathcal{T}_h}-(\bm\gamma w ,\nabla v )_{\mathcal{T}_h}
		+\langle \bm\gamma \cdot\bm n\widehat w , {v} \rangle_{\partial\mathcal{T}_h}\\
		&\le C\left(\|w \|_{\mathcal{T}_h}^2
		+\|v \|_{\mathcal{T}_h}^2+\|h_K^{1/2}(P_M v -\widehat{v} )\|_{\partial\mathcal{T}_h}^2\right)\\
		&\quad +(\nabla\cdot[\bm{\Pi}_0\bm\gamma  w ],v )_{\mathcal{T}_h}
		-\langle \bm{\Pi}_0\bm\gamma  \cdot\bm n w ,\widehat{v} \rangle_{\partial\mathcal{T}_h}+\langle \bm\gamma \cdot\bm n(\widehat w -w ), {v} -\widehat{v} \rangle_{\partial\mathcal{T}_h}.
	\end{align*}
	This proves the desired result.
\end{proof}

\subsection{Stability}
Next, we  prove the Ensemble HDG system \eqref{hdg} is  unconditionally stable.   Unlike the previous works, we do not assume the Dirichlet boundary conditions are zeros. Hence,  the proof here is more involved.

\begin{theorem}\label{stability_HDG} 
	The ensemble HDG system \eqref{hdg} is unconditionally stable if the condition \eqref{cond-c} holds. In particular, for $j=1,2,\ldots, J$, we have 
	\begin{align*}
		\hspace{1em}&\hspace{-1em}\max_{2\le n\le N}  \|u_{jh}^n\|_{\mathcal{T}_h}^2 +\Delta t \sum_{n=2}^{N} \|\sqrt{\overline{c}^n}\bm q_{jh}^n\|^2_{\mathcal{T}_h}\\
		&\le C\Delta t\sum_{n=2}^N\left(
		\|f_j^n\|^2_{\mathcal{T}_h}+\|g^n_j\|_{1/2,\partial\Omega}^2
		\right) + 
		C\left(\|u_{jh}^0\|^2_{\mathcal{T}_h}
		+\|u_{jh}^1\|^2_{\mathcal{T}_h}
		+\Delta t\|\sqrt{\overline{c}^1}\bm q_{jh}^1\|^2_{\mathcal{T}_h}
		+ \|\partial_{t} g_j\|^2_{L^2(0,T;H^{1/2}(\partial\Omega))}
		\right).
	\end{align*}
\end{theorem}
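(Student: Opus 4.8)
The plan is to run a discrete energy argument on the equivalent form \eqref{full_discretion_ensemble}. For each fixed $j$ and time level $n$ I would test \eqref{full_discretion_ensemble_a} with $\bm r_j=\bm q_{jh}^n$, test the momentum equation with $w_j=u_{jh}^n$, and use the conservation equation \eqref{full_discretizationc} with the admissible choice $\mu_j=\widehat u_{jh}^n-\ell_j^n\in M_h(0)$, where $\ell_j^n$ extends $P_Mg_j^n$ by zero to the interior faces. After one integration by parts in \eqref{full_discretion_ensemble_a}, the volume pairings $(u_{jh}^n,\nabla\cdot\bm q_{jh}^n)_{\mathcal T_h}$ cancel against $(\nabla\cdot\bm q_{jh}^n,u_{jh}^n)_{\mathcal T_h}$ and the two $\langle\cdot,\bm q_{jh}^n\cdot\bm n\rangle$ traces combine to $\langle\widehat u_{jh}^n,\bm q_{jh}^n\cdot\bm n\rangle_{\partial\mathcal T_h}$; inserting the flux definition \eqref{flux}, using $\langle h_K^{-1}(P_Mu_{jh}^n-\widehat u_{jh}^n),u_{jh}^n\rangle=\langle h_K^{-1}(P_Mu_{jh}^n-\widehat u_{jh}^n),P_Mu_{jh}^n\rangle$, and invoking conservation to replace $\langle\widehat{\bm q}_{jh}^n\cdot\bm n,\widehat u_{jh}^n\rangle$ by the single boundary integral $\langle\widehat{\bm q}_{jh}^n\cdot\bm n,P_Mg_j^n\rangle_{\mathcal E_h^\partial}$, I arrive at the clean identity in which the diffusive energy $\|\sqrt{\overline c^n}\bm q_{jh}^n\|_{\mathcal T_h}^2$, the time term $(\partial_t^+u_{jh}^n,u_{jh}^n)_{\mathcal T_h}$, the full stabilization $\|h_K^{-1/2}(P_Mu_{jh}^n-\widehat u_{jh}^n)\|_{\partial\mathcal T_h}^2$ and the convection group with $\bm\gamma=\overline{\bm\beta}^n$ sit on the left, and the two ensemble perturbations in $\overline c^n-c_j^n$, $\overline{\bm\beta}^n-\bm\beta_j^n$, the source $(f_j^n,u_{jh}^n)_{\mathcal T_h}$, and the boundary term $-\langle\widehat{\bm q}_{jh}^n\cdot\bm n,P_Mg_j^n\rangle_{\mathcal E_h^\partial}$ sit on the right.

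For the time term I would apply \Cref{id} with $a=u_{jh}^n,\ b=u_{jh}^{n-1},\ c=u_{jh}^{n-2}$ to write $(\partial_t^+u_{jh}^n,u_{jh}^n)_{\mathcal T_h}=\tfrac1{4\Delta t}(E^n-E^{n-1})+\tfrac1{4\Delta t}\|u_{jh}^n-2u_{jh}^{n-1}+u_{jh}^{n-2}\|_{\mathcal T_h}^2$ with $E^n:=\|u_{jh}^n\|_{\mathcal T_h}^2+\|2u_{jh}^n-u_{jh}^{n-1}\|_{\mathcal T_h}^2$; this BDF2 telescoping survives the sum and yields $\max_n\|u_{jh}^n\|_{\mathcal T_h}^2$. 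Since $\nabla\cdot\overline{\bm\beta}^n=\tfrac1J\sum_j\nabla\cdot\bm\beta_j^n\le0$, the convection group plus stabilization is exactly the left side of \eqref{V1}, so \Cref{VV} bounds it below by $\tfrac12\|h_K^{-1/2}(P_Mu_{jh}^n-\widehat u_{jh}^n)\|_{\partial\mathcal T_h}^2-Ch\|\nabla u_{jh}^n\|_{\mathcal T_h}^2$ (up to a boundary leftover $\langle\overline{\bm\beta}^n\cdot\bm n\,P_Mg_j^n,P_Mg_j^n\rangle_{\mathcal E_h^\partial}$, harmless since $\widehat u_{jh}^n\notin M_h(0)$, controlled by $C\|g_j^n\|_{1/2,\partial\Omega}^2$). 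The negative gradient term is reabsorbed by \Cref{ub}, which dominates $\|\nabla u_{jh}^n\|_{\mathcal T_h}$ by the fluxes at levels $n,n-1,n-2$ and the jump, so taking $h$ small leaves a coercive $\|\sqrt{\overline c^n}\bm q_{jh}^n\|_{\mathcal T_h}^2$ and jump on the left plus lower-order flux contributions at the two previous levels. The $\overline c^n-c_j^n$ term is split by Cauchy--Schwarz and the condition \eqref{cond-c}, whose factor $\tfrac13$ is exactly what lets $((\overline c^n-c_j^n)(2\bm q_{jh}^{n-1}-\bm q_{jh}^{n-2}),\bm q_{jh}^n)_{\mathcal T_h}$ be written as an absorbable multiple of $\|\sqrt{\overline c^n}\bm q_{jh}^n\|^2$ plus multiples of $\|\sqrt{\overline c^{n-1}}\bm q_{jh}^{n-1}\|^2$ and $\|\sqrt{\overline c^{n-2}}\bm q_{jh}^{n-2}\|^2$ that enter the sum; the $\overline{\bm\beta}^n-\bm\beta_j^n$ right-hand side is dispatched by \eqref{frequ} with $\bm\gamma=\overline{\bm\beta}^n-\bm\beta_j^n$, $w=2u_{jh}^{n-1}-u_{jh}^{n-2}$, $\widehat w=2\widehat u_{jh}^{n-1}-\widehat u_{jh}^{n-2}$, $v=u_{jh}^n$, $\widehat v=\widehat u_{jh}^n$, producing $\|\cdot\|_{\mathcal T_h}^2$-type terms at the three levels, jumps, and $\bm\Pi_0$-remainders again absorbed through \Cref{ub} and \Cref{poincare}.

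The genuinely delicate point—and the reason nonhomogeneous $g_j$ makes the proof "more involved"—is the boundary term $\langle\widehat{\bm q}_{jh}^n\cdot\bm n,P_Mg_j^n\rangle_{\mathcal E_h^\partial}$, together with the boundary part of the stabilization pairing. A direct trace/inverse estimate of $\bm q_{jh}^n\cdot\bm n$ or of $h_K^{-1}g_j^n$ on $\mathcal E_h^\partial$ would introduce uncontrolled negative powers of $h$ and destroy the bound. Instead I would introduce a bounded $H^1(\Omega)$ extension $\Phi_j^n$ of $g_j^n$ with $\|\Phi_j^n\|_{1}\le C\|g_j^n\|_{1/2,\partial\Omega}$, use conservation to rewrite the boundary integral as the full-mesh pairing $\langle\widehat{\bm q}_{jh}^n\cdot\bm n,\Phi_j^n\rangle_{\partial\mathcal T_h}$, and apply the broken divergence theorem to turn its $\bm q_{jh}^n\cdot\bm n$ part into $(\bm q_{jh}^n,\nabla\Phi_j^n)_{\mathcal T_h}+(\nabla\cdot\bm q_{jh}^n,\Phi_j^n)_{\mathcal T_h}$; the first factor is Young-absorbed into $\|\sqrt{\overline c^n}\bm q_{jh}^n\|^2$ and $C\|g_j^n\|_{1/2,\partial\Omega}^2$, while $\nabla\cdot\bm q_{jh}^n$ is eliminated through \eqref{full_discretion_ensemble_b}, exposing a factor $\partial_t^+u_{jh}^n$. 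After multiplying by $\Delta t$ and summing in $n$, a summation by parts in time transfers this difference onto the data, so that $\partial_t^+\Phi_j^n$ delivers precisely $\|\partial_t g_j\|_{L^2(0,T;H^{1/2}(\partial\Omega))}^2$, with endpoint contributions controlled by $\|u_{jh}^0\|_{\mathcal T_h}$, $\|u_{jh}^1\|_{\mathcal T_h}$ and $\Delta t\|\sqrt{\overline c^1}\bm q_{jh}^1\|_{\mathcal T_h}$. I expect the careful choice of this extension, keeping the interface and stabilization-boundary pieces free of negative $h$-powers while arranging the discrete time integration by parts, to be the main obstacle.

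Finally, I would multiply the per-step estimate by $\Delta t$ and sum from $n=2$ to an arbitrary $m\le N$: the telescoped $E^m$ bounds $\|u_{jh}^m\|_{\mathcal T_h}^2\le E^m$, the coercive flux and jump terms accumulate as $\Delta t\sum\|\sqrt{\overline c^n}\bm q_{jh}^n\|_{\mathcal T_h}^2$, and the remaining $\Delta t\sum\|u_{jh}^k\|_{\mathcal T_h}^2$ (and the flux tails shifted from the $c$- and $\beta$-perturbations) on the right are absorbed by the discrete Gr\"onwall inequality once $\Delta t$ is small enough to absorb the implicit $E^m$ contribution, giving a bound uniform in $m$. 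Taking the maximum over $m$ produces $\max_{2\le n\le N}\|u_{jh}^n\|_{\mathcal T_h}^2+\Delta t\sum_{n=2}^N\|\sqrt{\overline c^n}\bm q_{jh}^n\|_{\mathcal T_h}^2$ on the left and the asserted data and start-up terms on the right, which is exactly \Cref{stability_HDG}.
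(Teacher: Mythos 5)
Your energy skeleton (BDF2 identity from \Cref{id}, coercivity via \Cref{VV}, control of $\|\nabla u_{jh}^n\|_{\mathcal T_h}$ via \Cref{ub}, absorption of the $\overline c^n-c_j^n$ term through \eqref{cond-c}, treatment of the $\overline{\bm\beta}^n-\bm\beta_j^n$ terms through \eqref{frequ}, then Gr\"onwall) is exactly the paper's. Where you genuinely diverge is the treatment of the inhomogeneous boundary data, and that is where your argument has a gap. The paper does \emph{not} test with the original unknowns and then estimate a leftover boundary flux; it first lifts the data into the discrete unknowns, setting $w_{jh}^n=u_{jh}^n-\Pi_{k+1}m_j^n$ and $\widehat w_{jh}^n=\widehat u_{jh}^n-P_Mm_j^n$ as in \eqref{def_w}, so that $\widehat w_{jh}^n\in M_h(0)$ is an admissible test function and \emph{no} boundary pairing with $\widehat{\bm q}_{jh}^n\cdot\bm n$ ever appears. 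The point of that specific lifting is that the only place the stabilization weight $h_K^{-1}$ meets the data is in the term $\langle h_K^{-1}P_M(\Pi_{k+1}m_j^n-m_j^n),P_Mv_j-\widehat v_j\rangle_{\partial\mathcal T_h}$, and there the approximation property of $\Pi_{k+1}$ supplies the factor $h_K$ needed to cancel the $h_K^{-1}$ (this is the paper's term $R_{16}$).

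Your route leaves you with $\langle\widehat{\bm q}_{jh}^n\cdot\bm n,P_Mg_j^n\rangle_{\mathcal E_h^\partial}$, whose stabilization part is $\langle h_K^{-1}(P_Mu_{jh}^n-\widehat u_{jh}^n),P_Mg_j^n\rangle_{\mathcal E_h^\partial}$ (and, after your proposed rewriting over all of $\partial\mathcal T_h$, the pairing $\langle h_K^{-1}(P_Mu_{jh}^n-\widehat u_{jh}^n),\Phi_j^n\rangle_{\partial\mathcal T_h}$ with a generic $H^1$ extension $\Phi_j^n$). This cannot be bounded by $C\|g_j^n\|_{1/2,\partial\Omega}$ times the square root of the available stabilization energy: Cauchy--Schwarz gives $\|h_K^{-1/2}(P_Mu_{jh}^n-\widehat u_{jh}^n)\|_{\partial\mathcal T_h}\,\|h_K^{-1/2}\Phi_j^n\|_{\partial\mathcal T_h}$, and the scaled trace inequality only yields $\|h_K^{-1/2}\Phi_j^n\|_{\partial K}^2\le C(h_K^{-2}\|\Phi_j^n\|_K^2+\|\nabla\Phi_j^n\|_K^2)$, which blows up as $h\to0$. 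An $H^1$ extension alone has no mechanism to kill that negative power; you need the data to enter through a quantity like $\Pi_{k+1}m_j^n-m_j^n$ that is itself $O(h_K)$, which is precisely what the discrete lifting provides. You flag this as ``the main obstacle'' but your proposed fix (broken divergence theorem on the $\bm q_{jh}^n\cdot\bm n$ part, elimination of $\nabla\cdot\bm q_{jh}^n$ via the scheme, discrete summation by parts in time) addresses only the $\bm q_{jh}^n\cdot\bm n$ contribution, not the $h_K^{-1}$ stabilization contribution, and would in any case reintroduce all the convection and ensemble-perturbation terms paired against $\Phi_j^n$. The remedy is to adopt the paper's change of variables \eqref{def_w} at the outset and prove \Cref{stability_HDG_lemma} for $(w_{jh}^n,\widehat w_{jh}^n)$; the stated theorem then follows by the triangle inequality and taking the infimum over extensions $m_j$ in the definition of the $H^{1/2}$ norm.
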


The proof of \Cref{stability_HDG} follows by triangle inequality, the definition of $H^{1/2}$ norm and \Cref{stability_HDG_lemma}.

To deal with the inhomogeneous boundary condition in the stability analysis, we need some additional notation. Let  $m_j\in H^1(0,T; H^1(\Omega))$ be an arbitrary function such that $m_j|_{\partial\Omega}=g_j$, and define 
\begin{align}\label{def_w}
	w_{jh}^n=u_{jh}^n-\Pi_{k+1} m_j^n,\quad \widehat{w}_{jh}^n=\widehat{u}_{jh}^n-P_M m_j^n.
\end{align}
This implies $ \widehat{w}_{jh}^n=0$ on $\mathcal{E}_h^{\partial}$. Now we give the estimate for $w_{jh}^n$.
\begin{lemma} 
	Let $(w_{jh}^n, \widehat{w}_{jh}^n)$ be defined in \eqref{def_w} and $(\bm q_{jh}^n, u_{jh}^n, \widehat{u}_{jh}^n)$ be the solution of \eqref{full_discretion_ensemble},	then we have the estimate
	\begin{align}\label{euqal-w}
		\begin{split}
			\|\nabla w_{jh}^n\|_{\mathcal{T}_h}
			& \le C\left(\|\sqrt{\overline{c}^n}\bm q_{jh}^n\|_{\mathcal{T}_h}
			+\|\sqrt{\overline{c}^{n-1}}\bm q_{jh}^{n-1}\|_{\mathcal{T}_h}
			+\|\sqrt{\overline{c}^{n-2}}\bm q_{jh}^{n-2}\|_{\mathcal{T}_h}\right)\\
			&\quad+C	\|h_K^{-1/2}(P_M w_{jh}^n-\widehat{w}_{jh}^n)\|_{\partial\mathcal{T}_h}
			+C\|\nabla m_j^n\|_{\mathcal{T}_h}.
		\end{split}
	\end{align}
	
\end{lemma}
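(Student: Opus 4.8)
The plan is to mimic the proof of \Cref{ub}, testing the flux equation with $\nabla w_{jh}^n$ and carefully tracking the extra terms produced by the lift $m_j$ of the inhomogeneous boundary data. First I would observe that $w_{jh}^n\in W_h$, so $\nabla w_{jh}^n\in\bm V_h$ is an admissible test function. Substituting $\bm r_j=\nabla w_{jh}^n$ into \eqref{full_discretion_ensemble_a} and integrating the term $(u_{jh}^n,\nabla\cdot\nabla w_{jh}^n)_{\mathcal T_h}$ by parts gives
\[
(\nabla u_{jh}^n,\nabla w_{jh}^n)_{\mathcal T_h}
= -(\overline c^n\bm q_{jh}^n,\nabla w_{jh}^n)_{\mathcal T_h}
+\langle u_{jh}^n-\widehat u_{jh}^n,\nabla w_{jh}^n\cdot\bm n\rangle_{\partial\mathcal T_h}
+((\overline c^n-c_j^n)(2\bm q_{jh}^{n-1}-\bm q_{jh}^{n-2}),\nabla w_{jh}^n)_{\mathcal T_h}.
\]
Writing $\nabla u_{jh}^n=\nabla w_{jh}^n+\nabla\Pi_{k+1}m_j^n$ on the left-hand side isolates $\|\nabla w_{jh}^n\|_{\mathcal T_h}^2$ and moves a cross term $(\nabla\Pi_{k+1}m_j^n,\nabla w_{jh}^n)_{\mathcal T_h}$ to the right.

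Next I would treat the boundary term. Since $\nabla w_{jh}^n\in[\mathcal P^k(K)]^d$ and each (planar) face normal is constant, $\nabla w_{jh}^n\cdot\bm n\in\mathcal P^k(e)$ on every $e$, so the projection property \eqref{L2_edge} allows me to replace $u_{jh}^n$ and $\Pi_{k+1}m_j^n$ by their $P_M$ traces in the pairing. Using \eqref{def_w} to substitute $u_{jh}^n=w_{jh}^n+\Pi_{k+1}m_j^n$ and $\widehat u_{jh}^n=\widehat w_{jh}^n+P_M m_j^n$, the boundary term splits as
\[
\langle u_{jh}^n-\widehat u_{jh}^n,\nabla w_{jh}^n\cdot\bm n\rangle_{\partial\mathcal T_h}
=\langle P_M w_{jh}^n-\widehat w_{jh}^n,\nabla w_{jh}^n\cdot\bm n\rangle_{\partial\mathcal T_h}
+\langle P_M(\Pi_{k+1}m_j^n-m_j^n),\nabla w_{jh}^n\cdot\bm n\rangle_{\partial\mathcal T_h},
\]
where the first piece is exactly the quantity appearing on the right of \eqref{euqal-w}.

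I would then estimate every term by Cauchy--Schwarz and the inverse inequality \eqref{inverse_esti} (applied componentwise to $\nabla w_{jh}^n$), so that each carries a single factor $\|\nabla w_{jh}^n\|_{\mathcal T_h}$; dividing by this factor yields \eqref{euqal-w}. The coefficient weights are absorbed using $c_0\le\overline c^n\le C$, which gives $\|\overline c^n\bm q_{jh}^n\|_{\mathcal T_h}\le C\|\sqrt{\overline c^n}\bm q_{jh}^n\|_{\mathcal T_h}$ and likewise converts the $n-1,n-2$ flux norms, together with the bound $|\overline c^n-c_j^n|\le C$ coming from \eqref{cond-c}.

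The main obstacle, and the only genuinely new ingredient relative to \Cref{ub}, is controlling the two data contributions by $\|\nabla m_j^n\|_{\mathcal T_h}$. I would bound $\|\nabla\Pi_{k+1}m_j^n\|_{\mathcal T_h}\le C\|\nabla m_j^n\|_{\mathcal T_h}$ by the (local) $H^1$-stability of the $L^2$ projection on shape-regular elements, and I would bound the boundary data term through $\|h_K^{-1/2}P_M(\Pi_{k+1}m_j^n-m_j^n)\|_{\partial\mathcal T_h}\le C\|\nabla m_j^n\|_{\mathcal T_h}$, using the $L^2$-stability of $P_M$, a scaled trace inequality, the approximation estimate $\|\Pi_{k+1}m_j^n-m_j^n\|_K\le Ch_K\|\nabla m_j^n\|_K$ (a consequence of \Cref{lemmainter} and Poincar\'e), and once more the $H^1$-stability of $\Pi_{k+1}$. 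Making these two mesh-dependent estimates rigorous on a general polyhedral shape-regular mesh is the delicate step; everything else parallels \Cref{ub}.
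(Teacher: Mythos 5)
Your proposal is correct and is essentially the paper's argument: the paper simply invokes \Cref{ub} together with the triangle inequality applied to $\nabla w_{jh}^n=\nabla u_{jh}^n-\nabla\Pi_{k+1}m_j^n$, then converts $\|h_K^{-1/2}(P_Mu_{jh}^n-\widehat u_{jh}^n)\|_{\partial\mathcal T_h}$ into the $w$-jump plus $\|h_K^{-1/2}(P_M\Pi_{k+1}m_j^n-P_Mm_j^n)\|_{\partial\mathcal T_h}$, whereas you re-run the same test-function computation with $\bm r_j=\nabla w_{jh}^n$ and split afterwards. The two auxiliary bounds you flag as the delicate step ($H^1$-stability of $\Pi_{k+1}$ and the scaled trace/approximation bound for $\Pi_{k+1}m_j^n-m_j^n$) are exactly the ones the paper uses implicitly, so no substantive difference remains.
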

\begin{proof}
	By  \Cref{ub} and  the triangle inequality,  we get
	\begin{align*}
		\|\nabla w_{jh}^n\|_{\mathcal{T}_h}&\le
		\|\nabla u_{jh}^n\|_{\mathcal{T}_h}+\|\nabla \Pi_{k+1} m_{j}^n\|_{\mathcal{T}_h}\\
		&\le	C\left(
		\|\sqrt{\overline{c}^n}\bm q_{jh}^n\|_{\mathcal{T}_h}+
		\|\sqrt{\overline{c}^{n-1}}\bm q_{jh}^{n-1}\|_{\mathcal{T}_h}
		+	\|\sqrt{\overline{c}^{n-2}}\bm q_{jh}^{n-2}\|_{\mathcal{T}_h} + \|h_K^{-1/2}(P_Mu_{jh}^n-\widehat{u}_{jh}^n)\|_{\partial\mathcal{T}_h}\right)\\
		&\quad  +C\|\nabla m_{j}^n\|_{\mathcal{T}_h}\\
		&\le C\left(
		\|\sqrt{\overline{c}^n}\bm q_{jh}^n\|_{\mathcal{T}_h}
		+
		\|\sqrt{\overline{c}^{n-1}}\bm q_{jh}^{n-1}\|_{\mathcal{T}_h}
		+	\|\sqrt{\overline{c}^{n-2}}\bm q_{jh}^{n-2}\|_{\mathcal{T}_h}\right)\\
		&\quad+C\left(\|h_K^{-1/2}(P_M w_{jh}^n-\widehat{w}_{jh}^n)\|_{\partial\mathcal{T}_h} +	\|h_K^{-1/2}(P_M\Pi_{k+1}m_j^n-P_M m_j^n)\|_{\partial\mathcal{T}_h} \right) +C\|\nabla m_{j}^n\|_{\mathcal{T}_h}\\
		&\le	C\left(
		\|\sqrt{\overline{c}^n}\bm q_{jh}^n\|_{\mathcal{T}_h}
		+
		\|\sqrt{\overline{c}^{n-1}}\bm q_{jh}^{n-1}\|_{\mathcal{T}_h}
		+	\|\sqrt{\overline{c}^{n-2}}\bm q_{jh}^{n-2}\|_{\mathcal{T}_h}\right)\nonumber\\
		&\quad+C	\|h_K^{-1/2}(P_M w_{jh}^n-\widehat{w}_{jh}^n)\|_{\partial\mathcal{T}_h}
		+C\|\nabla m_j^n\|_{\mathcal{T}_h}.
	\end{align*}
\end{proof}

\begin{lemma} \label{stability_HDG_lemma}
	Let $(w_{jh}^n, \widehat{w}_{jh}^n)$ be defined in \eqref{def_w} and $(\bm q_{jh}^n, u_{jh}^n, \widehat{u}_{jh}^n)$ be the solution of \eqref{full_discretion_ensemble}, if the condition \eqref{cond-c} holds, we have 
	\begin{align*}
		&\max_{2\le n\le N} \|w_{jh}^n\|_{\mathcal{T}_h}^2  +\Delta t\sum_{n=2}^N \|\sqrt{\overline{c}^n}\bm q_{jh}^n\|^2_{\mathcal{T}_h}
		\le
		C\Delta t\sum_{n=2}^N\left(
		\|f_j^n\|^2_{\mathcal{T}_h}+\|\nabla m_j^n\|_{\mathcal{T}_h}^2
		\right)\\
		&\qquad+C
		\left(\|w_{jh}^0\|^2_{\mathcal{T}_h}
		+
		\|w_{jh}^1\|^2_{\mathcal{T}_h}
		+ \|\partial_{t} m_j\|^2_{L^2(0,T;L^2(\Omega))}
		+\Delta t\|\sqrt{c^1}\bm q_{jh}^1\|^2_{\mathcal{T}_h}
		\right).
	\end{align*}
\end{lemma}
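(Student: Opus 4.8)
The plan is to run a discrete energy argument on the homogenized system. First I would insert $u_{jh}^n=w_{jh}^n+\Pi_{k+1}m_j^n$ and $\widehat u_{jh}^n=\widehat w_{jh}^n+P_Mm_j^n$ into \eqref{full_discretion_ensemble}, which pushes all the boundary data into source terms while guaranteeing $(w_{jh}^n,\widehat w_{jh}^n)\in W_h\times M_h(0)$. I would then test \eqref{full_discretion_ensemble_a} with $\bm r_j=\bm q_{jh}^n$ and \eqref{full_discretion_ensemble_b} with $(w_j,\mu_j)=(w_{jh}^n,\widehat w_{jh}^n)$, and add the two equations. The standard HDG cancellation of the flux--scalar coupling (the $(u_{jh}^n,\nabla\cdot\bm q_{jh}^n)_{\mathcal T_h}$ and $\langle\widehat u_{jh}^n,\bm q_{jh}^n\cdot\bm n\rangle_{\partial\mathcal T_h}$ pairs) then leaves an energy identity whose left side contains $(\partial^+_tu_{jh}^n,w_{jh}^n)_{\mathcal T_h}$, the dissipation $\|\sqrt{\overline c^n}\bm q_{jh}^n\|_{\mathcal T_h}^2$, the ensemble-mean convection terms, and the stabilization term; the homogenization also produces residual flux--$m_j$ couplings, which I would bound by Cauchy--Schwarz and Young against $\|\sqrt{\overline c^n}\bm q_{jh}^n\|_{\mathcal T_h}$ and $\|\nabla m_j^n\|_{\mathcal T_h}$.

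Next I would treat the three groups of left-hand terms. For the time derivative I write $(\partial^+_tu_{jh}^n,w_{jh}^n)_{\mathcal T_h}=(\partial^+_tw_{jh}^n,w_{jh}^n)_{\mathcal T_h}+(\partial^+_t\Pi_{k+1}m_j^n,w_{jh}^n)_{\mathcal T_h}$ and apply \Cref{id} with $a=w_{jh}^n$, $b=w_{jh}^{n-1}$, $c=w_{jh}^{n-2}$, converting the first piece into $\tfrac{1}{4\Delta t}$ times the telescoping difference of the G-norm $\|w_{jh}^n\|_{\mathcal T_h}^2+\|2w_{jh}^n-w_{jh}^{n-1}\|_{\mathcal T_h}^2$ plus the nonnegative second-difference term $\tfrac{1}{4\Delta t}\|w_{jh}^n-2w_{jh}^{n-1}+w_{jh}^{n-2}\|_{\mathcal T_h}^2$; the remainder is bounded via \eqref{es-parttial-01} of \Cref{error_time}, whose $\Delta t^{-1}$ factor cancels against the later $2\Delta t$ multiplier to give an $O(1)$ contribution $\|\partial_t m_j\|^2_{L^2(0,T;L^2(\Omega))}$. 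For the mean convection together with the stabilization term I apply \Cref{VV} with $\bm\gamma=\overline{\bm\beta}^n$, which is legitimate since $\nabla\cdot\overline{\bm\beta}^n=\tfrac1J\sum_j\nabla\cdot\bm\beta_j^n\le0$ by \eqref{beta_con}; this yields the coercive lower bound $\tfrac12\|h_K^{-1/2}(P_Mw_{jh}^n-\widehat w_{jh}^n)\|_{\partial\mathcal T_h}^2-Ch\|\nabla w_{jh}^n\|_{\mathcal T_h}^2$, and the spurious $Ch\|\nabla w_{jh}^n\|_{\mathcal T_h}^2$ is reabsorbed for $h$ small using the gradient estimate \eqref{euqal-w} at the cost of the fluxes and a $\|\nabla m_j^n\|_{\mathcal T_h}^2$ term. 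The convection terms carrying the $m_j^n$ part of $u_{jh}^n$ are controlled by Cauchy--Schwarz, \eqref{euqal-w}, and \Cref{poincare}.

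I would then bound the ensemble right-hand side. The coefficient-difference flux term $((\overline c^n-c_j^n)(2\bm q_{jh}^{n-1}-\bm q_{jh}^{n-2}),\bm q_{jh}^n)_{\mathcal T_h}$ is split by Young's inequality, and here the smallness hypothesis \eqref{cond-c} is essential: $|\overline c^n-c_j^n|<\tfrac13\min\{\overline c^n,\overline c^{n-1},\overline c^{n-2}\}$ lets me express the bound purely through the weighted flux norms $\|\sqrt{\overline c^\ell}\bm q_{jh}^\ell\|_{\mathcal T_h}$ for $\ell=n,n-1,n-2$, with a coefficient strictly below $1$ on the current level. The explicit convection-difference terms carrying $(\overline{\bm\beta}^n-\bm\beta_j^n)(2u_{jh}^{n-1}-u_{jh}^{n-2})$ are estimated by Cauchy--Schwarz, the inverse inequality \eqref{inverse_esti}, and \eqref{euqal-w}, producing lagged scalar and flux norms at levels $n-1,n-2$ together with boundary-data contributions, and the genuine data term $(f_j^n,w_{jh}^n)_{\mathcal T_h}$ is handled by Young's inequality.

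Finally I multiply by $2\Delta t$, sum over $n=2,\dots,M$ for an arbitrary $M\le N$, and telescope the G-norm. The hard part is the bookkeeping that closes the estimate: the lagged flux norms $\|\sqrt{\overline c^{n-1}}\bm q_{jh}^{n-1}\|_{\mathcal T_h}^2$ and $\|\sqrt{\overline c^{n-2}}\bm q_{jh}^{n-2}\|_{\mathcal T_h}^2$ generated by the ensemble terms must be reabsorbed into the dissipation $\Delta t\sum_n\|\sqrt{\overline c^n}\bm q_{jh}^n\|_{\mathcal T_h}^2$. After reindexing, each flux level appears once as a current term and at most twice as a lagged term, and the factor $\tfrac13$ in \eqref{cond-c} is precisely what forces the total coefficient in front of every $\|\sqrt{\overline c^n}\bm q_{jh}^n\|_{\mathcal T_h}^2$ to stay strictly positive, so the absorption is valid; the leftover start-up contributions appear as the $\|w_{jh}^0\|_{\mathcal T_h}^2$, $\|w_{jh}^1\|_{\mathcal T_h}^2$, and $\Delta t\|\sqrt{c^1}\bm q_{jh}^1\|_{\mathcal T_h}^2$ terms. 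This leaves an inequality bounding $\|w_{jh}^M\|_{\mathcal T_h}^2+\Delta t\sum_n\|\sqrt{\overline c^n}\bm q_{jh}^n\|_{\mathcal T_h}^2$ by the data plus a term $C\Delta t\sum_n\|w_{jh}^n\|_{\mathcal T_h}^2$; a discrete Grönwall inequality removes the latter, and taking the maximum over $M$ gives the stated bound.
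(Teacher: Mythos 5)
Your proposal follows the paper's proof essentially step for step: the same homogenization via $m_j$ and \eqref{def_w}, the same choice of test functions, \Cref{id} for the BDF2 telescoping, \Cref{VV} for the coercivity of the mean convection plus stabilization, the $\tfrac13$ smallness in \eqref{cond-c} to absorb the lagged flux norms, the gradient bound \eqref{euqal-w} to reabsorb the $Ch\|\nabla w_{jh}^n\|^2_{\mathcal T_h}$ remainder, and a final summation plus Gronwall. The only minor divergence is in the convection-difference terms, where the paper routes the awkward piece through the identity \eqref{frequ} and re-substitutes $\bm r_j=\bm\Pi_0(\overline{\bm\beta}^n-\bm\beta_j^n)(2w_{jh}^{n-1}-w_{jh}^{n-2})$ into \eqref{w-a}, whereas you bound it directly via Cauchy--Schwarz, the inverse inequality and \eqref{euqal-w}; both routes close the estimate.
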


\begin{proof}  
	By the definitions of $w_{jh}^n$, $\widehat{w}_{jh}^n$ in \eqref{def_w},
	we can  rewrite \eqref{full_discretion_ensemble_a} and \eqref{full_discretion_ensemble_b} as
	\begin{subequations}\label{HDG2}
		\begin{align}\label{w-a}
			\begin{split}
				\hspace{1em}&\hspace{-1em} (\overline c^n\bm{q}^{n}_{jh},\bm{r}_j)_{\mathcal{T}_h}-(w^n_{jh},\nabla\cdot\bm{r}_j)_{\mathcal{T}_h}+\langle\widehat{w}^n_{jh},\bm{r}_j\cdot\bm n \rangle_{\partial{\mathcal{T}_h}}\\
				&=((\overline c^n-c_j^n)(2\bm{q}^{n-1}_{jh}-\bm{q}^{n-2}_{jh}),\bm{r}_j)_{\mathcal{T}_h} +(m_j^n,\nabla\cdot\bm r_j)_{\mathcal{T}_h}
				-\langle m_j^n,\bm{r}_j\cdot \bm n \rangle_{\partial\mathcal{T}_h}, 
			\end{split}
		\end{align}
		\begin{align}
			\begin{split}
				\hspace{1em}&\hspace{-1em} (\partial^+_tw^n_{jh},v_j)_{\mathcal T_h}+(\nabla\cdot\bm{q}^n_{jh}, v_j)_{\mathcal{T}_h}-\langle\bm{q}^n_{jh}\cdot \bm{n},\widehat{v}_j\rangle_{\partial{\mathcal{T}_h}}-(\nabla\cdot\overline{\bm\beta}^n w_{jh}^n,v_j)_{\mathcal{T}_h}\\
				&\quad -( \overline{\bm \beta}^n w_{jh}^{n}, \nabla v_j)_{\mathcal{T}_h}
				+\langle \overline{\bm \beta}^n\cdot\bm n, \widehat{w}_{jh}^{n} {v}_j\rangle_{\partial\mathcal{T}_h}+\langle h_K^{-1/2} ( P_M w^n_{jh}-{\widehat{w}}^n_{jh}),P_Mv_j-\widehat{v}_j \rangle_{\partial\mathcal{T}_h}\\
				&= (f^n_j,v_j)_{\mathcal{T}_h}
				-(\partial^+_t (\Pi_{k+1} w_{j}^n),v_j)_{\mathcal T_h}-([\nabla\cdot(\overline{\bm\beta}^n-\bm\beta_j^n)](2w_{jh}^{n-1}-w_{jh}^{n-2}),v_j)_{\mathcal{T}_h}\\
				&\quad-( (\overline{\bm \beta}^n-\bm{\beta}^n_j) (2w_{jh}^{n-1}
				-w_{jh}^{n-2}), \nabla v_j)_{\mathcal{T}_h}
				+\langle (\overline{\bm \beta}^n-\bm \beta_j^n)\cdot\bm n,(2\widehat{w}_{jh}^{n-1}-\widehat{w}_{jh}^{n-2}) {v}_j\rangle_{\partial\mathcal{T}_h}\\
				&\quad-([\nabla\cdot(\overline{\bm \beta}^n-\bm{\beta}^n_j)] (2\Pi_{k+1}m_{j}^{n-1}-\Pi_{k+1}m_{j}^{n-2}),v_j)_{\mathcal{T}_h}\\
				&\quad-((\overline{\bm \beta}^n-\bm{\beta}^n_j) (2\Pi_{k+1}m_{j}^{n-1}-\Pi_{k+1}m_{j}^{n-2}),\nabla v_j)_{\mathcal{T}_h}\\
				&\quad+\langle (\overline{\bm \beta}^n-\bm \beta_j^n)\cdot\bm n,(2P_M m_{j}^{n-1}- P_M m_{j}^{n-2}) {v}_j\rangle_{\partial\mathcal{T}_h}\\
				&\quad +(\nabla\cdot\overline{\bm\beta}^n \Pi_{k+1} m_j^{n},v_j)_{\mathcal{T}_h}+( \overline{\bm \beta}^n \Pi_{k+1} m_j^{n}, \nabla v_j)_{\mathcal{T}_h}
				-\langle \overline{\bm \beta}^n\cdot\bm n, P_M m_j^{n} {v}_j\rangle_{\partial\mathcal{T}_h}\\
				&\quad-\langle h_K^{-1}  P_M (\Pi_{k+1} m_j^n-m_j^n), P_M v_j-\widehat{v}_j \rangle_{\partial\mathcal{T}_h}.
			\end{split}
		\end{align}	
	\end{subequations}
	Now we take $(\bm r_j,v_j,\widehat{v}_j)=(\bm q_{jh}^n,w_{jh}^n,\widehat{w}_{jh}^n)$ in \eqref{HDG2}, add them together,  use
	\Cref{id} and stability  \eqref{V1} with $(w,\mu,\bm\gamma)=(w_{jh}^n,\widehat{w}_{jh}^n,\overline{\bm\beta}^n)$ to get
	\begin{align*}
		\hspace{1em}&\hspace{-1em}\frac{\|w_{jh}^n\|_{\mathcal{T}_h}^2+\|2w_{jh}^n-w_{jh}^{n-1}\|^2_{\mathcal{T}_h}-\|w_{jh}^{n-1}\|^2_{\mathcal{T}_h}-\|2w_{jh}^{n-1}-w_{jh}^{n-2}\|^2_{\mathcal{T}_h}}{4\Delta t}\\
		&\quad+\frac{\|w_{jh}^n-2w_{jh}^{n-1}+w_{jh}^{n-2}\|^2_{\mathcal{T}_h}}{4\Delta t}+\|\sqrt{\overline{c}^n}\bm q_{jh}^n\|^2_{\mathcal{T}_h}+\frac{1}{2}\|h_K^{-1/2}(P_M w_{jh}^n-\widehat{w}_{jh}^n)\|^2_{\partial\mathcal{T}_h}\\
		& \le  ((\overline c^n-c_j^n)(2\bm{q}^{n-1}_{jh}-\bm{q}^{n-2}_{jh}), \bm q_{jh}^n)_{\mathcal{T}_h} +(m_j^n,\nabla\cdot\bm q_{jh}^n)_{\mathcal{T}_h}-\langle m_j^n,\bm q_{jh}^n\cdot \bm n \rangle_{\partial\mathcal{T}_h}\\
		&\quad+ (f^n_j,w_{jh}^n)_{\mathcal{T}_h}
		-(\partial^+_t\Pi_{k+1} m_{j}^n,w_{jh}^n)_{\mathcal T_h}+Ch\|\nabla w_{jh}^n\|_{\mathcal{T}_h}^2\\
		&\quad
		-(\nabla\cdot(\overline{\bm\beta}^n-\bm\beta_j^n)(2w_{jh}^{n-1}-w_{jh}^{n-2}),w_{jh}^n)_{\mathcal{T}_h}-( (\overline{\bm \beta}^n-\bm{\beta}^n_j) (2w_{jh}^{n-1}-w_{jh}^{n-2}), \nabla w_{jh}^n)_{\mathcal{T}_h}\\
		&\quad+\langle (\overline{\bm \beta}^n-\bm \beta_j^n)\cdot\bm n,(2\widehat{w}_{jh}^{n-1}-\widehat{w}_{jh}^{n-2}) {w}_{jh}^n\rangle_{\partial\mathcal{T}_h}\\
		&\quad -( \nabla\cdot(\overline{\bm \beta}^n-\bm{\beta}^n_j) (2\Pi_{k+1} m_{j}^{n-1}-\Pi_{k+1} m_{j}^{n-2}
		), w_{jh}^n)_{\mathcal{T}_h}\\
		&\quad-( (\overline{\bm \beta}^n-\bm{\beta}^n_j) (2\Pi_{k+1} m_{j}^{n-1}-\Pi_{k+1}m_{j}^{n-2}
		),  \nabla  w_{jh}^n)_{\mathcal{T}_h}\\
		&\quad+\langle (\overline{\bm \beta}^n-\bm \beta_j^n)\cdot\bm n,(2 m_{j}^{n-1}-P_M m_{j}^{n-2}) w_{jh}^n\rangle_{\partial\mathcal{T}_h}\nonumber\\
		&\quad +(\nabla\cdot\overline{\bm\beta}^n \Pi_{k+1} m_j^{n},w_{jh}^n)_{\mathcal{T}_h}+( \overline{\bm \beta}^n \Pi_{k+1} m_j^{n}, \nabla w_{jh}^n)_{\mathcal{T}_h}
		-\langle \overline{\bm \beta}^n\cdot\bm n,P_M m_j^{n} w_{jh}^n\rangle_{\partial\mathcal{T}_h}\\
		&\quad
		-\langle h_K^{-1/2} P_M (P_M m_j^n-m_j^n),P_M w_{jh}^n-\widehat{w}_{jh}^n \rangle_{\partial\mathcal{T}_h}\\
		&=:\sum_{i=1}^{16}R_i.
	\end{align*}
	Next,  we estimate $\{R_i\}_{i=1}^{16}$ term by term.
	By \eqref{cond-c}, there exists a constant $\kappa>0$, such that
	\begin{align}
		|\overline{c}^n-c_{j}^n|\le\frac{\kappa}{3(\kappa+1)}\min\{\overline{c}^n,\overline{c}^{n-1},\overline{c}^{n-2}\}.\label{condition-cc}
	\end{align}
	Use  the above condition \eqref{condition-cc} and  the  Young's inequality to get
	\begin{align*}
		R_1&\le 
		\frac{\kappa}{3(\kappa+1)}\left(
		2\|\sqrt{\overline{c}^{n-1}}\bm q_{jh}^{n-1}\|_{\mathcal{T}_h}
		+\|\sqrt{\overline{c}^{n-2}}\bm q_{jh}^{n-2}\|_{\mathcal{T}_h}\right)\|\sqrt{\overline{c}^{n}}\bm q_{jh}^{n}\|_{\mathcal{T}_h}\\
		&\le
		\frac{\kappa}{3(\kappa+1)}
		\left(\frac{3}{2}\|\sqrt{\overline{c}^{n}}\bm q_{jh}^{n}\|_{\mathcal{T}_h}^2
		+\|\sqrt{\overline{c}^{n-1}}\bm q_{jh}^{n-1}\|_{\mathcal{T}_h}^2
		+\frac{1}{2}\|\sqrt{\overline{c}^{n-2}}\bm q_{jh}^{n-2}\|_{\mathcal{T}_h}^2
		\right).
	\end{align*}
	For the term $R_2+R_3$, we use integration by parts to get
	\begin{align*}
		R_2+R_3=-(\nabla m_j^n,\bm q_{jh}^n)\le \frac{1}{4(\kappa+1)}\|\sqrt{\overline{c}^n}\bm q_{jh}^n\|^2_{\mathcal{T}_h}
		+C\|\nabla m_j^n\|_{\mathcal{T}_h}^2.
	\end{align*}
	For the term $R_4$, we use the Cauchy-Schwarz inequality to get
	\begin{align*}
		R_4\le 2(\|w_{jh}^n\|^2_{\mathcal{T}_h}
		+\|f_j^n\|^2_{\mathcal{T}_h}).
	\end{align*}
	For the term $R_5$, by the definition of $\Pi_{k+1}$ in \eqref{L2_do} and  we use the Cauchy-Schwarz  inequality  and the estimate \eqref{es-parttial-01} to get
	\begin{align*}
		R_5&=-(\partial^+_t\Pi_{k+1} m_{j}^n,w_{jh}^n)_{\mathcal T_h}=-(\partial^+_t m_{j}^n,w_{jh}^n)_{\mathcal T_h}\\
		& \le C(\|\partial_t^+m_{j}^n\|^2_{\mathcal{T}_h}
		+ \|w_{jh}^n\|_{\mathcal{T}_h}^2) \le C\Delta t^{-1} \|\partial_{t} m_j^n\|^2_{L^2(t_{n-2}, t_n;L^2(\Omega))}
		+C\|w_{jh}^n\|_{\mathcal{T}_h}^2.
	\end{align*}
	For the term $R_6$, by the  estimate \eqref{euqal-w} and let $h$ sufficient small, one has
	\begin{align*}
		R_6&\le Ch\left(\|\sqrt{\overline{c}^n}\bm
		q_{jh}^n\|^2_{\mathcal{T}_h}+\|\sqrt{\overline{c}^{n-1}}\bm q_{jh}^{n-1}\|^2_{\mathcal{T}_h}+\|\sqrt{\overline{c}^{n-2}}\bm q_{jh}^{n-2}\|^2_{\mathcal{T}_h}\right)\\
		&\quad+Ch	\|h_K^{-1/2}(P_M w_{jh}^n-\widehat{w}_{jh}^n)\|^2_{\partial\mathcal{T}_h}
		+Ch\|\nabla m_j^n\|^2_{\mathcal{T}_h}\\
		&\le \frac{1}{24(\kappa+1)}\left(
		\|\sqrt{\overline{c}^n}\bm q_{jh}^n\|^2_{\mathcal{T}_h}+
		\|\sqrt{\overline{c}^{n-1}}\bm q_{jh}^{n-1}\|^2_{\mathcal{T}_h}
		+\|\sqrt{\overline{c}^{n-2}}\bm q_{jh}^{n-2}\|^2_{\mathcal{T}_h}\right)\\
		&\quad+\frac{1}{16}	\|h_K^{-1/2}(P_M w_{jh}^n-\widehat{w}_{jh}^n)\|^2_{\partial\mathcal{T}_h}
		+C\|\nabla m_j^n\|^2_{\mathcal{T}_h}.
	\end{align*}
	For the term $R_7+R_8+R_9$, let $(\bm\gamma,v ,w ,\widehat v ,\widehat w )
	=(\overline{\bm\beta}^n-\bm\beta_j^n,2w_{jh}^{n-1}-w_{jh}^{n-2},2\widehat w_{jh}^{n-1}-\widehat w_{jh}^{n-2},
	w_{jh}^n,\widehat w_{jh}^n)$ in  \eqref{frequ}  to get
	\begin{align*}
		R_7+R_8+R_9&\le
		C\left(\|w_{jh}^n\|_{\mathcal{T}_h}^2+\|w_{jh}^{n-1}\|_{\mathcal{T}_h}^2+\|w_{jh}^{n-2}\|_{\mathcal{T}_h}^2+\|h_K^{1/2}(P_M w_{jh}^{n}-\widehat{w}_{jh}^n)\|_{\partial\mathcal{T}_h}^2
		\right)\nonumber\\
		&\quad+(\nabla\cdot[\bm{\Pi}_0(\overline{\bm\beta}^n-\bm{\beta}_j^n) (2w_{jh}^{n-1}
		-w_{jh}^{n-2})],w_{jh}^n)_{\mathcal{T}_h}\nonumber\\
		&\quad-\langle[\bm{\Pi}_0^o(\overline{\bm\beta}^n-\bm{\beta}_j^n)(2w_{jh}^{n-1}-w_{jh}^{n-2})]\cdot\bm n,\widehat{w}_{jh}^n\rangle_{\partial\mathcal{T}_h}\nonumber\\
		&\quad+
		\langle 
		(\overline{\bm\beta}^n-\bm\beta_j^n)\cdot\bm n(2w_{jh}^{n-1}-w_{jh}^{n-2}-2\widehat{w}_{jh}^{n-1}+\widehat{w}_{jh}^{n-2}),  w_{jh}^n-\widehat{w}_{jh}^n
		\rangle_{\partial\mathcal{T}_h}.
	\end{align*}
	Using \eqref{w-a} with $\bm r_j=\bm{\Pi}_0 (\overline{\bm\beta}^n-\bm{\beta}_j^n)(2w_{jh}^{n-1}
	-w_{jh}^{n-2})\in\bm V_h$, we get
	\begin{align*}
		R_7+R_8+R_9&\le C\left(
		\|w_{jh}^n\|_{\mathcal{T}_h}^2+\|w_{jh}^{n-1}\|_{\mathcal{T}_h}^2
		+\|w_{jh}^{n-2}\|_{\mathcal{T}_h}^2+\|h_K^{1/2}(P_M w_{jh}^{n}-\widehat{w}_{jh}^n)\|_{\partial\mathcal{T}_h}^2
		\right)\nonumber\\
		&\quad+(\overline c^n\bm{q}^{n}_{jh},\bm{\Pi}_0 (\overline{\bm\beta}^n-\bm{\beta}_j^n) (2w_{jh}^{n-1}
		-w_{jh}^{n-2}))_{\mathcal{T}_h}\nonumber\\
		&\quad-((\overline c^n-c_j^n)(2\bm{q}^{n-1}_{jh}-\bm{q}^{n-2}_{jh}),\bm{\Pi}_0(\overline{\bm\beta}^n-\bm{\beta}_j^n) (2w_{jh}^{n-1}
		-w_{jh}^{n-2}))_{\mathcal{T}_h}\nonumber\\
		&\quad+(\nabla m_j^n,\bm{\Pi}_0(\overline{\bm\beta}^n-\bm{\beta}_j^n) (2w_{jh}^{n-1}-w_{jh}^{n-2}))_{\mathcal{T}_h}\nonumber\\
		&\quad+
		\langle (\overline{\bm\beta}^n-\bm\beta_j^n)\cdot\bm n(2w_{jh}^{n-1}-w_{jh}^{n-2}-2\widehat{w}_{jh}^{n-1}+\widehat{w}_{jh}^{n-2}), w_{jh}^n-\widehat{w}_{jh}^n
		\rangle_{\partial\mathcal{T}_h}.
	\end{align*}
	For $h$ small enough, Cauchy-Schwarz inequality and  Young's inequality give
	\begin{align*}
		R_7+R_8+R_9 &\le \frac{1}{24(\kappa+1)} \left(\|\sqrt{\overline{c}^n}\bm q_{jh}^n\|_{\mathcal{T}_h}^2+
		\|\sqrt{\overline{c}^{n-1}}\bm q_{jh}^{n-1}\|_{\mathcal{T}_h}^2
		+	\|\sqrt{\overline{c}^{n-2}}\bm q_{jh}^{n-2}\|_{\mathcal{T}_h}^2\right) \\
		&\quad +C\left(\|w_{jh}^n\|_{\mathcal{T}_h}^2
		+\|w_{jh}^{n-1}\|_{\mathcal{T}_h}^2
		+\|w_{jh}^{n-2}\|_{\mathcal{T}_h}^2
		+\|\nabla m_j^n\|_{\mathcal{T}_h}^2\right)\\
		&\quad +\frac{1}{16}	\|h_K^{-1/2}(P_M w_{jh}^n-\widehat{w}_{jh}^n)\|_{\partial\mathcal{T}_h}^2.
	\end{align*}
	Using integration by parts and the estimate \eqref{euqal-w}, we have 
	\begin{align*}
		\hspace{1em}&\hspace{-1em} R_{10}+R_{11}+R_{12}\\
		&\le C\left( \|w_{jh}^n\|_{\mathcal{T}_h}+\|\nabla w_{jh}^n\|_{\mathcal{T}_h} \right)
		\left(
		\|\nabla m_j^{n-1}\|_{\mathcal{T}_h}
		+	\|\nabla m_j^{n-2}\|_{\mathcal{T}_h}
		\right)
		\\
		&\le \alpha \left(\|\nabla w^n_{jh}\|_{\mathcal{T}_h}^2
		+\|h_K^{-1/2}(P_M w^n_{jh}-\widehat{w}^n_{jh}\|_{\partial\mathcal{T}_h}^2\right)
		+C_{\alpha}(\|\nabla m_j^{n-1}\|_{\mathcal{T}_h}^2+\|\nabla m_j^{n-2}\|_{\mathcal{T}_h}^2)\\
		&\le C\alpha \left(\|\sqrt{\overline{c}^n}\bm q_{jh}^n\|_{\mathcal{T}_h}^2+
		\|\sqrt{\overline{c}^{n-1}}\bm q_{jh}^{n-1}\|_{\mathcal{T}_h}^2
		+\|\sqrt{\overline{c}^{n-2}}\bm q_{jh}^{n-2}\|_{\mathcal{T}_h}^2\right)\\
		&\quad+C\alpha\left(	\|h_K^{-1/2}(P_Mw_{jh}^n-\widehat{w}_{jh}^n)\|_{\partial\mathcal{T}_h}^2+\|\nabla m_j^n\|_{\mathcal{T}_h}^2\right)\\
		&\quad +C_{\alpha}(\|\nabla m_j^{n-1}\|_{\mathcal{T}_h}^2+\|\nabla m_j^{n-2}\|_{\mathcal{T}_h}^2).
	\end{align*}
	Choose $\alpha$ small enough,  we get
	\begin{align*}
		\hspace{1em}&\hspace{-1em} R_{10}+R_{11}+R_{12}\\
		&\le \frac{1}{24(\kappa+1)} \left(\|\sqrt{\overline{c}^n}\bm q_{jh}^n\|^2_{\mathcal{T}_h}+\|\sqrt{\overline{c}^{n-1}}\bm q_{jh}^{n-1}\|^2_{\mathcal{T}_h}+\|\sqrt{\overline{c}^{n-2}}\bm q_{jh}^{n-2}\|^2_{\mathcal{T}_h}\right)\nonumber\\
		&\quad+\frac{1}{16}	\|h_K^{-1/2}(P_M w_{jh}^n-\widehat{w}_{jh}^n)\|^2_{\partial\mathcal{T}_h}
		+C\left(\|\nabla m_j^n\|^2_{\mathcal{T}_h}
		+\|\nabla m_j^{n-1}\|^2_{\mathcal{T}_h}
		+\|\nabla m_j^{n-2}\|^2_{\mathcal{T}_h}
		\right).
	\end{align*}
	Similarity,  we have 
	\begin{align*}
		\hspace{1em}&\hspace{-1em} R_{13}+R_{14}+R_{15}\\
		&\le \frac{1}{24(\kappa+1)}\left(\|\sqrt{\overline{c}^n}\bm q_{jh}^n\|^2_{\mathcal{T}_h}+\|\sqrt{\overline{c}^{n-1}}\bm q_{jh}^{n-1}\|^2_{\mathcal{T}_h}+\|\sqrt{\overline{c}^{n-2}}\bm q_{jh}^{n-2}\|^2_{\mathcal{T}_h}\right)\nonumber\\
		&\quad+\frac{1}{16}	\|h_K^{-1/2}(P_M w_{jh}^n-\widehat{w}_{jh}^n)\|^2_{\partial\mathcal{T}_h}
		+C\left(\|\nabla m_j^n\|^2_{\mathcal{T}_h}
		+\|\nabla m_j^{n-1}\|^2_{\mathcal{T}_h}
		+\|\nabla m_j^{n-2}\|^2_{\mathcal{T}_h}
		\right),\\
		\hspace{1em}&\hspace{-1em} R_{16} \le C\|\nabla m_j^n\|^2_{\mathcal{T}_h}+\frac{1}{16}	\|h_K^{-1/2}(P_M w_{jh}^n-\widehat{w}_{jh}^n)\|^2_{\partial\mathcal{T}_h}.
	\end{align*}

	Therefore, by all the estimate above one gets
	\begin{align*}
		&\quad \frac{\|w_{jh}^n\|_{\mathcal{T}_h}^2+\|2w_{jh}^n-w_{jh}^{n-1}\|^2_{\mathcal{T}_h}-\|w_{jh}^{n-1}\|^2_{\mathcal{T}_h}-\|2w_{jh}^{n-1}-w_{jh}^{n-2}\|^2_{\mathcal{T}_h}}{4\Delta t}
		\nonumber\\
		&\quad+\frac{\|w_{jh}^n-2w_{jh}^{n-1}+w_{jh}^{n-2}\|^2_{\mathcal{T}_h}}{4\Delta t}+\|\sqrt{\overline{c}^n}\bm q_{jh}^n\|^2_{\mathcal{T}_h}
		+\frac{1}{2}\|h_K^{-1/2}(P_M w_{jh}^n-\widehat{w}_{jh}^n)\|^2_{\partial\mathcal{T}_h}\nonumber\\
		& \le  C(\|w_{jh}^n\|^2_{\mathcal{T}_h}
		+\|w_{jh}^{n-1}\|^2_{\mathcal{T}_h}
		+\|w_{jh}^{n-2}\|^2_{\mathcal{T}_h}
		+\Delta t^{-1} \|\partial_{t} m_j^n\|^2_{L^2(t_{n-2},t_n;L^2(\Omega))}
		)\nonumber\\
		&\quad+C\left(
		\|f_j^n\|^2_{\mathcal{T}_h}+\|\nabla m_j^n\|_{\mathcal{T}_h}^2
		+\|\nabla m_j^{n-1}\|_{\mathcal{T}_h}^2
		+\|\nabla m_j^{n-2}\|_{\mathcal{T}_h}^2
		\right)\nonumber\\
		&\quad+
		\frac{2\kappa+1}{6(\kappa+1)}
		\left(
		\|\sqrt{c^n}\bm q_{jh}^n\|^2_{\mathcal{T}_h}
		+
		\|\sqrt{c^{n-1}}\bm q_{jh}^{n-1}\|^2_{\mathcal{T}_h}
		+	\|\sqrt{c^{n-2}}\bm q_{jh}^{n-2}\|^2_{\mathcal{T}_h}\right)\\
		&\quad +\frac{1}{4} \|h_K^{-1/2}(P_M w^n_{jh}-\widehat{w}^n_{jh})\|^2_{\partial\mathcal{T}_h}
		+\frac{1}{4(\kappa+1)}\|\sqrt{c^n}\bm q_{jh}^n\|^2_{\mathcal{T}_h}.
	\end{align*}
	We add last inequality from $n=2$ to $n=N$,
	rearrange it, and  multiply $4\Delta t$ to get
	\begin{align*}
		&\max_{2\le n\le N} \|w_{jh}^n\|_{\mathcal{T}_h}^2  +\Delta t\sum_{n=2}^N \|\sqrt{\overline{c}^n}\bm q_{jh}^n\|^2_{\mathcal{T}_h}
		\le
		C\Delta t\sum_{n=2}^N\left(
		\|f_j^n\|^2_{\mathcal{T}_h}+\|\nabla m_j^n\|_{\mathcal{T}_h}^2
		\right)\nonumber\\
		&\qquad+ C
		\left(\|w_{jh}^0\|^2_{\mathcal{T}_h}
		+
		\|w_{jh}^1\|^2_{\mathcal{T}_h}
		+ \|\partial_{t} m_j\|^2_{L^2(0,T;L^2(\Omega))}
		+\Delta t\|\sqrt{\overline{c}^1}\bm q_{jh}^1\|^2_{\mathcal{T}_h}
		\right).
	\end{align*}
	Then the result followed by Gronwall's inequality.
\end{proof}

\section{Error Analysis}
\label{errroanalysis}
The strategy of the  error analysis for the  Ensemble HDG method is based on \cite{ChenCockburnSinglerZhang1}. Throughout, we assume the data, the solutions of \eqref{convection_pde} are smooth enough and the domain $\Omega$ is convex.

\subsection{HDG elliptic projection}
\label{HDGellipticprojection}
For any $t\in[0,T]$ and $j=1,2,\ldots, J$,  let $(\overline{\bm q}_{jh},\overline{u}_{jh},\widehat{\overline u}_{jh})\in \bm V_h\times W_h\times M_h(g_j)$ be the solution of the following steady state problem
\begin{subequations}\label{projection}
	\begin{align}
		(c_j \overline{\bm{q}}_{jh},\bm{r}_j)_{\mathcal{T}_h}-(\overline u _{jh},\nabla\cdot \bm{r}_j)_{\mathcal{T}_h}+\langle\widehat{\overline u} _{jh},\bm{r}_j\cdot \bm n \rangle_{\partial{\mathcal{T}_h}} =0, \label{full_discretion_ensemble_steady1_a}
	\end{align}
	\begin{align}\label{full_discretion_ensemble_steady1_b}
		\begin{split}
			&\quad (\nabla\cdot\overline{\bm{q}} _{jh}, w_j)_{\mathcal{T}_h}-\langle \overline{\bm{q}} _{jh}\cdot \bm{n},\mu_j\rangle_{\partial{\mathcal{T}_h}}
			-( \nabla\cdot{\bm \beta}_j \overline{u}_{jh}, w_j)_{\mathcal{T}_h}-( {\bm \beta}_j \overline{u}_{jh},   \nabla w_j)_{\mathcal{T}_h}
			\\
			&\quad +\langle{\bm \beta}_j\cdot\bm n, \widehat{\overline{u}}_{jh} w_j\rangle_{\partial\mathcal{T}_h}+\langle h_K^{-1} (P_M \overline u _{jh}-{\widehat{\overline{u}}} _{jh}), P_M w_j-\mu_j \rangle_{\partial\mathcal{T}_h}\\
			&=(f _j-\partial_t u_j ,w_j)_{\mathcal{T}_h}
		\end{split}
	\end{align}
\end{subequations}
for all $(\bm r_j,w_j,\mu_j)\in \bm V_h\times W_h\times M_h(0)$.

The proofs of the following estimations are presented
in  \Cref{app}.
\begin{theorem}\label{error_dual}
	For any $t\in[0,T]$ and $j=1,2\cdots, J$, we have
	\begin{align*}
		\|\bm q_j- \overline{\bm q}_{jh}\|_{\mathcal{T}_h} \le  C h^{k+1},  \ \  \|\partial_t\bm q_j- \partial_t\overline{\bm q}_{jh}\|_{\mathcal{T}_h} \le C h^{k+1},  \ \  	\|\partial_{tt}\bm q_j- \partial_{tt}\overline{\bm q}_{jh}\|_{\mathcal{T}_h}  \le  C h^{k+1},\\
		\|u_j- \overline{u}_{jh}\|_{\mathcal{T}_h} \le  C h^{k+2},  \ \  \|\partial_t u_j- \partial_t\overline{u}_{jh}\|_{\mathcal{T}_h} \le C h^{k+2},  \ \  	\|\partial_{tt} u_j- \partial_{tt}\overline{u}_{jh}\|_{\mathcal{T}_h}  \le  C h^{k+2},
	\end{align*}
	and 
	\begin{align*}
		\|h_K^{1/2}( \overline{u}_{jh}-\widehat{\overline u}_{jh})\|_{\partial\mathcal{T}_h} \le Ch^{k+1},\quad \|h_K^{1/2}( \partial_{tt}\overline{u}_{jh}-\partial_{tt}\widehat{\overline u}_{jh})\|_{\partial\mathcal{T}_h} \le Ch^{k+1}.
	\end{align*}
\end{theorem}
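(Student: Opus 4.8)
The plan is to run a projection-based error analysis in the spirit of \cite{ChenCockburnSinglerZhang1}: an energy (primal) estimate delivers the $O(h^{k+1})$ flux and stabilization bounds, an Aubin--Nitsche duality argument upgrades the scalar error to the superconvergent $O(h^{k+2})$, and the time-derivative estimates follow by differentiating \eqref{projection} in $t$ and bootstrapping. First I would introduce the standard splitting through the $L^2$ projections of \eqref{L2}: $\bm q_j-\overline{\bm q}_{jh}=(\bm q_j-\bm\Pi_k\bm q_j)+(\bm\Pi_k\bm q_j-\overline{\bm q}_{jh})=:\bm\delta^{\bm q}+\bm\varepsilon^{\bm q}$, $u_j-\overline u_{jh}=:\delta^u+\varepsilon^u$, and likewise on the skeleton with $P_M$. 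Because $(\bm q_j,u_j)$ solves \eqref{convection_pde} and $f_j-\partial_t u_j$ is precisely the steady residual, the scheme \eqref{projection} is consistent; subtracting it from the exact equations tested against $(\bm r_j,w_j,\mu_j)\in\bm V_h\times W_h\times M_h(0)$ and invoking the orthogonality \eqref{L2_do}--\eqref{L2_edge} should cancel most projection residuals, leaving error equations of the same algebraic form as \eqref{full_discretion_ensemble_steady1_a}--\eqref{full_discretion_ensemble_steady1_b} for the discrete errors $(\bm\varepsilon^{\bm q},\varepsilon^u,\varepsilon^{\widehat u})$, with right-hand sides controlled by $\bm\delta^{\bm q}$, $\delta^u$ and the coefficients.

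Next I would take $(\bm r_j,w_j,\mu_j)=(\bm\varepsilon^{\bm q},\varepsilon^u,\varepsilon^{\widehat u})$ and add the error equations. The diffusion term yields $\|\sqrt{c_j}\,\bm\varepsilon^{\bm q}\|_{\mathcal T_h}^2$ and the stabilization yields $\|h_K^{-1/2}(P_M\varepsilon^u-\varepsilon^{\widehat u})\|_{\partial\mathcal T_h}^2$, while the convection contribution is exactly of the type handled by \Cref{VV} and \eqref{frequ}: it is absorbed at the cost of a term $Ch\|\nabla\varepsilon^u\|_{\mathcal T_h}^2$, which is in turn controlled via \Cref{ub} and \Cref{poincare} by the flux and jump norms already on the left. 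This is where the modified stabilization \eqref{flux} (no upwind term) is essential, so that \Cref{VV} provides uniform coercivity down to $k=0$. Bounding the right-hand side with \Cref{lemmainter} then gives $\|\bm\varepsilon^{\bm q}\|_{\mathcal T_h}+\|h_K^{-1/2}(P_M\varepsilon^u-\varepsilon^{\widehat u})\|_{\partial\mathcal T_h}\le Ch^{k+1}$, and a triangle inequality with $\|\bm\delta^{\bm q}\|_{\mathcal T_h}\le Ch^{k+1}$ produces the stated flux estimate. The trace bound $\|h_K^{1/2}(\overline u_{jh}-\widehat{\overline u}_{jh})\|_{\partial\mathcal T_h}\le Ch^{k+1}$ follows by writing $\overline u_{jh}-\widehat{\overline u}_{jh}=(\overline u_{jh}-P_M\overline u_{jh})+(P_M\varepsilon^u-\varepsilon^{\widehat u})-P_M\delta^u$ and estimating each piece with \Cref{lemmainter}, the inverse inequality \eqref{inverse_esti}, and the energy bound just obtained.

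To obtain the scalar superconvergence I would use duality. Introduce the adjoint convection–diffusion problem $c_j\bm\phi+\nabla\psi=0$ in $\Omega$, $-\nabla\cdot\bm\phi-\nabla\cdot(\bm\beta_j\psi)=\varepsilon^u$ in $\Omega$, $\psi=0$ on $\partial\Omega$, which is well posed because $\nabla\cdot\bm\beta_j\le0$ and, by convexity of $\Omega$, enjoys the elliptic regularity $\|\bm\phi\|_{1}+\|\psi\|_{2}\le C\|\varepsilon^u\|_{\mathcal T_h}$. Testing the error equations against the HDG projection of $(\bm\phi,\psi)$ and integrating by parts rewrites $\|\varepsilon^u\|_{\mathcal T_h}^2$ as a sum of products of projection residuals, each of which gains one extra power of $h$ from \Cref{lemmainter} beyond the $O(h^{k+1})$ flux and jump bounds of the previous step; this yields $\|\varepsilon^u\|_{\mathcal T_h}\le Ch^{k+2}$ and hence the claimed $O(h^{k+2})$ scalar estimate.

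Finally, for the $\partial_t$ and $\partial_{tt}$ estimates I would differentiate \eqref{projection} in $t$. Since $c_j$ and $\bm\beta_j$ depend on time, the chain rule produces additional lower-order terms in $\partial_t c_j$, $\partial_t\bm\beta_j$ (and their second derivatives) acting on $\overline{\bm q}_{jh}$, $\overline u_{jh}$ and $\partial_t\overline{\bm q}_{jh}$; the differentiated systems share the principal structure of \eqref{full_discretion_ensemble_steady1_a}--\eqref{full_discretion_ensemble_steady1_b}, so the energy and duality machinery applies verbatim to $(\partial_t\bm\varepsilon^{\bm q},\partial_t\varepsilon^u,\ldots)$ and then to the second derivatives, with the extra terms absorbed in a bootstrap using the already-established lower-order estimates. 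I expect the main obstacle to be exactly this duality and bootstrapping step: one must arrange the error equations so that the convection terms and the time-dependent-coefficient terms do not destroy the single extra power of $h$ in the Aubin--Nitsche argument, and verify that \Cref{VV} continues to supply the coercivity uniformly in $k$ (including $k=0$) for each differentiated problem. Throughout, \Cref{lemmainter}, \Cref{VV}, \Cref{ub} and \Cref{poincare} serve as the workhorses.
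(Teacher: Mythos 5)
Your proposal follows essentially the same route as the paper's appendix proof: the same projection splitting via $\bm\Pi_k$, $\Pi_{k+1}$, $P_M$, the same consistency-based error equation, an energy argument with \Cref{VV} absorbing the convection term and yielding the $O(h^{k+1})$ flux and jump bounds, an Aubin--Nitsche duality argument under convexity for the $O(h^{k+2})$ scalar estimate, differentiation of \eqref{projection} in $t$ for the time-derivative bounds, and a triangle inequality for the trace estimate. The only cosmetic differences are that the paper packages the bilinear forms into operators $\mathscr B_j$, $\mathscr C_j$ and uses a dedicated lemma (the analogue of \Cref{ub} for the projected errors) to control $\|\nabla\varepsilon^u\|_{\mathcal T_h}$, neither of which changes the substance.
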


\subsection{Main result}
We can now state our main result for the ensemble HDG method.
\begin{theorem}\label{main_theorem}
	If  the condition \eqref{cond-c} holds and the domain is convex, then we have the following error estimate
	\begin{align}\label{01}
		\max_{1\le n\le N}
		\|u^n_j-u^n_{jh}\|_{\mathcal{T}_h}\le C\left(\Delta t^2+h^{k+2}\right),\\
		\sqrt{\Delta t\sum_{n=1}^N\|\sqrt{\overline c^n}(\bm q_j^n-\bm q_{jh}^n)\|^2_{\mathcal{T}_h}}\le C\left(\Delta t^2+h^{k+1}\right).
	\end{align}
\end{theorem}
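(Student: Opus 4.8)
The plan is to run the standard projection-based HDG error analysis, but to drive it through the very stability machinery already assembled in \Cref{stability_HDG_lemma}. First I would introduce the HDG elliptic projection $(\overline{\bm q}_{jh}^n,\overline u_{jh}^n,\widehat{\overline u}_{jh}^n)$ from \eqref{projection} evaluated at each $t_n$ and split
\[
\bm q_j^n-\bm q_{jh}^n=(\bm q_j^n-\overline{\bm q}_{jh}^n)+\bm\varepsilon_{\bm q}^n,\qquad u_j^n-u_{jh}^n=(u_j^n-\overline u_{jh}^n)+\varepsilon_u^n,
\]
where $\bm\varepsilon_{\bm q}^n:=\overline{\bm q}_{jh}^n-\bm q_{jh}^n$, $\varepsilon_u^n:=\overline u_{jh}^n-u_{jh}^n$, $\varepsilon_{\widehat u}^n:=\widehat{\overline u}_{jh}^n-\widehat u_{jh}^n$. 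Since both $\widehat{\overline u}_{jh}^n$ and $\widehat u_{jh}^n$ lie in $M_h(g_j)$, the trace error $\varepsilon_{\widehat u}^n$ vanishes on $\mathcal E_h^\partial$; hence, unlike the stability proof, no auxiliary lifting $m_j$ is needed. The projection parts are controlled by \Cref{error_dual} at rates $h^{k+1}$ and $h^{k+2}$, so by the triangle inequality the theorem reduces to bounding $\max_n\|\varepsilon_u^n\|_{\mathcal T_h}$ and $\Delta t\sum_n\|\sqrt{\overline c^n}\bm\varepsilon_{\bm q}^n\|^2_{\mathcal T_h}$.

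Next I would derive the error equations for $(\bm\varepsilon_{\bm q}^n,\varepsilon_u^n,\varepsilon_{\widehat u}^n)$. Rewriting \eqref{full_discretion_ensemble_steady1_a} at $t_n$ with $\overline c^n$ in place of $c_j^n$ (moving $(\overline c^n-c_j^n)\overline{\bm q}_{jh}^n$ to the right) and subtracting \eqref{full_discretion_ensemble_a}, the first error equation carries the right-hand side $((\overline c^n-c_j^n)[\overline{\bm q}_{jh}^n-(2\bm q_{jh}^{n-1}-\bm q_{jh}^{n-2})],\bm r_j)_{\mathcal T_h}$. I would then use the decomposition
\[
\overline{\bm q}_{jh}^n-2\bm q_{jh}^{n-1}+\bm q_{jh}^{n-2}=\big(\overline{\bm q}_{jh}^n-2\overline{\bm q}_{jh}^{n-1}+\overline{\bm q}_{jh}^{n-2}\big)+\big(2\bm\varepsilon_{\bm q}^{n-1}-\bm\varepsilon_{\bm q}^{n-2}\big),
\]
whose first group is a second difference of the projected flux, hence $O(\Delta t^2)$ by \eqref{es-parttial-02} together with the bound on $\partial_{tt}\overline{\bm q}_{jh}$ from \Cref{error_dual}, while the second group couples to previous-step flux errors. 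Subtracting \eqref{full_discretion_ensemble_b} from \eqref{full_discretion_ensemble_steady1_b} gives the second error equation, whose forcing consists of: (i) the consistency term $(\partial_t^+\overline u_{jh}^n-\partial_t u_j^n,w_j)_{\mathcal T_h}$, which I split into $\partial_t^+$ of the scalar projection error (bounded after summation by $\|\partial_t(u_j-\overline u_{jh})\|^2_{L^2(0,T;L^2)}=O(h^{2k+4})$ via \eqref{es-parttial-01} and \Cref{error_dual}) plus the BDF2 truncation $\partial_t^+u_j^n-\partial_t u_j^n$ controlled by \eqref{es-parttial-03}; and (ii) the explicit ensemble corrections in $(\overline{\bm\beta}^n-\bm\beta_j^n)$, handled by the same second-difference-plus-error splitting.

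With the error equations in hand, the core of the argument is to repeat the proof of \Cref{stability_HDG_lemma} verbatim on this manufactured right-hand side. Testing with $(\bm r_j,w_j,\widehat v_j)=(\bm\varepsilon_{\bm q}^n,\varepsilon_u^n,\varepsilon_{\widehat u}^n)$, I would apply the telescoping identity of \Cref{id} to the time term, the coercivity estimate \eqref{V1} of \Cref{VV} with $\bm\gamma=\overline{\bm\beta}^n$ to the convection–stabilization block, and absorb the explicit $c$-terms via condition \eqref{cond-c} in its sharpened form \eqref{condition-cc}, exactly as in the treatment of $R_1$. The convection correction terms involving $(\overline{\bm\beta}^n-\bm\beta_j^n)$ and the previous-step errors $2\bm\varepsilon_{\bm q}^{n-1}-\bm\varepsilon_{\bm q}^{n-2}$, $2\varepsilon_u^{n-1}-\varepsilon_u^{n-2}$ are dealt with as $R_7$ through $R_{15}$ were, reinserting the first error equation to convert divergence-type terms into flux quantities. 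Summing from $n=2$ to $N$, multiplying by $4\Delta t$, and invoking the discrete Gronwall inequality yields
\[
\max_{2\le n\le N}\|\varepsilon_u^n\|_{\mathcal T_h}^2+\Delta t\sum_{n=2}^N\|\sqrt{\overline c^n}\bm\varepsilon_{\bm q}^n\|^2_{\mathcal T_h}\le C\big(\Delta t^4+h^{2k+4}\big),
\]
after checking that the start-up quantities at $n=0,1$ are themselves $O(\Delta t^2+h^{k+2})$ (the $n=0$ data is the projection of $u^0$, and the single first-order step producing $n=1$ has local error $O(\Delta t^2)$). The triangle inequality with \Cref{error_dual} then delivers the two stated rates, the scalar estimate inheriting $h^{k+2}$ and the flux estimate $h^{k+1}$.

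The hard part will be the bookkeeping of the explicit ensemble perturbations: one must verify that after the consistency/projection-error/previous-step splitting, the coefficients multiplying the dissipation norms $\|\sqrt{\overline c}\bm\varepsilon_{\bm q}\|^2$ across the three time levels $n,n-1,n-2$ sum, upon telescoping, to a value strictly below the coercive contribution on the left — this is precisely what the factor $\tfrac13$ in \eqref{cond-c} and the margin $\kappa$ in \eqref{condition-cc} guarantee, and where the analysis is most delicate. A secondary subtlety is preserving the asymmetric rates: the scalar bound $h^{k+2}$ must not be contaminated by the flux projection error, which forces one to use the superconvergent $u$-estimate and the $\partial_t u_j$-shifted source of the elliptic projection \eqref{full_discretion_ensemble_steady1_b} so that the leading spatial residual in the $\varepsilon_u$ equation is genuinely of order $h^{k+2}$ rather than $h^{k+1}$.
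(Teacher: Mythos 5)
Your proposal follows essentially the same route as the paper: decompose via the HDG elliptic projection of \Cref{HDGellipticprojection}, derive the error equations by subtracting \eqref{projection} from \eqref{full_discretion_ensemble}, split the lagged terms into second differences of the projection (order $\Delta t^2$ via \eqref{es-parttial-02}) plus previous-step errors, run the energy argument with \Cref{id}, \Cref{VV}, condition \eqref{condition-cc} and reinsertion of the first error equation, then Gronwall and the triangle inequality with \Cref{error_dual}. This matches the paper's \Cref{main_error_equations1} and \Cref{intermed} in all essentials, including the observation that the boundary traces cancel so no lifting is needed.
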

\begin{remark} 
	To the best of our knowledge, all previous works only contain a \emph{suboptimal} $L^\infty(0,T;\\L^2(\Omega))$ convergent rate for the ensemble solutions $u_j$.  Only one other very recent work \cite{PiChenZhangXu1} contains  an \emph{optimal} $L^\infty(0,T;L^2(\Omega))$ convergent rate for the ensemble solutions $u_j$,  and  a $L^2(0,T;L^2(\Omega))$  superconvergent rate if the coefficients of the PDEs are independent of time and degree polynomial $k\ge 1$;  our main result: \Cref{main_theorem} is the \emph{first} time to obtain the  $L^\infty(0,T;L^2(\Omega))$ {supconvergent} rate for the ensemble solutions $u_j$ for all $k\ge 0$ and  without assume that the coefficients of the PDEs are independent of time. It is also the \emph{first} time to obtain the superonvergent rate for a single convection diffusion PDE when $k=0$.
\end{remark}

\subsection{Proof of \Cref{main_theorem}}
The proof of \eqref{01}  with $n=1$ is quite standard in backward Euler discretization, thus we omit it, and we prove \eqref{01} holds for all $n\geq 2$.
\subsubsection{The equations of the projection of the errors}
\begin{lemma} \label{main_error_equations1}
	For  $ e^{\bm q^n}_{jh}=\bm q_{jh}^n-{\overline{\bm q}}_{jh}^n, \ \  e^{u^n}_{jh}=u_{jh}^n-{\overline{u}}_{jh}^n, \ \  
	e^{\widehat{u}^n}_{jh}=\widehat{u}_{jh}^n-{{\widehat{\overline{u}}}}_{jh}^n$, for all $j = 1,2\cdots, J$, we have  the following error equations
	\begin{subequations}\label{error1}
		\begin{align}
			\begin{split}
				(\overline c^ne_{jh}^{\bm q^n},\bm{r}_j)_{\mathcal{T}_h}
				-(e^{u^n}_{jh},\nabla\cdot \bm{r}_j)_{\mathcal{T}_h}+\langle e^{\widehat{u}^n}_{jh},\bm{r}_j\cdot \bm n \rangle_{\partial{\mathcal{T}_h}} =
				( ( \overline c^n-c_j^n)(2\bm{q}^{n-1}_{jh}-\bm{q}^{n-2}_{jh}-
				\overline{\bm{q}}^{n}_{jh}),\bm{r}_j)_{\mathcal{T}_h},
				\label{error_a}
			\end{split}
		\end{align}
		\begin{align}\label{errorb}
			\begin{split}
				\hspace{1em}&\hspace{-1em} (\partial^+_te^{u^n}_{jh},w_j)_{\mathcal T_h}+(\nabla\cdot e_{jh}^{\bm q^n}, w_j)_{\mathcal{T}_h}-\langle e^{\bm{q}^n}_{jh}\cdot \bm{n},\mu_j\rangle_{\partial{\mathcal{T}_h}}
				-(\nabla\cdot \overline{\bm \beta}^n e^{u^n}_{jh}, w_j)_{\mathcal{T}_h}\\
				& \quad -( \overline{\bm \beta}^ne^{u^n}_{jh}, \nabla w_j)_{\mathcal{T}_h}+\langle \overline{\bm \beta}^n\cdot\bm n, e^{{\widehat u}^n}_{jh} w_j\rangle_{\partial\mathcal{T}_h} +\langle h_K^{-1} ( P_M e^{u^n}_{jh}-e^{\widehat{u}^n}_{jh}),
				P_M w_j-\mu_j \rangle_{\partial\mathcal{T}_h}\\
				&=-( \nabla\cdot(\overline{\bm \beta}^n-\bm{\beta}_j^n) 
				(2{u}_{jh}^{n-1}-{u}_{jh}^{n-2}-\overline{u}^n_{jh})
				, w_j)_{\mathcal{T}_h} -( (\overline{\bm \beta}^n-\bm{\beta}_j^n) 
				(2{u}_{jh}^{n-1}-{u}_{jh}^{n-2}-\overline{u}^n_{jh})
				, \nabla w_j)_{\mathcal{T}_h}\\
				&\quad+\langle (\overline{\bm \beta}^n_j-\bm{\beta}_j^n)\cdot\bm n, (2{	\widehat u}_{jh}^{n-1}-{\widehat u}_{jh}^{n-2}-	\widehat{\overline{ u}}^n_{jh})
				w_j\rangle_{\partial\mathcal{T}_h}
				+(\partial_tu_j^n-\partial_t^+\overline{u}^n_{jh},w_j)_{\mathcal{T}_h}
			\end{split}
		\end{align}
	\end{subequations}
	for all $(\bm r_j,w_j,\mu_j)\in \bm V_h\times W_h\times M_h(0)$ and $n=1,2,\cdots,N$.
\end{lemma}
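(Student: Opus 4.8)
The plan is to obtain both error equations by subtracting the defining equations of the HDG elliptic projection \eqref{projection}, evaluated at the time level $t_n$, from the fully discrete ensemble equations \eqref{full_discretion_ensemble}, and then regrouping the resulting terms so that the error quantities $e^{\bm q^n}_{jh}$, $e^{u^n}_{jh}$, $e^{\widehat u^n}_{jh}$ appear on the left while all remaining contributions are collected on the right. Since both systems are linear and posed against the same test spaces $\bm V_h\times W_h\times M_h(0)$, the entire derivation is a bookkeeping exercise; the only nontrivial point is the consistent treatment of the coefficients, where the discrete system uses the ensemble means $\overline c^n,\overline{\bm\beta}^n$ but the projection uses the individual $c_j^n,\bm\beta_j^n$.

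For \eqref{error_a} I would subtract \eqref{full_discretion_ensemble_steady1_a} from \eqref{full_discretion_ensemble_a}. The divergence and trace terms cancel immediately into $-(e^{u^n}_{jh},\nabla\cdot\bm r_j)_{\mathcal{T}_h}$ and $\langle e^{\widehat u^n}_{jh},\bm r_j\cdot\bm n\rangle_{\partial\mathcal{T}_h}$. The diffusion-coefficient mismatch is resolved by the splitting
\[
(\overline c^n \bm q_{jh}^n - c_j^n \overline{\bm q}_{jh}^n, \bm r_j)_{\mathcal{T}_h} = (\overline c^n e^{\bm q^n}_{jh}, \bm r_j)_{\mathcal{T}_h} + ((\overline c^n - c_j^n)\overline{\bm q}_{jh}^n, \bm r_j)_{\mathcal{T}_h},
\]
keeping the first piece on the left and transferring the second to the right, where it merges with the existing correction $((\overline c^n - c_j^n)(2\bm q_{jh}^{n-1} - \bm q_{jh}^{n-2}),\bm r_j)_{\mathcal{T}_h}$ to produce exactly the stated right-hand side with the combination $2\bm q_{jh}^{n-1} - \bm q_{jh}^{n-2} - \overline{\bm q}_{jh}^n$.

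For \eqref{errorb} I would subtract \eqref{full_discretion_ensemble_steady1_b} from \eqref{full_discretion_ensemble_b}. The flux-divergence, flux-trace, and stabilization terms cancel directly into $(\nabla\cdot e^{\bm q^n}_{jh},w_j)_{\mathcal{T}_h}$, $-\langle e^{\bm q^n}_{jh}\cdot\bm n,\mu_j\rangle_{\partial\mathcal{T}_h}$, and $\langle h_K^{-1}(P_M e^{u^n}_{jh}-e^{\widehat u^n}_{jh}),P_M w_j-\mu_j\rangle_{\partial\mathcal{T}_h}$. Each of the three convection contributions is handled by the same \emph{add-and-subtract} device, inserting the crossed term in which $\overline{\bm\beta}^n$ acts on the projection variable; for the divergence term, for example,
\[
-((\nabla\cdot\overline{\bm\beta}^n)u_{jh}^n, w_j)_{\mathcal{T}_h} + ((\nabla\cdot\bm\beta_j^n)\overline u_{jh}^n, w_j)_{\mathcal{T}_h} = -((\nabla\cdot\overline{\bm\beta}^n)e^{u^n}_{jh}, w_j)_{\mathcal{T}_h} - ((\nabla\cdot(\overline{\bm\beta}^n - \bm\beta_j^n))\overline u_{jh}^n, w_j)_{\mathcal{T}_h},
\]
and analogously for the $(\overline{\bm\beta}^n u,\nabla w)_{\mathcal{T}_h}$ and $\langle\overline{\bm\beta}^n\cdot\bm n\,\widehat u,w\rangle_{\partial\mathcal{T}_h}$ terms. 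The clean $e^{u^n}_{jh}$ and $e^{\widehat u^n}_{jh}$ pieces remain on the left, while each leftover $(\overline{\bm\beta}^n-\bm\beta_j^n)$ term is moved to the right and combined with its counterpart among the ensemble corrections of \eqref{full_discretion_ensemble_b}, which collapses $2u_{jh}^{n-1}-u_{jh}^{n-2}$ into $2u_{jh}^{n-1}-u_{jh}^{n-2}-\overline u_{jh}^n$ (and likewise for the numerical traces).

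The temporal term is treated last by writing $\partial_t^+ u_{jh}^n = \partial_t^+ e^{u^n}_{jh} + \partial_t^+ \overline u_{jh}^n$: I keep $(\partial_t^+ e^{u^n}_{jh},w_j)_{\mathcal{T}_h}$ on the left and send $(\partial_t^+ \overline u_{jh}^n,w_j)_{\mathcal{T}_h}$ to the right, where it combines with the surviving $(\partial_t u_j^n,w_j)_{\mathcal{T}_h}$ arising from $(f_j^n,w_j)_{\mathcal{T}_h}-(f_j^n-\partial_t u_j^n,w_j)_{\mathcal{T}_h}$ to give $(\partial_t u_j^n - \partial_t^+ \overline u_{jh}^n,w_j)_{\mathcal{T}_h}$, matching the final term of \eqref{errorb}. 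The argument holds for every test triple $(\bm r_j,w_j,\mu_j)$ and every $n$, so no wellposedness or regularity input is needed here. I expect the only real obstacle to be disciplined sign-tracking: the three crossed-term insertions must be carried out so that the projection variables $\overline{\bm q}_{jh}^n$ and $\overline u_{jh}^n$ enter each right-hand side with precisely the right coefficient, which is what makes the shifted data reduce to the stated error combinations.
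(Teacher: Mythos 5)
Your proposal is correct and is exactly the paper's argument: the paper proves \Cref{main_error_equations1} in one line by subtracting the elliptic projection system \eqref{projection} (evaluated at $t_n$) from the ensemble system \eqref{full_discretion_ensemble}, and your splittings of the coefficient-mismatch terms (e.g.\ $\overline c^n\bm q_{jh}^n-c_j^n\overline{\bm q}_{jh}^n=\overline c^n e^{\bm q^n}_{jh}+(\overline c^n-c_j^n)\overline{\bm q}_{jh}^n$ and the analogous add-and-subtract for the three convection terms and the time derivative) are precisely the bookkeeping that makes that subtraction yield the stated right-hand sides. No gaps.
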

The proof of \Cref{main_error_equations1} follows  by  subtracting \Cref{projection} from \Cref{hdg}.

\subsubsection{Energy argument}

We take $\bm r_j=\nabla e_{jh}^{u^n}$ in \eqref{error_a} and use integration by parts to get the following lemma.
\begin{lemma}
	We have 
	\begin{align}\label{equal-uq}
		\begin{split}
			\|\nabla e_{jh}^{u^n}\|_{\mathcal{T}_h}+\|h_K^{-1/2} (e_{jh}^{u^n}-e_{jh}^{\widehat{u}^n})\|_{\partial\mathcal{T}_h} &\le
			C\left( \|\sqrt{\overline c^n}e_{jh}^{\bm q^n}\|_{\mathcal{T}_h}
			+\|h_K^{-1/2}(P_M e_{jh}^{u^n}-e_{jh}^{\widehat{u}^n})\|_{\partial\mathcal{T}_h}\right)\\
			&\quad +C\|( \overline c^n-c_j^n)(2\bm{q}^{n-1}_{jh}-\bm{q}^{n-2}_{jh}-
			\overline{\bm{q}}^{n}_{jh})\|_{\mathcal{T}_h}.
		\end{split}
	\end{align}
\end{lemma}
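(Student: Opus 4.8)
The plan is to test the first error equation \eqref{error_a} with $\bm r_j = \nabla e_{jh}^{u^n}$, which is a legitimate choice since $e_{jh}^{u^n}\in W_h$ is a piecewise polynomial of degree $k+1$, so its elementwise gradient lies in $[\mathcal{P}^k(K)]^d\subset \bm V_h$. Applying integration by parts element-by-element to the term $-(e_{jh}^{u^n},\nabla\cdot\bm r_j)_{\mathcal{T}_h}$ turns it into $\|\nabla e_{jh}^{u^n}\|_{\mathcal{T}_h}^2 - \langle e_{jh}^{u^n}, \nabla e_{jh}^{u^n}\cdot\bm n\rangle_{\partial\mathcal{T}_h}$, so that \eqref{error_a} rearranges to the identity
\begin{align*}
\|\nabla e_{jh}^{u^n}\|_{\mathcal{T}_h}^2 &= -(\overline c^n e_{jh}^{\bm q^n},\nabla e_{jh}^{u^n})_{\mathcal{T}_h} + \langle e_{jh}^{u^n}-e_{jh}^{\widehat u^n}, \nabla e_{jh}^{u^n}\cdot\bm n\rangle_{\partial\mathcal{T}_h}\\
&\quad + ((\overline c^n-c_j^n)(2\bm q_{jh}^{n-1}-\bm q_{jh}^{n-2}-\overline{\bm q}_{jh}^n),\nabla e_{jh}^{u^n})_{\mathcal{T}_h}.
\end{align*}
The crucial observation is that on each face $\nabla e_{jh}^{u^n}\cdot\bm n$ is a polynomial of degree $k$, so by the defining property \eqref{L2_edge} of $P_M$ I may replace $e_{jh}^{u^n}$ by $P_M e_{jh}^{u^n}$ in the boundary term without changing its value, exactly as in the proof of \Cref{ub}.

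Next I would estimate each term on the right by Cauchy--Schwarz. The volume terms are controlled by $\|\nabla e_{jh}^{u^n}\|_{\mathcal{T}_h}$ times $\|\sqrt{\overline c^n}e_{jh}^{\bm q^n}\|_{\mathcal{T}_h}$ (using $c_0<c_j\in L^\infty$, so $\overline c^n$ is bounded above and below) and times the data term $\|(\overline c^n-c_j^n)(2\bm q_{jh}^{n-1}-\bm q_{jh}^{n-2}-\overline{\bm q}_{jh}^n)\|_{\mathcal{T}_h}$. For the boundary term, a Cauchy--Schwarz split together with the local inverse inequality \eqref{inverse_esti} applied componentwise to $\nabla e_{jh}^{u^n}\in\bm V_h$ gives $\|h_K^{1/2}\nabla e_{jh}^{u^n}\cdot\bm n\|_{\partial\mathcal{T}_h}\le C\|\nabla e_{jh}^{u^n}\|_{\mathcal{T}_h}$, so the boundary term is bounded by $C\|h_K^{-1/2}(P_M e_{jh}^{u^n}-e_{jh}^{\widehat u^n})\|_{\partial\mathcal{T}_h}\|\nabla e_{jh}^{u^n}\|_{\mathcal{T}_h}$. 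Dividing through by $\|\nabla e_{jh}^{u^n}\|_{\mathcal{T}_h}$ yields the asserted bound for $\|\nabla e_{jh}^{u^n}\|_{\mathcal{T}_h}$.

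It remains to control the extra quantity $\|h_K^{-1/2}(e_{jh}^{u^n}-e_{jh}^{\widehat u^n})\|_{\partial\mathcal{T}_h}$, where the trace $e_{jh}^{u^n}$ appears \emph{without} the projection $P_M$. I would split it via the triangle inequality as $e_{jh}^{u^n}-e_{jh}^{\widehat u^n}=(e_{jh}^{u^n}-P_M e_{jh}^{u^n})+(P_M e_{jh}^{u^n}-e_{jh}^{\widehat u^n})$; the second piece is already one of the target terms. The first piece is where the genuine work lies, and I expect it to be the main obstacle: since $e_{jh}^{u^n}$ has degree $k+1$ while $P_M$ reproduces only degree $k$, this difference does not vanish and must be absorbed into $\|\nabla e_{jh}^{u^n}\|_{\mathcal{T}_h}$. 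The key local estimate is $\|h_K^{-1/2}(w-P_M w)\|_{\partial K}\le C\|\nabla w\|_K$ for $w\in\mathcal{P}^{k+1}(K)$. I would prove it by subtracting the element mean $\bar w_K$ (which $P_M$ leaves invariant, being a constant), using that the facewise $L^2$ projection is a contraction to get $\|w-P_M w\|_{\partial K}\le\|w-\bar w_K\|_{\partial K}$, and then combining the trace inequality with the Poincar\'e inequality $\|w-\bar w_K\|_K\le Ch_K\|\nabla w\|_K$. Feeding this bound and the already-established estimate for $\|\nabla e_{jh}^{u^n}\|_{\mathcal{T}_h}$ back into the triangle inequality completes the proof.
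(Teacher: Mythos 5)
Your proposal is correct and follows essentially the same route as the paper, whose entire proof is the one line ``take $\bm r_j=\nabla e_{jh}^{u^n}$ in \eqref{error_a} and use integration by parts'' (mirroring the proof of the earlier bound for $\|\nabla u_{jh}^n\|_{\mathcal T_h}$): test with the gradient, replace the trace by $P_M$ using that $\nabla e_{jh}^{u^n}\cdot\bm n$ has degree $k$ on each face, then apply Cauchy--Schwarz and the local inverse inequality. You additionally supply the step the paper leaves implicit, namely controlling $\|h_K^{-1/2}(e_{jh}^{u^n}-P_Me_{jh}^{u^n})\|_{\partial\mathcal T_h}$ by $C\|\nabla e_{jh}^{u^n}\|_{\mathcal T_h}$ via the element mean, the facewise projection contraction, and the trace--Poincar\'e combination, which is exactly what is needed to get the unprojected jump term on the left-hand side.
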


\begin{lemma}\label{intermed}
	If  the condition \eqref{cond-c} holds and the domain is convex,  then we have the following error estimate
	\begin{align*}
		\max_{2\le n\le N} 
		\|e^{{u}^n}_{jh}\|_{\mathcal{T}_h} + \sqrt{\Delta t\sum_{n=2}^N \|\sqrt{\overline c^n}e^{\bm{q}^{n}}_{jh}\|^2_{\mathcal{T}_h}} \le C\left(\Delta t^2+h^{k+2}\right).
	\end{align*}	
\end{lemma}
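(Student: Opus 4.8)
The plan is to mirror the energy argument of \Cref{stability_HDG_lemma}, now driven by the projection-of-error equations \eqref{error1} instead of the shifted unknown $w_{jh}^n$. I would first take $(\bm r_j,w_j,\mu_j)=(e^{\bm q^n}_{jh},e^{u^n}_{jh},e^{\widehat u^n}_{jh})$ in \eqref{error_a}--\eqref{errorb} and add the two identities. \Cref{id} turns the pairing $(\partial^+_t e^{u^n}_{jh},e^{u^n}_{jh})_{\mathcal T_h}$ into the telescoping quantity together with the nonnegative remainder $\tfrac14\|e^{u^n}_{jh}-2e^{u^{n-1}}_{jh}+e^{u^{n-2}}_{jh}\|^2_{\mathcal T_h}$; the diffusion pairing produces $\|\sqrt{\overline c^n}e^{\bm q^n}_{jh}\|^2_{\mathcal T_h}$; and the left-hand convection block is bounded below by \eqref{V1} with $(w,\mu,\bm\gamma)=(e^{u^n}_{jh},e^{\widehat u^n}_{jh},\overline{\bm\beta}^n)$, yielding the coercive jump $\tfrac12\|h_K^{-1/2}(P_M e^{u^n}_{jh}-e^{\widehat u^n}_{jh})\|^2_{\partial\mathcal T_h}$ at the price of a reabsorbable $Ch\|\nabla e^{u^n}_{jh}\|^2_{\mathcal T_h}$.

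The decisive bookkeeping is on the right. Every ``explicit'' BDF2 combination is split into an error part and a consistency part, $2\bm q^{n-1}_{jh}-\bm q^{n-2}_{jh}-\overline{\bm q}^n_{jh}=(2e^{\bm q^{n-1}}_{jh}-e^{\bm q^{n-2}}_{jh})-\Delta t^2\partial^+_{tt}\overline{\bm q}^n_{jh}$ and likewise for $2u^{n-1}_{jh}-u^{n-2}_{jh}-\overline u^n_{jh}$. The error parts feed the Gronwall sum and are absorbed using the coefficient condition \eqref{cond-c}, exactly as the terms $R_1$ and $R_7$--$R_{15}$ were absorbed in \Cref{stability_HDG_lemma}; in particular the $(\overline{\bm\beta}^n-\bm\beta_j^n)$ volume and boundary terms are first rewritten via \eqref{frequ} and then the uncontrollable piece is removed by recycling \eqref{error_a} with a test flux built from $\bm\Pi_0(\overline{\bm\beta}^n-\bm\beta_j^n)$ times the scalar errors, which transfers it onto the already-controlled flux error. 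The consistency parts $\Delta t^2\partial^+_{tt}\overline{\bm q}^n_{jh}$ and $\Delta t^2\partial^+_{tt}\overline u^n_{jh}$ are controlled by \eqref{es-parttial-02} of \Cref{error_time} together with the projection bounds of \Cref{error_dual}, contributing $O(\Delta t^2)$ after multiplying by $\Delta t$ and summing.

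The genuinely new source is the time-consistency pairing $(\partial_t u^n_j-\partial^+_t\overline u^n_{jh},e^{u^n}_{jh})_{\mathcal T_h}$, which I would decompose as $(\partial_t u^n_j-\partial^+_t u^n_j)+\partial^+_t(u^n_j-\overline u^n_{jh})$. The first piece is the BDF2 truncation error, bounded through \eqref{es-parttial-03} so that $\Delta t\sum_n\|\partial_t u^n_j-\partial^+_t u^n_j\|^2_{\mathcal T_h}\le C\Delta t^4$; the second is a difference quotient of the elliptic-projection error, bounded by \eqref{es-parttial-01} together with $\|\partial_t u_j-\partial_t\overline u_{jh}\|_{\mathcal T_h}\le Ch^{k+2}$ from \Cref{error_dual}, giving an $O(h^{k+2})$ contribution. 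At this point \eqref{equal-uq} is invoked to replace every $\|\nabla e^{u^n}_{jh}\|_{\mathcal T_h}$ by the flux error and the jump seminorm, so that all the stray $Ch$ factors become genuinely absorbable once $h$ is small.

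Finally I would sum from $n=2$ to $N$, telescope the \Cref{id} terms, and move the three flux quantities $\|\sqrt{\overline c^{n}}e^{\bm q^{n}}_{jh}\|^2_{\mathcal T_h}$, $\|\sqrt{\overline c^{n-1}}e^{\bm q^{n-1}}_{jh}\|^2_{\mathcal T_h}$, $\|\sqrt{\overline c^{n-2}}e^{\bm q^{n-2}}_{jh}\|^2_{\mathcal T_h}$ to the left, which is legitimate because \eqref{cond-c} forces their combined coefficient to stay strictly below one; a discrete Gronwall inequality in $\|e^{u^n}_{jh}\|^2_{\mathcal T_h}$ then yields the stated $O(\Delta t^2+h^{k+2})$ bound, the starting data $e^{u^0}_{jh},e^{u^1}_{jh}$ being of the right order by the omitted $n=1$ analysis. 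I expect the main obstacle to be the simultaneous treatment of the convection terms: the left-hand block must be kept coercive through \eqref{V1}, while the right-hand $(\overline{\bm\beta}^n-\bm\beta_j^n)$ terms cannot be estimated in isolation and can only be tamed by recycling \eqref{error_a}, with condition \eqref{cond-c} supplying exactly the slack needed to absorb the extra flux errors this recycling generates.
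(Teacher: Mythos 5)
Your proposal is correct and follows essentially the same route as the paper's proof: the same energy argument with test functions $(e^{\bm q^n}_{jh},e^{u^n}_{jh},e^{\widehat u^n}_{jh})$, the polarization identity \Cref{id} and stability bound \eqref{V1}, the splitting $2\bm q^{n-1}_{jh}-\bm q^{n-2}_{jh}-\overline{\bm q}^n_{jh}=2e^{\bm q^{n-1}}_{jh}-e^{\bm q^{n-2}}_{jh}-\Delta t^2\partial^+_{tt}\overline{\bm q}^n_{jh}$ (and its scalar analogues), the treatment of the $(\overline{\bm\beta}^n-\bm\beta^n_j)$ terms via \eqref{frequ} plus recycling \eqref{error_a} with the test flux $\bm\Pi_0(\overline{\bm\beta}^n-\bm\beta^n_j)(2u^{n-1}_{jh}-u^{n-2}_{jh}-\overline u^n_{jh})$, the decomposition of $(\partial_t u^n_j-\partial^+_t\overline u^n_{jh},e^{u^n}_{jh})_{\mathcal T_h}$, and the concluding summation, absorption via \eqref{cond-c}, \Cref{error_time}, \Cref{error_dual} and Gronwall. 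No substantive differences to report.
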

\begin{proof}
	We take $(\bm r_j,w_j,\mu_j)=(e^{\bm{q}^{n}}_{jh},e^{u^n}_{jh},e^{{\widehat{u}}^n}_{jh} )$
	in \eqref{error1}, use the polarization
	identity \eqref{id},
	stability \eqref{V1} with $(\bm\gamma,w,\mu)=(\bm\beta_j^n,e_{jh}^{u^n},e_{jh}^{\widehat u^n})$,
	and add them together to get
	\begin{align}
		&\quad \frac{\|e^{{u}^n}_{jh}\|^2_{\mathcal{T}_h}+\|2e^{{u}^n}_{jh}-e^{{u}^{n-1}}_{jh}\|^2_{\mathcal{T}_h}-\|e^{{u}^{n-1}}_{jh}\|^2_{\mathcal{T}_h}-\|2e^{{u}^{n-1}}_{jh}-e^{{u}^{n-2}}_{jh}\|^2_{\mathcal{T}_h}}{4\Delta t}\nonumber\\
		&\quad +\frac{\|e^{{u}^n}_{jh}-2e^{{u}^{n-1}}_{jh}+e^{{u}^{n-2}}_{jh}\|^2_{\mathcal{T}_h}}{4\Delta t}+\|\sqrt{\overline c^n}e^{\bm{q}^{n}}_{jh}\|^2_{\mathcal{T}_h}
		+\frac{1}{2}\|h_K^{-1/2}(P_M e^{u^n}_{jh}-e^{{\widehat{u}}^n}_{jh})\|^2_{\partial\mathcal{T}_h}\nonumber\\
		&\le
		((\overline c^n-c^n_j)(2\bm{q}^{n-1}_{jh}-\bm{q}^{n-2}_{jh}-
		\overline{\bm{q}}^{n}_{jh}),e^{\bm{q}^{n}}_{jh} )+
		(\partial_t u_j^n-\partial^+_t\overline{u}_{jh}^n,e^{u^n}_{jh})_{\mathcal{T}_h}\nonumber\\
		&\quad
		-( \nabla\cdot(\overline{\bm \beta}^n-\bm{\beta}_j^n) (
		2{u}_{jh}^{n-1}-{u}_{jh}^{n-2}
		-\overline{u}_{jh}^{n}), e^{u^n}_{jh})_{\mathcal{T}_h}\nonumber\\
		&\quad-( (\overline{\bm \beta}^n-\bm{\beta}_j^n) (
		2{u}_{jh}^{n-1}-{u}_{jh}^{n-2}
		-\overline{u}_{jh}^{n}), \nabla e^{u^n}_{jh})_{\mathcal{T}_h}\nonumber\\
		&\quad+\langle (\overline{\bm \beta}^n-\bm{\beta}_j^n)\cdot\bm n, (2{\widehat u}_{jh}^{n-1}-{\widehat u}_{jh}^{n-2}-\widehat{\overline u}^n_{jh}) e^{{u}^n}_{jh}\rangle_{\partial\mathcal{T}_h}+Ch\|\nabla\varepsilon_{jh}^{u^n}\|^2_{\mathcal{T}_h}\nonumber\\
		&=:\sum_{i=1}^6 R_i.\label{29}
	\end{align}
	Next,  we estimate $\{R_i\}_{i=1}^6$ term by term. For the first term $R_1$, since 
	\begin{align}\label{identity_q}
		2\bm q_{jh}^{n-1}-\bm q_{jh}^{n-2}-\overline{\bm q}_{jh}^n=
		2e_{jh}^{\bm q^{n-1}}-e_{jh}^{\bm q^{n-2}}-\Delta t^2\partial_{tt}^+\overline{\bm q}_{jh}^n,
	\end{align}
	we use condition \eqref{condition-cc} to get 
	\begin{align*}
		R_1&=((\overline c^n-c^n_j)(
		2e_{jh}^{\bm q^{n-1}}-e_{jh}^{\bm q^{n-2}}
		-\Delta t^2\partial_{tt}^+\overline{\bm q}_{jh}^n),e^{\bm{q}^{n}}_{jh})\\
		&\le \frac{\kappa}{3(\kappa+1)}\left(\frac{3}{2}\|\sqrt{\overline c^n}e^{\bm{q}^{n}}_{jh}\|^2_{\mathcal{T}_h}
		+\|\sqrt{\overline c^{n-1}}e^{\bm{q}^{n-1}}_{jh}\|_{\mathcal T_h}^2
		+\frac{1}{2}\|\sqrt{\overline c^{n-1}}e^{\bm{q}^{n-2}}_{jh}\|^2_{\mathcal{T}_h}
		\right)\nonumber\\
		&\quad +\frac{1}{8(\kappa+1)}\|\sqrt{\overline c^n}e^{\bm{q}^{n}}_{jh}\|^2_{\mathcal{T}_h} 
		+C\Delta t^4\|\partial_{tt}^+\overline{\bm q}_{jh}^n\|^2_{\mathcal{T}_h}.
	\end{align*}
	For the term $R_2$, we have 
	\begin{align*}
		R_2&=(\partial^+_t(u^n_j-\overline{u}_{jh}^n)
		+\partial_tu_j^n-\partial^+_tu^n_j ,e^{u^n}_{jh})_{\mathcal{T}_h}\\
		&\le C\left( \|\partial^+_t(u^n_j-\overline{u}_{jh}^n)\|^2_{\mathcal{T}_h}
		+\|\partial_tu_j^n-\partial^+_tu^n_j\|^2_{\mathcal{T}_h}
		+\|e^{u^n}_{jh}\|^2_{\mathcal{T}_h} \right).
	\end{align*}
	For the term $R_3+R_4+R_5$,  \Cref{frequ} and  \eqref{error_a} give 
	\begin{align*}
		\hspace{1em}&\hspace{-1em} R_3+R_4+R_5\\
		&\le C\left(\|	2{u}_{jh}^{n-1}-{u}_{jh}^{n-2}
		-\overline{u}_{jh}^{n}\|_{\mathcal{T}_h}^2
		+\|e_{jh}^{u^n}\|_{\mathcal{T}_h}^2+\|h_K^{1/2}(P_M e_{jh}^{u^n}-{e}_{jh}^{\widehat{u}^n})\|_{\partial\mathcal{T}_h}^2
		\right)\\
		&\quad+\left(\nabla\cdot\left[\bm{\Pi}_0(\overline{\bm \beta}^n-\bm{\beta}^n_j) (2{u}_{jh}^{n-1}-{u}_{jh}^{n-2}-\overline{u}_{jh}^{n})\right],e_{jh}^{u^n}\right)_{\mathcal{T}_h}\\
		&\quad
		-\langle \bm{\Pi}_0(\overline{\bm \beta}^n-\bm{\beta}^n_j)(2{u}_{jh}^{n-1}-{u}_{jh}^{n-2}-\overline{u}_{jh}^{n})\cdot\bm n,e_{jh}^{\widehat{u}^n}\rangle_{\partial\mathcal{T}_h}\\
		&\quad+ \langle
		(\overline{\bm\beta}^n-\bm\beta_j^n)\cdot\bm n
		(2u_{jh}^{n-1}-u_{jh}^{n-2}-\overline{u}_{jh}^n
		-2\widehat{u}_{jh}^{n-1}+\widehat{u}_{jh}^{n-2}+\widehat{\overline{u}}_{jh}^n
		),e_{jh}^{u^n}-e_{jh}^{\widehat u^n}
		\rangle_{\partial\mathcal{T}_h}\\
		& =C\left(\|2{u}_{jh}^{n-1}-{u}_{jh}^{n-2}
		-\overline{u}_{jh}^{n}\|_{\mathcal{T}_h}^2
		+\|e_{jh}^{u^n}\|_{\mathcal{T}_h}^2
		+\|h_K^{1/2}(P_M e_{jh}^{u^n}-{e}_{jh}^{\widehat{u}^n})\|_{\partial\mathcal{T}_h}^2
		\right)\\
		&\quad+	(\overline c^ne_{jh}^{\bm q^n},\bm{\Pi}_0(\overline{\bm \beta}^n-\bm{\beta}^n_j)(2{u}_{jh}^{n-1}-{u}_{jh}^{n-2}-\overline{u}_{jh}^{n}))_{\mathcal{T}_h}\\
		&\quad-
		( ( \overline c^n-c_j^n)(2\bm{q}^{n-1}_{jh}-\bm{q}^{n-2}_{jh}-
		\overline{\bm{q}}^{n}_{jh}),\bm{\Pi}_0(\overline{\bm \beta}^n-\bm{\beta}^n_j)(2{u}_{jh}^{n-1}-{u}_{jh}^{n-2}-\overline{u}_{jh}^{n}))_{\mathcal{T}_h}\\
		&\quad+ \langle
		(\overline{\bm\beta}^n-\bm\beta_j^n)\cdot\bm n
		(2u_{jh}^{n-1}-u_{jh}^{n-2}-\overline{u}_{jh}^n
		-2\widehat{u}_{jh}^{n-1}+\widehat{u}_{jh}^{n-2}+\widehat{\overline{u}}_{jh}^n),e_{jh}^{u^n}-e_{jh}^{\widehat  u^n}
		\rangle_{\partial\mathcal{T}_h}.
		&\end{align*}
	Similar  to \eqref{identity_q}, we have 
	\begin{align*}
		2u_{jh}^{n-1}-u_{jh}^{n-2}-\overline{u}_{jh}^n=
		2e_{jh}^{u^{n-1}}-e_{jh}^{u^{n-2}}-\Delta t^2\partial_{tt}^+\overline{u}_{jh}^n,
	\end{align*}
	and 
	\begin{align*}
		\hspace{-1em}&\hspace{1em} 2u_{jh}^{n-1}-u_{jh}^{n-2}-\overline{u}_{jh}^n
		-2\widehat{u}_{jh}^{n-1}+\widehat{u}_{jh}^{n-2}+\widehat{\overline{u}}_{jh}^n \\
		&=2(e_{jh}^{u^{n-1}}-e_{jh}^{\widehat u^{n-1}})
		-(e_{jh}^{u^{n-2}}-e_{jh}^{\widehat u^{n-2}})
		-\Delta t^2\partial^+_{tt}\overline{u}_{jh}^n
		+\Delta t^2\partial^+_{tt}{\widehat {\overline u}}_{jh}^n.
	\end{align*}
	Therefore, when $h$ is small enough,
	we have
	\begin{align*}
		\hspace{1em}&\hspace{-1em} R_3+R_4+R_5\\
		&\le \frac{1}{24(\kappa+1)}\left(\|\sqrt{\overline c^n}e^{\bm{q}^{n}}_{jh}\|^2_{\mathcal{T}_h}
		+\|\sqrt{\overline c^{n-1}}e^{\bm{q}^{n-1}}_{jh}\|^2_{\mathcal{T}_h}
		+\|\sqrt{\overline c^{n-2}}e^{\bm{q}^{n-2}}_{jh}\|^2_{\mathcal{T}_h}\right)\nonumber\\
		&\quad +\frac{1}{16}\|h_K^{-1/2}(P_M e^{u^n}_{jh}-e^{{\widehat{u}}^n}_{jh})\|^2_{\mathcal{T}_h}+
		C\left(
		\|e^{u^n}_{jh}\|^2_{\mathcal{T}_h}
		+\|e^{u^{n-1}}_{jh}\|^2_{\mathcal{T}_h}
		+\|e^{u^{n-2}}_{jh}\|^2_{\mathcal{T}_h}	
		\right)\\
		&\quad +\frac{1}{16}\|h_K^{-1/2}(P_M e^{u^{n-1}}_{jh}-e^{{\widehat{u}}^{n-1}}_{jh})\|^2_{\mathcal{T}_h}
		+\frac{1}{16}\|h_K^{-1/2}(P_M e^{u^{n-2}}_{jh}-e^{{\widehat{u}}^{n-2}}_{jh})\|^2_{\mathcal{T}_h}
		\\
		&\quad +C\Delta t^4\|\partial_{tt}^+\overline{\bm q}_{jh}^n\|^2_{\mathcal{T}_h}+C\Delta t^4\|\partial_{tt}^+{\overline{u}_{jh}^n}\|^2_{\mathcal{T}_h}
		+C\Delta t^4\|h_K^{1/2} \partial_{tt}^+({\overline{u}_{jh}^n}
		-\widehat{\overline{u}}_{jh}^n)
		\|^2_{\partial\mathcal{T}_h}.
	\end{align*}
	By the Cauchy-Schwarz inequality and $h$ small enough, by \eqref{equal-uq} and \eqref{identity_q}, we  get
	\begin{align*}
		R_6
		&\le \frac{1}{24(\kappa+1)} \left(\|\sqrt{\overline c^n}e^{\bm{q}^{n}}_{jh}\|^2_{\mathcal{T}_h}
		+\|\sqrt{\overline c^{n-1}}e^{\bm{q}^{n-1}}_{jh}
		+\|\sqrt{\overline c^{n-1}}e^{\bm{q}^{n-2}}_{jh}\|^2_{\mathcal{T}_h}
		\right)+C\Delta t^4\|\partial_{tt}^+\overline{\bm q}_{jh}^n\|^2_{\mathcal{T}_h}.
	\end{align*}
	We add \eqref{29} from $n=2$ to $n=N$ 
	and use the above  inequalities  to get
	\begin{align}\label{214}
		\begin{split}
			\hspace{0.1em}&\hspace{-0.1em}\max_{2\le n\le N} 
			\|e^{{u}^n}_{jh}\|^2_{\mathcal{T}_h} +\Delta t\sum_{n=2}^N \|\sqrt{\overline c^n}e^{\bm{q}^{n}}_{jh}\|^2_{\mathcal{T}_h} \\
			&\le C	\Delta t^5\sum_{n=2}^N(
			\|\partial_{tt}^+\overline{u}_{jh}^n\|^2_{\mathcal{T}_h}
			+\|\partial_{tt}^+\overline{\bm q}_{jh}^n\|^2_{\mathcal{T}_h}
			+\|h_K^{1/2}\partial_{tt}^+({\overline{u}_{jh}^n}
			-\widehat{\overline{u}}_{jh}^n)
			\|^2_{\partial\mathcal{T}_h}),\\
			&\quad +C\Delta t\sum_{n=2}^N
			(
			\|\partial^+_t( u_j^n-\overline{u}_{jh}^n)\|^2_{\mathcal{T}_h} +\|\partial_t u_j^n-\partial^+_t u_j^n\|^2_{\mathcal{T}_h}
			) + C\Delta t\sum_{n=2}^N \|e^{u^n}_{jh}\|^2_{\mathcal{T}_h}\\
			&\quad+ C\left(
			\|e_{jh}^{u^0}\|^2_{\mathcal{T}_h}
			+\|e_{jh}^{u^1}\|^2_{\mathcal{T}_h}
			+\Delta t\|e_{jh}^{\bm q^1}\|^2_{\mathcal{T}_h}
			\right).
		\end{split}
	\end{align}
	Next,  
	we  bound the terms on the right side of \eqref{214} by  \Cref{error_time}.
	\begin{align*}
		\Delta t^5\sum_{n=2}^N\|\partial_{tt}^+\overline{u}_{jh}^n\|^2_{\mathcal{T}_h} &\le C\Delta t^4 \|\partial_{tt}\overline{u}_{jh}\|^2_{L^2(0,T;L^2(\Omega))},\\
		\Delta t^5\sum_{n=2}^N\|\partial_{tt}^+\overline{\bm q}_{jh}^n\|^2_{\mathcal{T}_h}
		&\le C\Delta t^4 \|\partial_{tt}\overline{\bm q}_{jh}\|^2_{_{L^2(0,T;L^2(\Omega))}},\\
		\Delta t
		\sum_{n=2}^N
		\|\partial^+_t( u_j^n-\overline{u}_{jh}^n)\|^2_{\mathcal{T}_h}
		&\le C\|\partial_t(u_j-\overline{u}_{jh})\|_{L^2(0,T; L^2(\Omega))}^2,\\
		\Delta t\sum_{n=2}^N\|\partial_t u_j^n-\partial^+_t u_j^n\|^2_{\mathcal{T}_h}
		&\le C\Delta t^4 \|\partial_{ttt} u_j\|^2_{L^2(0,T; L^2(\Omega))},\\
		\Delta t^5\sum_{n=2}^N\|h_K^{1/2}\partial_{tt}^+({\overline{u}_{jh}^n}
		-\widehat{\overline{u}}_{jh}^n)
		\|^2_{\partial\mathcal{T}_h}
		&\le C\Delta t^4 \| h_K^{1/2} \partial_{tt}
		({\overline{u}_{jh}}
		-\widehat{\overline{u}}_{jh})
		\|_{L^2(0,T;L^2(\partial\mathcal{T}_h))}^2.
	\end{align*}
	Gronwall's inequality,  the estimates above, \Cref{error_dual}
	applied to \eqref{214}, and \eqref{01}  give the desired result.
\end{proof}

As a consequence, a simple application of the triangle inequality for \eqref{intermed} and \Cref{error_dual} give the proof of \Cref{main_theorem}.

\section{Numerical experiments}
\label{numericalexperiments}
In this section, we present some numerical tests of the Ensemble HDG method  for parameterized convection diffusion PDEs. A group of simulations are  considered containing  $J = 3$ members. Let $Eu_j$ be the error bewteen the exact solution $u_j$ at the final time $T=1$ and the Ensemble HDG solution $u_{jh}^N$, i.e., $Eu_j = \|u_j^N -u_{jh}^N\|_{\mathcal T_h}$. Let 
\begin{align*}
	E\bm q _j = \sqrt{\Delta t\sum_{n=1}^N \|\bm q_j^n - \bm q_{jh}^n\|^2_{\mathcal T_h}}.
\end{align*}
We  test the convergence rate of the Ensemble HDG method for on a square domain $\Omega = [0,1]\times[0,1]$. The data is chosen as
\begin{gather*}
	c_1 = 1.1, \ c_2 = 1.2, \ c_3 = 1.3,  \ \bm \beta_1 = [1,1], \ \bm \beta_2 = [2,2], \ \bm \beta_3 = [3,3], \\
	u_1= e^{-t}\sin(x), \ u_2= \cos (t)\cos(x), \ u_3= e^{x-t}, 
\end{gather*}
and the initial conditions, boundary conditions, and source terms are chosen to match the exact solution of  \Cref{convection_pde}. It is easy to see that the  coefficients $c_j$ satisfy the condition \eqref{cond-c}.

In order to confirm our theoretical results, we take $\Delta t = h$ when $k=0$ and $\Delta t = h^{3/2}$ when $k=1$. The approximation errors of the Ensemble HDG method  are listed in \Cref{table_1} and the observed convergence rates match our theory. 	
\begin{table}
	\caption{History of convergence}\label{table_1}
	 [Errors for $\bm{q}_1$ and $u_1$]{
		\begin{tabular}{c|c|c|c|c|c}
			\Xhline{0.1pt}
			
			\multirow{2}{*}{Degree}
			&\multirow{2}{*}{$\frac{h}{\sqrt{2}}$}	
			&\multicolumn{2}{c|}{$E {\bm q}_1$}	
			&\multicolumn{2}{c}{$E u_1$}		\\
			\cline{3-6}
			& &Error &Rate
			&Error &Rate
			\\
			\cline{1-6}
			\multirow{5}{*}{ $k=0$}
			&$2^{-1}$	&1.8083E-02	    &	              &7.5266E-03	    &	      \\
			&$2^{-2}$	&1.0533E-02	    &0.77972	&1.7867E-03      &2.0747 	 \\
			&$2^{-3}$	&5.4869E-03	     &0.94087 	& 4.6561E-04	 &1.9401 	   \\
			&$2^{-4}$	&2.8018E-03	     &0.96967	& 1.1836E-04 	 &1.9760 	   \\
			&$2^{-5}$	&1.4106E-03     &0.98999	& 2.9742E-05    &1.9926 	   \\
			
			\cline{1-6}
			\multirow{5}{*}{ $k=1$}
			&$2^{-1}$	&6.6885E-03    &	       & 8.9039E-04	    &	      \\
			&$2^{-2}$	&6.8683E-04	    &3.2837	& 9.1693E-05      &3.2796 	 \\
			&$2^{-3}$	&1.1837E-04	    &2.5366 	& 1.1605E-05	 &2.9820 	   \\
			&$2^{-4}$	&2.5556E-05 	&2.2116	&  1.4488E-06	 & 3.0019 	   \\
			&$2^{-5}$	&6.2136E-06     &2.0402	&  1.8153E-07    &2.9966 	   \\

			\Xhline{0.1 pt}

		\end{tabular}
	}

	 [Errors for $\bm{q}_1$ and $u_1$]{
		\begin{tabular}{c|c|c|c|c|c}
			\Xhline{0.1pt}

			\multirow{2}{*}{Degree}
			&\multirow{2}{*}{$\frac{h}{\sqrt{2}}$}	
			&\multicolumn{2}{c|}{$E {\bm q}_1$}	
			&\multicolumn{2}{c}{$E u_1$}		\\
			\cline{3-6}
			& &Error &Rate
			&Error &Rate
			\\
			\cline{1-6}
			\multirow{5}{*}{ $k=0$}
			&$2^{-1}$	&3.7010E-02	    &	              & 1.5621E-02	    &	      \\
			&$2^{-2}$	& 2.2823E-02    &0.69743	& 4.7235E-03      &1.7256  	 \\
			&$2^{-3}$	&1.2037E-02	   &0.92305 	& 1.1678E-03 	 &2.0161 	   \\
			&$2^{-4}$	& 6.1239E-03	&0.97490 	&  2.9335E-04	 &1.9931	   \\
			&$2^{-5}$	&3.0784E-03     &0.99225	&  7.3533E-05   &1.9961 	   \\
			
			\cline{1-6}
			\multirow{5}{*}{ $k=1$}
			&$2^{-1}$	&1.5396E-03	    &	       &6.1243E-04	    &	      \\
			&$2^{-2}$	&6.9243E-04 	    &1.1528	&1.7340E-04      &1.8204	 \\
			&$2^{-3}$	&1.1603E-04   	& 2.5771	&2.3281E-05	 &2.8969 	   \\
			&$2^{-4}$	& 2.2315E-05 	&2.3784	& 2.7999E-06 	 & 3.0557 	   \\
			&$2^{-5}$	& 5.1675E-06    & 2.1105	&  3.5064E-07    & 2.9973 	   \\

			\Xhline{0.1 pt}

		\end{tabular}
	}

	 [Errors for $\bm{q}_1$ and $u_1$]{
		\begin{tabular}{c|c|c|c|c|c}
			\Xhline{0.1pt}

			\multirow{2}{*}{Degree}
			&\multirow{2}{*}{$\frac{h}{\sqrt{2}}$}	
			&\multicolumn{2}{c|}{$E {\bm q}_1$}	
			&\multicolumn{2}{c}{$E u_1$}		\\
			\cline{3-6}
			& &Error &Rate
			&Error &Rate
			\\
			\cline{1-6}
			\multirow{5}{*}{ $k=0$}
			&$2^{-1}$	& 5.5804E-02	&	                &2.1027E-02 	 &	      \\
			&$2^{-2}$	&3.0016E-02    &0.89461	     &6.4094E-03      &1.7140	 \\
			&$2^{-3}$	&1.5676E-02 	&0.93717 	&  1.5422E-03	 &2.0552 	   \\
			&$2^{-4}$	&7.9953E-03	    &0.97136 	&  3.8658E-04	 &1.9962 	   \\
			&$2^{-5}$	&4.0241E-03     &0.99049	& 9.6789E-05    &1.9978 	   \\
			
			\cline{1-6}
			\multirow{5}{*}{ $k=1$}
			&$2^{-1}$	&3.1068E-02	    &	       &2.4217E-03	    &	      \\
			&$2^{-2}$	&1.2610E-03 	    &4.6228	& 2.1045E-04     &3.5245 	 \\
			&$2^{-3}$	&2.0956E-04 	&2.5891 	& 2.6310E-05 	 &2.9998 	   \\
			&$2^{-4}$	&4.4629E-05 	&2.2313 	&  3.1994E-06	 &3.0397 	   \\
			&$2^{-5}$	&1.0818E-05     & 2.0446	& 4.0049E-07   &2.9980 	   \\

			\Xhline{0.1 pt}

		\end{tabular}
	}

\end{table}

\section{Conclusion}
In this work, we devised a new superconvergent Ensemble HDG method for parameterized convection diffusion PDEs.  This new Ensemble HDG method shares one common coefficient matrix and multiple RHS vectors, which is more efficient than performing separate simulations. We obtained a $L^{\infty}(0,T;L^2(\Omega))$ superconvergent rate for the solutions for all polynomial degree $k\ge 0$. As far as we are aware, this is the first time in the literature, it is even the first time for a single convection diffusion PDE to obtain the superconvergence rate when $k=0$.

\section*{Funding}
National Natural Science Foundation of China (NSFC) [grant no. 11801063 to G.\ Chen], China Postdoctoral Science Foundation [grant no. 2018M633339 and grant no. 2019T120808 to G.\ Chen], and US National Science Foundation (NSF) [grant no. DMS-1619904 to Y.\ Zhang].

\section{Appendix}

\label{app}
In this section,  we only give a proof of $\|\bm q_j- \overline{\bm q}_{jh}\|_{\mathcal{T}_h} \le  C h^{k+1}$, $\| u_j- \overline{ u }_{jh}\|_{\mathcal{T}_h} \le  C h^{k+2}$ and $	\|h_K^{1/2}( \overline{u}_{jh}-\widehat{\overline u}_{jh})\|_{\partial\mathcal{T}_h} \le Ch^{k+1}$ since the rest are similar. To prove the rest, we differentiate the error equations in \Cref{projection} with respect to time $t$. It is worth mentioning that we don't  need to assume that the coefficients are independent of time. However, we need to assume the coefficients are independent of time in the previous work \cite{PiChenZhangXu1}.

To shorten lengthy equations, we define the following HDG operators $ \mathscr B_j $ and $ \mathscr C_j $.
\begin{equation}\label{def_B}
\begin{split}
\hspace{0.1em}&\hspace{-0.1em}\mathscr  B_j(\overline{\bm q}_{jh}, \overline u_{jh},\widehat{ \overline{u}}_{jh};\bm r_j,w_j,\mu_j)\\
&=(c_j\overline{\bm{q}}_{jh}, \bm{r}_j)_{{\mathcal{T}_h}}- (\overline u_{jh}, \nabla\cdot \bm{r}_j)_{{\mathcal{T}_h}}+\langle \widehat{\overline u}_{jh}, \bm{r}_j\cdot \bm{n} \rangle_{\partial{{\mathcal{T}_h}}}\\
&\quad+ (\nabla\cdot\overline{\bm{q}}_{jh},  w_j)_{{\mathcal{T}_h}}
- \langle \overline{\bm q}_{jh}\cdot\bm n, \mu_j \rangle_{\partial \mathcal T_h}
+\langle  h_K^{-1} (P_M \overline{u}_{jh} - \widehat {\overline u}_{jh}) , P_M w_j-\mu_j \rangle_{\partial{{\mathcal{T}_h}}} \\
&\quad  -( \bm \beta_j  \overline u_{jh}, \nabla w_j)_{{\mathcal{T}_h}} - (\nabla\cdot\bm \beta_j  \overline u_{jh}, w_j)_{\mathcal T_h}+\langle \bm{\beta}_j\cdot\bm{n} \widehat {\overline u}_{jh}, w_j \rangle_{\partial{{\mathcal{T}_h}}}\\
\hspace{1em}&\hspace{-1em}\mathscr  C_j(\overline{\bm q}_{jh}, \overline u_{jh},\widehat{ \overline{u}}_{jh};\bm r_j,w_j,\mu_j)\\
&=(c_j\overline{\bm{q}}_{jh}, \bm{r}_j)_{{\mathcal{T}_h}}- (\overline u_{jh}, \nabla\cdot \bm{r}_j)_{{\mathcal{T}_h}}+\langle \widehat{\overline u}_{jh}, \bm{r}_j\cdot \bm{n} \rangle_{\partial{{\mathcal{T}_h}}}\\
&\quad+ (\nabla\cdot\overline{\bm{q}}_{jh},  w_j)_{{\mathcal{T}_h}}
- \langle \overline{\bm q}_{jh}\cdot\bm n, \mu_j \rangle_{\partial \mathcal T_h}
+\langle  h_K^{-1} (P_M \overline{u}_{jh} - \widehat {\overline u}_{jh}) , P_M w_j-\mu_j \rangle_{\partial{{\mathcal{T}_h}}} \\
&\quad  +( \bm \beta_j  \overline u_{jh}, \nabla w_j)_{{\mathcal{T}_h}} -\langle \bm{\beta}_j\cdot\bm{n} \widehat {\overline u}_{jh}, w_j \rangle_{\partial{{\mathcal{T}_h}}}.
\end{split}
\end{equation}
By the definition of \eqref{def_B},  we can rewrite the HDG formulation of the system \eqref{projection}, as follows: find $(\overline{\bm{q}}_{jh},\overline u_{jh},\widehat {\overline u}_{jh})\in \bm{V}_h\times W_h\times M_h(g_j)$  such that
\begin{align}\label{HDG_full_discrete}
	\mathscr B_j (\overline{\bm{q}}_{jh},\overline u_{jh},\widehat {\overline u}_{jh};\bm r_j,w_j,\mu_j) = (f_j-\partial_tu_j,w_j)_{\mathcal T_h},
\end{align}
for all $\left(\bm{r}_j,w_j,\mu_j\right)\in \bm V_h\times W_h\times M_h(0)$.

In the next lemmas, we present  some basic properties  of the operators $ \mathscr B_j $ and $\mathscr C_j$.
\begin{lemma}\label{energy_norm}
	For any $ ( \overline{\bm v}_{jh}, \overline w_{jh},\overline{\mu}_{jh} ) \in \bm V_h \times W_h \times M_h(0)$, we have
	\begin{align*}
		\hspace{1em}&\hspace{-1em} \mathscr B_j (\overline{\bm v}_{jh},  \overline w_{jh}, \overline{\mu}_{jh}; \overline{\bm v}_{jh}, \overline w_{jh},\overline{\mu}_{jh})\\
		& = (c_j\overline{\bm{v}}_{jh}, \overline{\bm{v}}_{jh})_{{\mathcal{T}_h}}+\langle h^{-1}_K (P_M \overline{w}_{jh}- \overline{\mu}_{jh}), P_M\overline{w}_{jh}- \overline{\mu}_{jh}\rangle_{\partial{{\mathcal{T}_h}}} \\
		&\quad - \frac{1}{2} \langle \bm\beta_j\cdot\bm n(\overline{w}_{jh}- \overline{\mu}_{jh}), \overline{w}_{jh}- \overline{\mu}_{jh} \rangle_{\partial\mathcal T_h} - \frac 1 2 (\nabla \cdot\bm \beta_j \overline{w}_{jh}, \overline{w}_{jh})_{\mathcal T_h}.
	\end{align*}
\end{lemma}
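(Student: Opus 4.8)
The plan is to evaluate $\mathscr B_j$ with both argument triples set equal to $(\overline{\bm v}_{jh}, \overline w_{jh}, \overline\mu_{jh})$, then cancel the skew-symmetric mixed contributions and rewrite the convection block using \Cref{beta}. First I would substitute $(\overline{\bm q}_{jh}, \overline u_{jh}, \widehat{\overline u}_{jh}) = (\bm r_j, w_j, \mu_j) = (\overline{\bm v}_{jh}, \overline w_{jh}, \overline\mu_{jh})$ directly into the definition \eqref{def_B}. In the diffusion/mixed block the pair $-(\overline w_{jh}, \nabla\cdot\overline{\bm v}_{jh})_{\mathcal T_h} + (\nabla\cdot\overline{\bm v}_{jh}, \overline w_{jh})_{\mathcal T_h}$ cancels, and the pair $\langle \overline\mu_{jh}, \overline{\bm v}_{jh}\cdot\bm n\rangle_{\partial\mathcal T_h} - \langle \overline{\bm v}_{jh}\cdot\bm n, \overline\mu_{jh}\rangle_{\partial\mathcal T_h}$ also cancels. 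What survives from this block is exactly $(c_j\overline{\bm v}_{jh}, \overline{\bm v}_{jh})_{\mathcal T_h} + \langle h_K^{-1}(P_M\overline w_{jh} - \overline\mu_{jh}), P_M\overline w_{jh} - \overline\mu_{jh}\rangle_{\partial\mathcal T_h}$, which is precisely the first two terms of the claim.

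Next I would treat the convection block $-(\bm\beta_j\overline w_{jh}, \nabla\overline w_{jh})_{\mathcal T_h} - (\nabla\cdot\bm\beta_j\overline w_{jh}, \overline w_{jh})_{\mathcal T_h} + \langle \bm\beta_j\cdot\bm n\,\overline\mu_{jh}, \overline w_{jh}\rangle_{\partial\mathcal T_h}$. Applying \Cref{beta} (equation \eqref{betaa}) with $\bm\gamma = \bm\beta_j$ and $w = \overline w_{jh}$ rewrites $(\bm\beta_j\overline w_{jh}, \nabla\overline w_{jh})_{\mathcal T_h}$ as $\frac 1 2\langle\bm\beta_j\cdot\bm n\,\overline w_{jh}, \overline w_{jh}\rangle_{\partial\mathcal T_h} - \frac 1 2(\nabla\cdot\bm\beta_j\overline w_{jh}, \overline w_{jh})_{\mathcal T_h}$. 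Substituting this and combining with the explicit $-(\nabla\cdot\bm\beta_j\overline w_{jh}, \overline w_{jh})_{\mathcal T_h}$ term leaves the volume contribution $-\frac 1 2(\nabla\cdot\bm\beta_j\overline w_{jh}, \overline w_{jh})_{\mathcal T_h}$, which already matches the last term of the claim, together with the boundary contribution $-\frac 1 2\langle\bm\beta_j\cdot\bm n\,\overline w_{jh}, \overline w_{jh}\rangle_{\partial\mathcal T_h} + \langle\bm\beta_j\cdot\bm n\,\overline\mu_{jh}, \overline w_{jh}\rangle_{\partial\mathcal T_h}$.

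The final step is to recognize this boundary sum as a complete square. Since $\overline\mu_{jh}$ is single-valued on interior faces and $\overline\mu_{jh}|_{\mathcal E_h^\partial} = 0$, the conservation identity $\langle\bm\beta_j\cdot\bm n\,\overline\mu_{jh}, \overline\mu_{jh}\rangle_{\partial\mathcal T_h} = 0$ holds; this is the same fact invoked in the proof of \Cref{VV}. Adding and subtracting $\frac 1 2\langle\bm\beta_j\cdot\bm n\,\overline\mu_{jh}, \overline\mu_{jh}\rangle_{\partial\mathcal T_h}$ and using the symmetry $\langle\bm\beta_j\cdot\bm n\,\overline w_{jh}, \overline\mu_{jh}\rangle_{\partial\mathcal T_h} = \langle\bm\beta_j\cdot\bm n\,\overline\mu_{jh}, \overline w_{jh}\rangle_{\partial\mathcal T_h}$ folds the boundary terms into $-\frac 1 2\langle\bm\beta_j\cdot\bm n(\overline w_{jh} - \overline\mu_{jh}), \overline w_{jh} - \overline\mu_{jh}\rangle_{\partial\mathcal T_h}$, completing the identity. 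I do not anticipate a genuine obstacle: the argument is pure bookkeeping, and the only points needing care are the aforementioned symmetry and the vanishing of the pure-$\overline\mu_{jh}$ boundary integral, both of which follow from the structure of $M_h(0)$.
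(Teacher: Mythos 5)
Your proof is correct, and since the paper states \Cref{energy_norm} without proof, your direct computation — cancelling the skew-symmetric mixed terms, applying \eqref{betaa}, and completing the square on the boundary using $\langle\bm\beta_j\cdot\bm n\,\overline{\mu}_{jh},\overline{\mu}_{jh}\rangle_{\partial\mathcal T_h}=0$ — is exactly the intended argument. The two points you flag as needing care (the symmetry of the trilinear boundary term and the vanishing of the pure-$\overline{\mu}_{jh}$ integral for $\overline{\mu}_{jh}\in M_h(0)$) are indeed the only nontrivial ingredients, and both are handled correctly.
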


\begin{lemma}\label{eq_B}
	For any $ (\overline{\bm v}_{jh},\overline{w}_{jh},\widehat {\overline u}_{jh};\overline {\bm p}_{jh},\overline z_{jh},\widehat {\overline z}_{jh})   \in \bm V_h \times W_h \times M_h(0) \times \bm V_h \times W_h \times M_h(0)$, we have
	\begin{align*}
		\hspace{1em}&\hspace{-1em}\mathscr B_j (\overline{\bm v}_{jh},\overline{w}_{jh},\widehat {\overline u}_{jh};\overline {\bm p}_{jh},-\overline z_{jh},-\widehat {\overline z}_{jh}) + \mathscr C_j (\overline {\bm p}_{jh},\overline z_{jh},\widehat {\overline z}_{jh};-\overline{\bm v}_{jh},\overline{w}_{jh},\widehat {\overline u}_{jh})\\
		&= \langle  \bm{\beta}_j\cdot\bm n (\overline{w}_{jh} - \widehat {\overline{w}}_{jh}), \overline z_{jh}-\widehat {\overline z}_{jh} \rangle_{\partial \mathcal T_h}.
	\end{align*}
\end{lemma}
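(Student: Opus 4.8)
The plan is to recognize that $\mathscr C_j$ is precisely the formal adjoint of $\mathscr B_j$ with respect to the convective part: the mass/mixed/stabilization blocks of the two operators coincide, and the sign pattern built into the two test tuples (the entries $-\overline z_{jh},-\widehat{\overline z}_{jh}$ in the first form and $-\overline{\bm v}_{jh}$ in the second) is engineered so that every non-convective contribution occurs once with a plus sign and once with a minus sign. I would therefore expand both $\mathscr B_j(\overline{\bm v}_{jh},\overline w_{jh},\widehat{\overline w}_{jh};\overline{\bm p}_{jh},-\overline z_{jh},-\widehat{\overline z}_{jh})$ and $\mathscr C_j(\overline{\bm p}_{jh},\overline z_{jh},\widehat{\overline z}_{jh};-\overline{\bm v}_{jh},\overline w_{jh},\widehat{\overline w}_{jh})$ directly from the definition \eqref{def_B} (writing $\widehat{\overline w}_{jh}$ for the numerical trace of $\overline w_{jh}$) and collect like terms.

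In the first step I expect the following pairs to cancel: the mass terms $(c_j\overline{\bm v}_{jh},\overline{\bm p}_{jh})_{\mathcal T_h}$ against $-(c_j\overline{\bm p}_{jh},\overline{\bm v}_{jh})_{\mathcal T_h}$; the mixed volume terms $-(\overline w_{jh},\nabla\cdot\overline{\bm p}_{jh})_{\mathcal T_h}$ against $+(\nabla\cdot\overline{\bm p}_{jh},\overline w_{jh})_{\mathcal T_h}$, and $-(\nabla\cdot\overline{\bm v}_{jh},\overline z_{jh})_{\mathcal T_h}$ against $+(\overline z_{jh},\nabla\cdot\overline{\bm v}_{jh})_{\mathcal T_h}$; the numerical-trace face terms $\langle\widehat{\overline w}_{jh},\overline{\bm p}_{jh}\cdot\bm n\rangle_{\partial\mathcal T_h}$ against $-\langle\overline{\bm p}_{jh}\cdot\bm n,\widehat{\overline w}_{jh}\rangle_{\partial\mathcal T_h}$, and $\langle\overline{\bm v}_{jh}\cdot\bm n,\widehat{\overline z}_{jh}\rangle_{\partial\mathcal T_h}$ against $-\langle\widehat{\overline z}_{jh},\overline{\bm v}_{jh}\cdot\bm n\rangle_{\partial\mathcal T_h}$; and the symmetric stabilization term $-\langle h_K^{-1}(P_M\overline w_{jh}-\widehat{\overline w}_{jh}),P_M\overline z_{jh}-\widehat{\overline z}_{jh}\rangle_{\partial\mathcal T_h}$ against its transpose. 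After these cancellations only the convective contributions survive.

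The one genuinely nontrivial step is handling the surviving convective terms, namely $(\bm\beta_j\overline w_{jh},\nabla\overline z_{jh})_{\mathcal T_h}+(\bm\beta_j\overline z_{jh},\nabla\overline w_{jh})_{\mathcal T_h}+(\nabla\cdot\bm\beta_j\,\overline w_{jh},\overline z_{jh})_{\mathcal T_h}-\langle\bm\beta_j\cdot\bm n\,\widehat{\overline w}_{jh},\overline z_{jh}\rangle_{\partial\mathcal T_h}-\langle\bm\beta_j\cdot\bm n\,\widehat{\overline z}_{jh},\overline w_{jh}\rangle_{\partial\mathcal T_h}$. Here I would apply the polarized form of \Cref{beta} (elementwise integration by parts of $\bm\beta_j\cdot\nabla(\overline w_{jh}\overline z_{jh})$), which collapses the three volume terms into the single boundary term $\langle\bm\beta_j\cdot\bm n\,\overline w_{jh},\overline z_{jh}\rangle_{\partial\mathcal T_h}$. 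Combining with the two numerical-trace face terms and using the symmetry of $\langle\bm\beta_j\cdot\bm n\,\cdot\,,\cdot\rangle_{\partial\mathcal T_h}$, the expression equals $\langle\bm\beta_j\cdot\bm n(\overline w_{jh}-\widehat{\overline w}_{jh}),\overline z_{jh}-\widehat{\overline z}_{jh}\rangle_{\partial\mathcal T_h}$ up to the leftover $-\langle\bm\beta_j\cdot\bm n\,\widehat{\overline w}_{jh},\widehat{\overline z}_{jh}\rangle_{\partial\mathcal T_h}$. This leftover vanishes because $\widehat{\overline w}_{jh},\widehat{\overline z}_{jh}\in M_h(0)$ are single-valued on $\mathcal E_h^o$ and vanish on $\mathcal E_h^\partial$, so the two contributions from each interior face carry opposite outward normals and cancel; this is exactly the identity $\langle\bm\gamma\cdot\bm n\mu,\mu'\rangle_{\partial\mathcal T_h}=0$ already invoked in \Cref{VV}. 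That yields the claimed right-hand side.

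The main obstacle is thus purely the convective bookkeeping: one must track the three volume terms and two face terms simultaneously and realize that it is the single-valued trace cancellation $\langle\bm\beta_j\cdot\bm n\,\widehat{\overline w}_{jh},\widehat{\overline z}_{jh}\rangle_{\partial\mathcal T_h}=0$ that converts the asymmetric remainder into the clean jump form $\overline w_{jh}-\widehat{\overline w}_{jh}$ paired against $\overline z_{jh}-\widehat{\overline z}_{jh}$. Everything else is mechanical pairing made automatic by the adjoint sign convention already encoded in the two test tuples.
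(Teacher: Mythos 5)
Your proposal is correct and follows essentially the same route as the paper: expand both operators from the definition \eqref{def_B}, cancel the mass, mixed, trace, and stabilization blocks by the built-in sign convention, and reduce the surviving convective terms via elementwise integration by parts together with the single-valuedness identity $\langle\bm\beta_j\cdot\bm n\,\widehat{\overline w}_{jh},\widehat{\overline z}_{jh}\rangle_{\partial\mathcal T_h}=0$. You actually make the final convective step more explicit than the paper, which simply asserts the last equality.
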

\begin{proof}
	By definition:
	\begin{align*}
		\hspace{1em}&\hspace{-1em} \mathscr B_j (\overline{\bm v}_{jh},\overline{w}_{jh},\widehat {\overline u}_{jh};\overline {\bm p}_{jh},-\overline z_{jh},-\widehat {\overline z}_{jh}) + \mathscr C_j (\overline {\bm p}_{jh},\overline z_{jh},\widehat {\overline z}_{jh};-\overline{\bm v}_{jh},\overline{w}_{jh},\widehat {\overline u}_{jh})\\
		&=(c_j\overline{\bm v}_{jh}, \overline {\bm p}_{jh})_{{\mathcal{T}_h}}- (\overline{w}_{jh}, \nabla\cdot \overline {\bm p}_{jh})_{{\mathcal{T}_h}}+\langle \widehat {\overline{w}}_{jh}, \overline {\bm p}_{jh} \cdot \bm{n} \rangle_{\partial{{\mathcal{T}_h}}} - (\nabla\cdot \overline{\bm v}_{jh}, \overline z_{jh})_{{\mathcal{T}_h}} \\
		&\quad - \langle h_K^{-1}(P_M \overline{w}_{jh} - \widehat {\overline{w}}_{jh}) , P_M \overline z_{jh} -\widehat {\overline z}_{jh} \rangle_{\partial{{\mathcal{T}_h}}} + \langle  \overline{\bm v}_{jh} \cdot \bm n,\widehat {\overline z}_{jh}\rangle_{\partial \mathcal T_h} \\
		&\quad  +( \bm \beta_j \overline{w}_{jh}, \nabla \overline z_{jh})_{{\mathcal{T}_h}} + (\nabla\cdot\bm \beta_j \overline{w}_{jh}, \overline z_{jh})_{\mathcal T_h} - \langle \bm{\beta}_j\cdot\bm{n}\widehat {\overline{w}}_{jh},  \overline z_{jh} \rangle_{\partial{{\mathcal{T}_h}}}\\
		&\quad-(c_j\bm{p}_h, \overline{\bm v}_{jh})_{{\mathcal{T}_h}}+ (\overline z_{jh}, \nabla\cdot \overline{\bm v}_{jh})_{{\mathcal{T}_h}} -\langle \widehat{\overline z}_{jh}, \overline{\bm v}_{jh} \cdot \bm{n} \rangle_{\partial{{\mathcal{T}_h}}}  + (\nabla\cdot \overline{\bm{p}}_{jh},  \overline{w}_{jh})_{{\mathcal{T}_h}} \\
		&\quad+\langle h_K^{-1}(P_M \overline z_{jh} - \widehat {\overline z}_{jh}), P_M \overline{w}_{jh} -\widehat {\overline{w}}_{jh} \rangle_{\partial{{\mathcal{T}_h}}} + \langle\overline {\bm p}_{jh} \cdot\bm n, \widehat {\overline{w}}_{jh} \rangle_{\partial{{\mathcal{T}_h}}}\\
		&\quad + ( \bm \beta_j \overline z_{jh}, \nabla \overline{w}_{jh})_{{\mathcal{T}_h}} - \langle \bm{\beta}_j\cdot\bm{n} \widehat {\overline z}_{jh},  \overline{w}_{jh} \rangle_{\partial{{\mathcal{T}_h}}}\\
		& = ( \bm \beta_j \overline{w}_{jh}, \nabla \overline z_{jh})_{{\mathcal{T}_h}} + (\nabla\cdot\bm \beta_j \overline{w}_{jh}, \overline z_{jh})_{\mathcal T_h} - \langle \bm{\beta}_j\cdot\bm{n} \widehat {\overline{w}}_{jh},  \overline z_{jh} \rangle_{\partial{{\mathcal{T}_h}}}\\
		&\quad + ( \bm \beta_j \overline z_{jh}, \nabla \overline{w}_{jh})_{{\mathcal{T}_h}} - \langle \bm{\beta}_j\cdot\bm{n} \widehat {\overline z}_{jh},  \overline{w}_{jh} \rangle_{\partial{{\mathcal{T}_h}}}\\
		& = \langle  \bm{\beta}_j\cdot\bm n (\overline{w}_{jh} - \widehat {\overline{w}}_{jh}), \overline z_{jh}-\widehat {\overline z}_{jh} \rangle_{\partial \mathcal T_h}.
	\end{align*}
\end{proof}

\subsection{Proof of Main Result}
\subsubsection{Step 1:  Error equation} \label{subsec:proof_step1}
\begin{lemma}\label{lemma:step1_first_lemma}
	For $\varepsilon_{jh}^{\bm q} = \bm{\Pi}_k\bm q_j - \overline{\bm q}_{jh}$, $\varepsilon_{jh}^{u} = {\Pi}_{k+1} u_j - \overline u_{jh}$ and $\varepsilon_{jh}^{\widehat { u}} = P_M u_j - \widehat {\overline u}_{jh}$, we have
	\begin{align}\label{error_y}
		\begin{split}
			\hspace{1em}&\hspace{-1em}\mathscr B_j(\varepsilon^{\bm q}_{jh},\varepsilon^u_{jh},\varepsilon^{\widehat u}_{jh};\bm r_j,w_j,\mu_j)\\
			&=  \langle  (\bm \Pi_k \bm q_j - \bm q_j)\cdot\bm n, w_j-\mu_j \rangle_{\partial \mathcal T_h} +\left\langle h^{-1}_K ( \Pi_{k+1} u_j- u_j), P_M w_j - \mu_j\right\rangle_{\partial {\mathcal{T}_h}}\\
			&\quad -( \bm \beta (\Pi_{k+1} u_j - u_j), \nabla w_j)_{{\mathcal{T}_h}} - (\nabla\cdot\bm \beta (\Pi_{k+1} u_j - u_j), w_j)_{\mathcal T_h}\\
			&\quad +\langle \bm{\beta}\cdot\bm{n} (P_M u_j - u_j) , w_j-\mu_j \rangle_{\partial{{\mathcal{T}_h}}}.
		\end{split}
	\end{align}
\end{lemma}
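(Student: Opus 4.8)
The plan is to obtain \eqref{error_y} from the linearity of $\mathscr B_j$ in its first three arguments together with the definition \eqref{HDG_full_discrete} of the elliptic projection. Since $\mathscr B_j$ is linear in $(\overline{\bm q}_{jh},\overline u_{jh},\widehat{\overline u}_{jh})$, I write
\[
\mathscr B_j(\varepsilon^{\bm q}_{jh},\varepsilon^u_{jh},\varepsilon^{\widehat u}_{jh};\bm r_j,w_j,\mu_j)
=\mathscr B_j(\bm\Pi_k\bm q_j,\Pi_{k+1}u_j,P_Mu_j;\bm r_j,w_j,\mu_j)
-\mathscr B_j(\overline{\bm q}_{jh},\overline u_{jh},\widehat{\overline u}_{jh};\bm r_j,w_j,\mu_j),
\]
and the last term equals $(f_j-\partial_t u_j,w_j)_{\mathcal T_h}$ by \eqref{HDG_full_discrete}. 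Hence everything reduces to evaluating $\mathscr B_j$ on the projected exact solution $(\bm\Pi_k\bm q_j,\Pi_{k+1}u_j,P_Mu_j)$ and verifying that it equals $(f_j-\partial_t u_j,w_j)_{\mathcal T_h}$ plus the right-hand side of \eqref{error_y}.

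I would expand $\mathscr B_j(\bm\Pi_k\bm q_j,\Pi_{k+1}u_j,P_Mu_j;\cdot)$ block by block, using the orthogonality \eqref{L2} to trade a projected factor for the exact function whenever its companion is a polynomial of admissible degree. In the part tested against $\bm r_j$, the facts $\nabla\cdot\bm r_j\in\mathcal P^{k-1}(K)$ and $\bm r_j\cdot\bm n|_e\in\mathcal P^{k}(e)$ let \eqref{L2_do}--\eqref{L2_edge} turn $-(\Pi_{k+1}u_j,\nabla\cdot\bm r_j)_{\mathcal T_h}+\langle P_Mu_j,\bm r_j\cdot\bm n\rangle_{\partial\mathcal T_h}$ into $(\nabla u_j,\bm r_j)_{\mathcal T_h}$ after an element-wise integration by parts; the first equation of \eqref{convection_pde} rewrites this as $-(c_j\bm q_j,\bm r_j)_{\mathcal T_h}$, so the block collapses against the $c_j$-term up to the flux projection error $(c_j(\bm\Pi_k\bm q_j-\bm q_j),\bm r_j)_{\mathcal T_h}$, which vanishes by the defining property of the flux projection and matches the absence of an $\bm r_j$-term in \eqref{error_y}. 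In the part built from $(\nabla\cdot\bm\Pi_k\bm q_j,w_j)_{\mathcal T_h}-\langle\bm\Pi_k\bm q_j\cdot\bm n,\mu_j\rangle_{\partial\mathcal T_h}$, integration by parts together with $\nabla w_j\in[\mathcal P^{k}(K)]^d$ (so that $(\bm\Pi_k\bm q_j,\nabla w_j)_{\mathcal T_h}=(\bm q_j,\nabla w_j)_{\mathcal T_h}$) yields $(\nabla\cdot\bm q_j,w_j)_{\mathcal T_h}$ plus the jump $\langle(\bm\Pi_k\bm q_j-\bm q_j)\cdot\bm n,w_j\rangle_{\partial\mathcal T_h}$; since $\bm q_j$ is $H(\mathrm{div})$-conforming and $\mu_j$ is single valued with $\mu_j|_{\partial\Omega}=0$, one has $\langle\bm q_j\cdot\bm n,\mu_j\rangle_{\partial\mathcal T_h}=0$, so I may subtract $\mu_j$ freely and recover both the volume term $(\nabla\cdot\bm q_j,w_j)_{\mathcal T_h}$ (to be absorbed below) and the first right-hand side term $\langle(\bm\Pi_k\bm q_j-\bm q_j)\cdot\bm n,w_j-\mu_j\rangle_{\partial\mathcal T_h}$. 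The stabilization term is immediate, since $P_M\Pi_{k+1}u_j-P_Mu_j=P_M(\Pi_{k+1}u_j-u_j)$ and the self-adjointness of $P_M$ on $\mathcal P^k(e)$ give $\langle h_K^{-1}(\Pi_{k+1}u_j-u_j),P_Mw_j-\mu_j\rangle_{\partial\mathcal T_h}$.

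The convective block $-(\bm\beta_j\Pi_{k+1}u_j,\nabla w_j)_{\mathcal T_h}-(\nabla\cdot\bm\beta_j\,\Pi_{k+1}u_j,w_j)_{\mathcal T_h}+\langle\bm\beta_j\cdot\bm n\,P_Mu_j,w_j\rangle_{\partial\mathcal T_h}$ is the delicate piece, and together with the surface bookkeeping it is where I expect the main obstacle. Invoking the second equation of \eqref{convection_pde} in the form $(f_j-\partial_t u_j,w_j)_{\mathcal T_h}=(\nabla\cdot\bm q_j,w_j)_{\mathcal T_h}+(\bm\beta_j\cdot\nabla u_j,w_j)_{\mathcal T_h}$ disposes of the leftover $(\nabla\cdot\bm q_j,w_j)_{\mathcal T_h}$ from the flux block and leaves a comparison against $(\bm\beta_j\cdot\nabla u_j,w_j)_{\mathcal T_h}$. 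Writing $\bm\beta_j\cdot\nabla u_j=\nabla\cdot(\bm\beta_j u_j)-(\nabla\cdot\bm\beta_j)u_j$ and integrating by parts, the residual of the convective block becomes $-(\bm\beta_j(\Pi_{k+1}u_j-u_j),\nabla w_j)_{\mathcal T_h}-(\nabla\cdot\bm\beta_j(\Pi_{k+1}u_j-u_j),w_j)_{\mathcal T_h}+\langle\bm\beta_j\cdot\bm n(P_Mu_j-u_j),w_j\rangle_{\partial\mathcal T_h}$; to upgrade the last surface term to $\langle\bm\beta_j\cdot\bm n(P_Mu_j-u_j),w_j-\mu_j\rangle_{\partial\mathcal T_h}$ I use again that $u_j$, hence $P_Mu_j$, is single valued while $\bm n$ flips sign across interior faces, so that $\langle\bm\beta_j\cdot\bm n(P_Mu_j-u_j),\mu_j\rangle_{\partial\mathcal T_h}=0$. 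The hardest part is exactly this interface accounting: consistently deciding which single-element face integrals cancel by $H(\mathrm{div})$-conformity of $\bm q_j$ or by skew-symmetry of $\bm n$ (and may therefore be paired with $-\mu_j$), versus those that persist as projection jumps, while keeping the polynomial degrees aligned so that every substitution from \eqref{L2} is legitimate. Collecting the three blocks then produces \eqref{error_y}.
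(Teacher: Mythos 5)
Your overall route is the same as the paper's: evaluate $\mathscr B_j$ at the projected exact solution $(\bm\Pi_k\bm q_j,\Pi_{k+1}u_j,P_Mu_j)$, use the orthogonality \eqref{L2} and the exact weak form of \eqref{convection_pde} to reduce everything to projection residuals, and subtract \eqref{HDG_full_discrete}. The block-by-block bookkeeping you describe --- the jump term $\langle(\bm\Pi_k\bm q_j-\bm q_j)\cdot\bm n,w_j-\mu_j\rangle_{\partial\mathcal T_h}$, the stabilization term, the convective residual, and the single-valuedness/skew-symmetry argument for appending $-\mu_j$ to the face terms --- is exactly what the paper does.

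There is, however, one genuine error. You assert that the residual of the first block, $(c_j(\bm\Pi_k\bm q_j-\bm q_j),\bm r_j)_{\mathcal T_h}$, ``vanishes by the defining property of the flux projection.'' It does not: $\bm\Pi_k$ is the plain $L^2$ projection onto $[\mathcal P^k(K)]^d$, so $(\bm\Pi_k\bm q_j-\bm q_j,\bm v)_K=0$ only for $\bm v\in[\mathcal P^k(K)]^d$, and $c_j\bm r_j$ is not such a polynomial unless $c_j$ is elementwise constant --- here $c_j=c_j(\bm x,t)$ is a general coefficient, and $\bm\Pi_k$ is not a weighted projection adapted to it. The term therefore survives on the right-hand side of the error equation. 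Indeed the paper's own proof ends with $(c_j(\bm\Pi_k\bm q_j-\bm q_j),\bm r_j)_{\mathcal T_h}$ present, and it reappears in the subsequent estimates as $R_2=(c_j(\bm\Pi_k\bm q_j-\bm q_j),\varepsilon_{jh}^{\bm q})_{\mathcal T_h}$ in the energy argument and inside $R_1+R_2$ in the duality argument, where it is controlled by subtracting $\Pi_0 c_j$ to gain a factor of $h$. (The displayed identity \eqref{error_y} omits this term, but that appears to be a typographical slip in the statement; the proof and every later use of the lemma keep it.) As written, your argument is only valid for piecewise-constant $c_j$; the fix is simply to retain $(c_j(\bm\Pi_k\bm q_j-\bm q_j),\bm r_j)_{\mathcal T_h}$ on the right-hand side rather than claiming it cancels.
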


\begin{proof}
	By the definition of operator $\mathscr B_j$  in \eqref{def_B}, we have
	\begin{align*}
		\hspace{0.1em}&\hspace{-0.1em}  \mathscr B_j (\bm\Pi_k \bm q_j,\Pi_{k+1} u_j,P_Mu_j,\bm r_j,w_j,\mu_j)\\
		&= (c_j\bm\Pi_k \bm q_j,\bm{r}_j)_{\mathcal{T}_h}-(\Pi_{k+1} u_j,\nabla\cdot\bm{r}_j)_{\mathcal{T}_h}+\left\langle P_M u_j, \bm{r}_j\cdot \bm{n}\right\rangle_{\partial\mathcal{T}_h}\\
		&\quad + (\nabla \cdot(\bm\Pi_k\bm q_j),  w_j)_{\mathcal{T}_h}+\left\langle h_K^{-1} ( \Pi_{k+1} u_j- u_j), P_M w_j - \mu_j\right\rangle_{\partial {\mathcal{T}_h}}  - \left\langle \bm\Pi_k \bm q_j\cdot \bm{n}, \mu_j\right\rangle_{\partial {\mathcal{T}_h}}\\
		&\quad-( \bm \beta_j \Pi_{k+1} u_j, \nabla w_j)_{{\mathcal{T}_h}} - (\nabla\cdot\bm \beta_j \Pi_{k+1} u_j, w_j)_{\mathcal T_h}+\langle \bm{\beta}_j\cdot\bm{n} P_M u_j , w_j \rangle_{\partial{{\mathcal{T}_h}}}\\
		&= (c_j (\bm \Pi_k \bm q_j - \bm q_j),\bm{r}_j)_{\mathcal{T}_h}  + (c_j\bm q_j,\bm{r}_j)_{\mathcal{T}_h}  -(u_j,\nabla\cdot\bm{r}_j)_{\mathcal{T}_h}+\left\langle u_j, \bm{r}_j\cdot \bm{n}\right\rangle_{\partial\mathcal{T}_h} \\
		&\quad  + \langle  \bm \Pi_k \bm q_j \cdot\bm n, w_j  \rangle_{\partial \mathcal T_h}- (\bm\Pi_k\bm q_j,  \nabla w_j)_{\mathcal{T}_h}   +\left\langle h^{-1}_K ( \Pi_{k+1} u_j- u_j), P_M w_j - \mu_j\right\rangle_{\partial {\mathcal{T}_h}} \\
		&\quad - \left\langle \bm\Pi_k \bm q_j \cdot \bm{n}, \mu_j\right\rangle_{\partial {\mathcal{T}_h}}-( \bm \beta_j (\Pi_{k+1} u_j - u_j), \nabla w_j)_{{\mathcal{T}_h}} + (\bm{\beta}_j \nabla u_j, w_j)_{\mathcal T_h}\\
		&\quad - (\nabla\cdot\bm \beta_j (\Pi_{k+1} u_j - u_j), w_j)_{\mathcal T_h}+\langle \bm{\beta}\cdot\bm{n} (P_M u_j - u_j) , w_j - \mu_j \rangle_{\partial{{\mathcal{T}_h}}}\\
		&=  (c_j (\bm \Pi_k \bm q_j - \bm q_j),\bm{r})_{\mathcal{T}_h}  +(\bm q_j,\bm{r}_j)_{\mathcal{T}_h}  -( u_j,\nabla\cdot\bm{r}_j)_{\mathcal{T}_h}+\left\langle u_j, \bm{r}_j\cdot \bm{n}\right\rangle_{\partial\mathcal{T}_h}\\
		&\quad + \langle  \bm q_j\cdot\bm n, w_j  \rangle_{\partial \mathcal T_h}  + \langle  (\bm \Pi_k \bm q_j - \bm q_j)\cdot\bm n, w_j  \rangle_{\partial \mathcal T_h} - (\bm q_j,  \nabla w_j)_{\mathcal{T}_h}   \\
		&\quad+\left\langle h^{-1}_K ( \Pi_{k+1} u_j- u_j), P_M w_j - \mu_j\right\rangle_{\partial {\mathcal{T}_h}}  - \left\langle (\bm\Pi_k \bm q_j - \bm q_j)\cdot \bm{n}, \mu_j\right\rangle_{\partial {\mathcal{T}_h}}\\
		&\quad - ( \bm \beta_j (\Pi_{k+1} u_j - u_j), \nabla w_j)_{{\mathcal{T}_h}} + (\bm{\beta}_j \nabla u_j, w_j)_{\mathcal T_h}\\
		&\quad- (\nabla\cdot\bm \beta_j (\Pi_{k+1} u_j - u_j), w_j)_{\mathcal T_h}+\langle \bm{\beta}_j\cdot\bm{n} (P_M u_j - u_j) , w_j-\mu_j \rangle_{\partial{{\mathcal{T}_h}}}.
	\end{align*}
	Note that the exact state $ u_j $ and exact flux $ \bm{q}_j $ satisfy
	\begin{align*}
		(c_j\bm{q}_j,\bm{r}_j)_{\mathcal{T}_h}-(u_j,\nabla\cdot \bm{r}_j)_{\mathcal{T}_h}+\left\langle{u}_j,\bm r_j\cdot \bm n \right\rangle_{\partial {\mathcal{T}_h}} &= 0,\\
		-(\bm{q}_j,\nabla w_j)_{\mathcal{T}_h}+\left\langle {\bm{q}}_j\cdot \bm{n},w_j\right\rangle_{\partial {\mathcal{T}_h}} + (\bm{\beta}_j \nabla u_j, w_j)_{\mathcal T_h}&= (f_j - \partial_t u_j,w_j)_{\mathcal T_h},\\
		\left\langle {\bm{q}}_j\cdot \bm{n},\mu_j\right\rangle_{\partial {\mathcal{T}_h}} &=0
	\end{align*}
	for all $(\bm{r}_j,w_j,\mu_j)\in\bm{V}_h\times W_h\times M_h(0)$. Then we have
	\begin{align}\label{error_eq2}
		\begin{split}
			\hspace{1em}&\hspace{-1em} \mathscr B_j  (\bm\Pi_k \bm q_j,\Pi_{k+1} u_j ,P_Mu_j,\bm r_j,w_j,\mu_j) \\
			&=  (c_j (\bm \Pi_k \bm q_j - \bm q_j),\bm{r}_j)_{\mathcal{T}_h} + \langle  (\bm \Pi_k \bm q_j - \bm q_j)\cdot\bm n, w_j-\mu_j \rangle_{\partial \mathcal T_h} + (f_j - \partial_t u_j,w_j)_{\mathcal T_h} \\
			&\quad +\left\langle h^{-1}_K ( \Pi_{k+1} u_j- u_j), P_M w_j - \mu_j\right\rangle_{\partial {\mathcal{T}_h}}  -( \bm \beta_j (\Pi_{k+1} u_j - u_j), \nabla w_j)_{{\mathcal{T}_h}} \\
			&\quad - (\nabla\cdot\bm \beta_j (\Pi_{k+1} u_j - u_j), w_j)_{\mathcal T_h}+\langle \bm{\beta}\cdot\bm{n} (P_M u_j - u_j) , w_j - \mu_j \rangle_{\partial{{\mathcal{T}_h}}},
		\end{split}
	\end{align}
	subtract \eqref{HDG_full_discrete} from \eqref{error_eq2}, we have  
	\begin{align*}
		\hspace{1em}&\hspace{-1em}\mathscr B_j(\varepsilon^{\bm q}_{jh},\varepsilon^u_{jh},\varepsilon^{\widehat u}_{jh};\bm r_j,w_j,\mu_j)\\
		&=  (c_j (\bm \Pi_k \bm q_j - \bm q_j),\bm{r})_{\mathcal{T}_h} + \langle  (\bm \Pi_k \bm q_j - \bm q_j)\cdot\bm n, w_j-\mu_j \rangle_{\partial \mathcal T_h} \\
		&\quad +\left\langle h^{-1}_K ( \Pi_{k+1} u_j- u_j), P_M w_j - \mu_j\right\rangle_{\partial {\mathcal{T}_h}}  -( \bm \beta_j (\Pi_{k+1} u_j - u_j), \nabla w_j)_{{\mathcal{T}_h}} \\
		&\quad - (\nabla\cdot\bm \beta_j (\Pi_{k+1} u_j - u_j), w_j)_{\mathcal T_h}+\langle \bm{\beta}_j\cdot\bm{n} (P_M u_j - u_j) , w_j - \mu_j \rangle_{\partial{{\mathcal{T}_h}}}.
	\end{align*}
\end{proof}

\subsubsection{Step 2: Estimate for $\varepsilon_h^{ q}$} 
The proof of the following lemma is similar to a result established in \cite{Qiu_Shi_Convection_Diffusion_JSC_2016} and hence is omitted.
\begin{lemma}\label{nabla_ine}
	For all $j=1,2\cdots, J$ and $(\varepsilon^u_{jh},\varepsilon_{jh}^{\widehat u}) \in W_h\times M_h(0)$, we have
	\begin{align*}
		\hspace{1em}&\hspace{-1em} \|\nabla\varepsilon^u_{jh}\|_{\mathcal T_h}+  \|h_K^{-1/2} ({\varepsilon_{jh}^u -\varepsilon_{jh}^{\widehat u}})\|_{\partial \mathcal T_h}\\
		& \le C  \|\varepsilon^{\bm q}_{jh}\|_{\mathcal T_h}+C\|h_K^{-1/ 2}(P_M\varepsilon^u_{jh}-\varepsilon^{\widehat u}_{jh})\|_{\partial\mathcal T_h} + C\|\bm{\Pi}_k \bm q_j - \bm q_j\|_{\mathcal T_h}.
	\end{align*}
\end{lemma}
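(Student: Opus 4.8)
The plan is to test the first component of the error equation of \Cref{lemma:step1_first_lemma} against the discrete gradient of $\varepsilon^u_{jh}$. Since $\varepsilon^u_{jh}\in W_h$ is elementwise a polynomial of degree $k+1$, its elementwise gradient $\nabla\varepsilon^u_{jh}$ lies in $\bm V_h=[\mathcal P^k(K)]^d$, so it is an admissible test function. Taking $w_j=\mu_j=0$ and $\bm r_j=\nabla\varepsilon^u_{jh}$ isolates the flux equation
\begin{align*}
(c_j\varepsilon^{\bm q}_{jh},\nabla\varepsilon^u_{jh})_{\mathcal T_h}-(\varepsilon^u_{jh},\nabla\cdot\nabla\varepsilon^u_{jh})_{\mathcal T_h}+\langle\varepsilon^{\widehat u}_{jh},\nabla\varepsilon^u_{jh}\cdot\bm n\rangle_{\partial\mathcal T_h}=(c_j(\bm\Pi_k\bm q_j-\bm q_j),\nabla\varepsilon^u_{jh})_{\mathcal T_h},
\end{align*}
the right-hand side being the consistency term carried by the $L^2$ projection of the flux. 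Integrating the second term by parts elementwise rewrites $-(\varepsilon^u_{jh},\nabla\cdot\nabla\varepsilon^u_{jh})_{\mathcal T_h}$ as $\|\nabla\varepsilon^u_{jh}\|^2_{\mathcal T_h}-\langle\varepsilon^u_{jh},\nabla\varepsilon^u_{jh}\cdot\bm n\rangle_{\partial\mathcal T_h}$, which moves $\|\nabla\varepsilon^u_{jh}\|^2_{\mathcal T_h}$ to the left and leaves the face pairing $\langle\varepsilon^u_{jh}-\varepsilon^{\widehat u}_{jh},\nabla\varepsilon^u_{jh}\cdot\bm n\rangle_{\partial\mathcal T_h}$.

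The key observation is that $\nabla\varepsilon^u_{jh}\cdot\bm n$ is, face by face, a polynomial of degree $k$ (the faces are flat, so $\bm n$ is constant on each), hence by the defining property \eqref{L2_edge} of $P_M$ one may replace $\varepsilon^u_{jh}$ by $P_M\varepsilon^u_{jh}$ in that pairing. This gives
\begin{align*}
\|\nabla\varepsilon^u_{jh}\|^2_{\mathcal T_h}=-(c_j\varepsilon^{\bm q}_{jh},\nabla\varepsilon^u_{jh})_{\mathcal T_h}+\langle P_M\varepsilon^u_{jh}-\varepsilon^{\widehat u}_{jh},\nabla\varepsilon^u_{jh}\cdot\bm n\rangle_{\partial\mathcal T_h}+(c_j(\bm\Pi_k\bm q_j-\bm q_j),\nabla\varepsilon^u_{jh})_{\mathcal T_h}.
\end{align*}
I would then apply Cauchy--Schwarz termwise, use $c_j\in L^\infty$ to absorb the coefficient, and bound the face factor by $\|h_K^{1/2}\nabla\varepsilon^u_{jh}\cdot\bm n\|_{\partial\mathcal T_h}\le C\|\nabla\varepsilon^u_{jh}\|_{\mathcal T_h}$ via the inverse inequality \eqref{inverse_esti}. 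Dividing by $\|\nabla\varepsilon^u_{jh}\|_{\mathcal T_h}$ yields
\begin{align*}
\|\nabla\varepsilon^u_{jh}\|_{\mathcal T_h}\le C\|\varepsilon^{\bm q}_{jh}\|_{\mathcal T_h}+C\|h_K^{-1/2}(P_M\varepsilon^u_{jh}-\varepsilon^{\widehat u}_{jh})\|_{\partial\mathcal T_h}+C\|\bm\Pi_k\bm q_j-\bm q_j\|_{\mathcal T_h},
\end{align*}
which already delivers the gradient part of the claim together with all three terms on its right-hand side.

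For the remaining boundary part $\|h_K^{-1/2}(\varepsilon^u_{jh}-\varepsilon^{\widehat u}_{jh})\|_{\partial\mathcal T_h}$ I would split $\varepsilon^u_{jh}-\varepsilon^{\widehat u}_{jh}=(\varepsilon^u_{jh}-P_M\varepsilon^u_{jh})+(P_M\varepsilon^u_{jh}-\varepsilon^{\widehat u}_{jh})$ and use the triangle inequality; the second piece is already a target term, and the first is handled by the local estimate $\|h_K^{-1/2}(I-P_M)w_h\|_{\partial K}\le C\|\nabla w_h\|_K$ for $w_h\in\mathcal P^{k+1}(K)$, applied to $w_h=\varepsilon^u_{jh}$ and then combined with the displayed bound on $\|\nabla\varepsilon^u_{jh}\|_{\mathcal T_h}$. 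This local estimate is the step I expect to be the crux. It holds because $I-P_M$ annihilates constants (constants lie in $\mathcal P^k(e)$ for every $k\ge0$), so after scaling to the reference element it suffices to note that, on the finite-dimensional space $\mathcal P^{k+1}(K)$, any seminorm vanishing on the kernel of $\nabla$ is dominated by $\|\nabla\cdot\|_K$ with a mesh-independent constant. This is exactly the point where dropping the upwind term $\tau_j^C$ from the stabilization \eqref{stabilization_2} matters: retaining it would replace $P_Mu_{jh}-\widehat u_{jh}$ by $u_{jh}-\widehat u_{jh}$ and spoil the clean control of $(I-P_M)\varepsilon^u_{jh}$ when $k=0$. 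Collecting the two bounds proves the lemma.
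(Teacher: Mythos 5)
Your proof is correct and follows essentially the same route the paper itself uses for the directly analogous estimates in \Cref{ub} and \eqref{equal-uq} (the paper omits the proof of this particular lemma, deferring to Qiu--Shi): test the flux error equation with $\bm r_j=\nabla\varepsilon^u_{jh}$, integrate by parts, exploit that $\nabla\varepsilon^u_{jh}\cdot\bm n\in\mathcal P^k(e)$ to insert $P_M$, and finish with Cauchy--Schwarz and \eqref{inverse_esti}; you also correctly read the consistency term $(c_j(\bm\Pi_k\bm q_j-\bm q_j),\bm r_j)_{\mathcal T_h}$ off the end of the proof of \Cref{lemma:step1_first_lemma} rather than the (incomplete) statement of \eqref{error_y}. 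The only cosmetic caveat is that on general polyhedra the local bound $\|h_K^{-1/2}(I-P_M)\varepsilon^u_{jh}\|_{\partial K}\le C\|\nabla\varepsilon^u_{jh}\|_K$ is cleaner via $\|(I-P_M)\varepsilon^u_{jh}\|_{\partial K}\le\|\varepsilon^u_{jh}-\overline{\varepsilon^u_{jh}}\|_{\partial K}$ followed by \eqref{inverse_esti} and the elementwise Poincar\'e inequality, rather than a literal reference-element scaling.
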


The next lemma is based on energy arguments.
\begin{lemma}\label{lemma:step2_main_lemma1}
	For $h$ small enough, we have
	\begin{align*}
		\hspace{1em}&\hspace{-1em}\|\varepsilon_{jh}^{\bm{q}}\|_{\mathcal{T}_h}^2+\|h_K^{-1/2}({P_M\varepsilon_{jh}^u-\varepsilon_{jh}^{\widehat{u}}})\|_{\partial \mathcal T_h}^2 \\
		&\le C  \|\bm\Pi_k \bm q_j - \bm q_j\|_{\mathcal T_h}^2+ C \|h_K^{1/2}(\bm\Pi_k \bm q_j - \bm q_j)\|_{\partial\mathcal T_h}^2+ C\|h_K^{-1/2}(\Pi_{k+1} u_j - u_j) \|_{\partial \mathcal T_h}^2  \\
		&\quad +C\|h_K^{1/2}(P_Mu_j - u_j)\|_{\partial \mathcal T_h}^2 + C\|\Pi_{k+1} u_j - u_j \|_{ \mathcal T_h}^2.
	\end{align*}
\end{lemma}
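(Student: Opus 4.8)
The plan is to run the standard HDG energy argument on the projection-of-error identity \eqref{error_y} from \Cref{lemma:step1_first_lemma}, testing against the errors themselves, and then to invoke the coercivity estimate \Cref{VV} to dispose of the only term that is not sign-definite. First I would set $(\bm r_j,w_j,\mu_j)=(\varepsilon^{\bm q}_{jh},\varepsilon^u_{jh},\varepsilon^{\widehat u}_{jh})$ in \eqref{error_y}. Since $\varepsilon^{\widehat u}_{jh}=P_M u_j-\widehat{\overline u}_{jh}$ vanishes on $\mathcal E_h^\partial$, we have $\varepsilon^{\widehat u}_{jh}\in M_h(0)$, so \Cref{energy_norm} applies to the left-hand side and yields
\[
\mathscr B_j(\varepsilon^{\bm q}_{jh},\varepsilon^u_{jh},\varepsilon^{\widehat u}_{jh};\varepsilon^{\bm q}_{jh},\varepsilon^u_{jh},\varepsilon^{\widehat u}_{jh})=(c_j\varepsilon^{\bm q}_{jh},\varepsilon^{\bm q}_{jh})_{\mathcal T_h}+B,
\]
where $B$ gathers $\|h_K^{-1/2}(P_M\varepsilon^u_{jh}-\varepsilon^{\widehat u}_{jh})\|^2_{\partial\mathcal T_h}$ together with $-\tfrac12\langle\bm\beta_j\cdot\bm n(\varepsilon^u_{jh}-\varepsilon^{\widehat u}_{jh}),\varepsilon^u_{jh}-\varepsilon^{\widehat u}_{jh}\rangle_{\partial\mathcal T_h}$ and $-\tfrac12(\nabla\cdot\bm\beta_j\varepsilon^u_{jh},\varepsilon^u_{jh})_{\mathcal T_h}$. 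Using \Cref{beta} one checks that $B$ is \emph{exactly} the left-hand side of \Cref{VV} with $(w,\mu,\bm\gamma)=(\varepsilon^u_{jh},\varepsilon^{\widehat u}_{jh},\bm\beta_j)$. Hence, for $h$ small and using $c_j\ge c_0$ and $\nabla\cdot\bm\beta_j\le 0$, I would obtain the coercivity lower bound
\[
\mathscr B_j(\varepsilon^{\bm q}_{jh},\varepsilon^u_{jh},\varepsilon^{\widehat u}_{jh};\varepsilon^{\bm q}_{jh},\varepsilon^u_{jh},\varepsilon^{\widehat u}_{jh})\ge c_0\|\varepsilon^{\bm q}_{jh}\|^2_{\mathcal T_h}+\tfrac12\|h_K^{-1/2}(P_M\varepsilon^u_{jh}-\varepsilon^{\widehat u}_{jh})\|^2_{\partial\mathcal T_h}-Ch\|\nabla\varepsilon^u_{jh}\|^2_{\mathcal T_h}.
\]

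Next I would estimate the right-hand side of \eqref{error_y} term by term with the same test functions, using Cauchy--Schwarz, the inverse inequality \eqref{inverse_esti}, and the projection bounds of \Cref{lemmainter}. The boundary pairings are organized so that a factor $\|h_K^{1/2}(\bm\Pi_k\bm q_j-\bm q_j)\|_{\partial\mathcal T_h}$, $\|h_K^{-1/2}(\Pi_{k+1}u_j-u_j)\|_{\partial\mathcal T_h}$ or $\|h_K^{1/2}(P_Mu_j-u_j)\|_{\partial\mathcal T_h}$ is paired against $\|h_K^{-1/2}(\varepsilon^u_{jh}-\varepsilon^{\widehat u}_{jh})\|_{\partial\mathcal T_h}$ or $\|h_K^{-1/2}(P_M\varepsilon^u_{jh}-\varepsilon^{\widehat u}_{jh})\|_{\partial\mathcal T_h}$, while the volume pairings produce $\|\Pi_{k+1}u_j-u_j\|_{\mathcal T_h}$ tested against $\|\nabla\varepsilon^u_{jh}\|_{\mathcal T_h}$ and $\|\varepsilon^u_{jh}\|_{\mathcal T_h}$. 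The key tool is \Cref{nabla_ine}, which lets me replace $\|\nabla\varepsilon^u_{jh}\|_{\mathcal T_h}$ and $\|h_K^{-1/2}(\varepsilon^u_{jh}-\varepsilon^{\widehat u}_{jh})\|_{\partial\mathcal T_h}$ by $\|\varepsilon^{\bm q}_{jh}\|_{\mathcal T_h}$, $\|h_K^{-1/2}(P_M\varepsilon^u_{jh}-\varepsilon^{\widehat u}_{jh})\|_{\partial\mathcal T_h}$ and $\|\bm\Pi_k\bm q_j-\bm q_j\|_{\mathcal T_h}$; the leftover $\|\varepsilon^u_{jh}\|_{\mathcal T_h}$ is then eliminated via the discrete Poincar\'e--Friedrichs inequality \Cref{poincare}. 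After Young's inequality with suitably small parameters, every occurrence of $\|\varepsilon^{\bm q}_{jh}\|^2_{\mathcal T_h}$ and $\|h_K^{-1/2}(P_M\varepsilon^u_{jh}-\varepsilon^{\widehat u}_{jh})\|^2_{\partial\mathcal T_h}$ on the right --- including the $Ch\|\nabla\varepsilon^u_{jh}\|^2_{\mathcal T_h}$ remainder from the coercivity bound, which \Cref{nabla_ine} again converts into these same quantities times a factor of $h$ --- can be absorbed into the left-hand side, leaving precisely the five projection-error norms in the claimed estimate.

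The main obstacle is the non-sign-definite boundary convection term $-\tfrac12\langle\bm\beta_j\cdot\bm n(\varepsilon^u_{jh}-\varepsilon^{\widehat u}_{jh}),\varepsilon^u_{jh}-\varepsilon^{\widehat u}_{jh}\rangle_{\partial\mathcal T_h}$, exactly the contribution that caused the loss of superconvergence at $k=0$ in the upwind-stabilized scheme; here it is controlled by \Cref{VV} at the expense of the harmless term $Ch\|\nabla\varepsilon^u_{jh}\|^2_{\mathcal T_h}$, and the entire argument hinges on taking $h$ small enough that this term, together with all Young-inequality remainders, is absorbable. The secondary difficulty is purely one of bookkeeping: one must ensure that no uncontrolled $\|\varepsilon^u_{jh}\|_{\mathcal T_h}$ survives on the right, which is exactly why the combined use of \Cref{nabla_ine} and \Cref{poincare} is indispensable.
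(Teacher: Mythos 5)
Your proposal is correct and follows essentially the same route as the paper: test \eqref{error_y} with $(\varepsilon^{\bm q}_{jh},\varepsilon^u_{jh},\varepsilon^{\widehat u}_{jh})$, use \Cref{energy_norm} together with \Cref{VV} (applied with $(w,\mu,\bm\gamma)=(\varepsilon^u_{jh},\varepsilon^{\widehat u}_{jh},\bm\beta_j)$) to get coercivity up to the $Ch\|\nabla\varepsilon^u_{jh}\|^2_{\mathcal T_h}$ remainder, then bound the seven right-hand-side terms via Cauchy--Schwarz, Young, \Cref{nabla_ine} and \Cref{poincare}, and absorb for $h$ small. The paper's proof is exactly this, written out term by term as $R_1,\dots,R_7$.
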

\begin{proof}
	First, the basic property of $ \mathscr B_j $ in \Cref{energy_norm} and use $\nabla\cdot\bm \beta_j\le 0$ to get
	\begin{align*}
		\mathscr B_j(\varepsilon_{jh}^{\bm q},\varepsilon_{jh}^{u}, \varepsilon_{jh}^{\widehat u}; \varepsilon_{jh}^{\bm q},\varepsilon_{jh}^{ u}, \varepsilon_{jh}^{\widehat u}) &\ge (c_j\varepsilon_{jh}^{\bm{q}},\varepsilon_{jh}^{\bm{q}})_{\mathcal{T}_h}+ \|h_K^{-1/2}( {P_M\varepsilon_{jh}^u-\varepsilon_{jh}^{\widehat{u}}})\|_{\partial \mathcal T_h}^2 \\
		&\quad - \frac 12 
		\langle{\bm \beta}_j \cdot\bm n (\varepsilon_{jh}^u-\varepsilon_{jh}^{\widehat u}),  \varepsilon_{jh}^u-\varepsilon_{jh}^{\widehat u} \rangle_{\partial\mathcal{T}_h}. 
	\end{align*}
	Then, taking $(\bm r_j, w_j,\mu_j) = ( \varepsilon_{jh}^{\bm q},\varepsilon_{jh}^u,\varepsilon_{jh}^{\widehat u})$ in \eqref{error_y} and the 
	stability \eqref{V1} with $(\gamma,w,\mu)=(\bm\beta_j,\varepsilon_{jh}^u,\varepsilon_{jh}^{\widehat u})$, we have 
	\begin{align*}
		\hspace{1em}&\hspace{-1em}(c_j\varepsilon_{jh}^{\bm{q}},\varepsilon_{jh}^{\bm{q}})_{\mathcal{T}_h}+\frac{1}{2} \|h_K^{-1/2}({P_M\varepsilon_{jh}^u-\varepsilon_{jh}^{\widehat{u}}})\|_{\partial \mathcal T_h}^2\\
		&\le Ch\|\nabla \varepsilon_{jh}^u\|^2_{\mathcal{T}_h} +  (c_j (\bm \Pi_k \bm q_j - \bm q_j), \varepsilon_{jh}^{\bm{q}})_{\mathcal{T}_h}  +  \langle (\bm \Pi_k \bm q_j - \bm q_j)\cdot \bm n,\varepsilon_{jh}^u - \varepsilon_{jh}^{\widehat u} \rangle_{\partial\mathcal T_h}\\
		&\quad -  \langle h_K^{-1} (\Pi_{k+1} u_j -u_j ),P_M \varepsilon_{jh}^u - \varepsilon_{jh}^{\widehat u}\rangle_{\partial\mathcal T_h}-( \bm \beta_j (\Pi_{k+1} u_j - u_j), \nabla \varepsilon_{jh}^u)_{{\mathcal{T}_h}}\\
		&\quad   - (\nabla\cdot\bm \beta_j (\Pi_{k+1} u_j - u_j), \varepsilon_{jh}^u)_{\mathcal T_h}+\langle \bm{\beta}_j\cdot\bm{n} (P_M u_j - u_j) , \varepsilon_{jh}^u  - \varepsilon_{jh}^{\widehat u}\rangle_{\partial{{\mathcal{T}_h}}}\\
		& =:\sum_{i=1}^7R_i.
	\end{align*}
	Next, we estimate $\{R_i\}_{i=1}^7$ term by term. First, by \Cref{nabla_ine} and Young's inequality, we have 
	\begin{align*}
		R_1 &\le Ch\|\varepsilon^{\bm q}_{jh}\|_{\mathcal T_h}^2 +Ch\|h_K^{-1/ 2}(P_M\varepsilon^u_{jh}-\varepsilon^{\widehat u}_{jh})\|_{\partial\mathcal T_h}^2 + C\|\bm{\Pi}_k \bm q_j - \bm q_j\|_{\mathcal T_h}^2,\\
		R_3 &\le C\|h_K^{1/2}(\bm\Pi_k \bm q_j - \bm q_j)\|_{\partial\mathcal T_h}^2 + \frac{1}{16}\|\varepsilon^{\bm q}_{jh}\|_{\mathcal T_h}^2\\
		&\quad  +\frac{1}{16}\|h_K^{-1/ 2}(P_M\varepsilon^u_{jh}-\varepsilon^{\widehat u}_{jh})\|_{\partial\mathcal T_h}^2+ C\|\bm{\Pi}_k \bm q_j - \bm q_j\|_{\mathcal T_h}^2,\\
		R_5 &\le C \|\Pi_{k+1} u_j - u_j \|_{\mathcal T_h}^2 + \frac{1}{16}\|\varepsilon^{\bm q}_{jh}\|_{\mathcal T_h}^2 \\
		&\quad +\frac{1}{16} \|h_K^{-1/ 2}(P_M\varepsilon^u_{jh}-\varepsilon^{\widehat u}_{jh})\|_{\partial\mathcal T_h}^2+ C\|\bm{\Pi}_k \bm q_j - \bm q_j\|_{\mathcal T_h}^2,\\
		R_7 &\le C\|h_K^{1/2}(P_Mu_j - u_j)\|_{\partial \mathcal T_h} \|h_K^{-1/2} (\varepsilon_{jh}^u - \varepsilon_{jh}^{\widehat u})\|_{\partial{{\mathcal{T}_h}}}\\
		&\le C\|h_K^{1/2}(P_Mu_j - u_j)\|_{\partial \mathcal T_h}^2 +  \frac{1}{16}\|\varepsilon^{\bm q}_{jh}\|_{\mathcal T_h}^2\\
		&\quad  +\frac{1}{16}\|h_K^{-1/ 2}(P_M\varepsilon^u_{jh}-\varepsilon^{\widehat u}_{jh})\|_{\partial\mathcal T_h}^2+ C\|\bm{\Pi}_k \bm q_j - \bm q_j\|_{\mathcal T_h}^2.
	\end{align*}
	Young's inequality for the terms $R_2$ and $R_4$,
	\begin{align*}
		R_2 &\le C\|\bm\Pi_k \bm q_j - \bm q_j\|_{\mathcal T_h}^2 + \frac{1}{16}\|\varepsilon^{\bm q}_{jh}\|_{\mathcal T_h}^2\\
		R_4 &\le C \|h_K^{-1/2}(\Pi_{k+1} u_j - u_j) \|_{\partial \mathcal T_h}^2  +\frac{1}{16} \|h_K^{-1/ 2}(P_M\varepsilon^u_{jh}-\varepsilon^{\widehat u}_{jh})\|_{\partial\mathcal T_h}^2.
	\end{align*}
	For the term $R_6$, using the Poincar\'e inequality \Cref{poincare} and \Cref{nabla_ine}, we have 
	\begin{align*}
		R_6 &\le C\|\Pi_{k+1} u_j - u_j \|_{ \mathcal T_h}(\|\nabla \varepsilon_{jh}^u\|_{\mathcal T_h} +  \|h_K^{-1/2}(\varepsilon^u_{jh}-\varepsilon^{\widehat u}_{jh})\|_{\partial\mathcal T_h})\\
		&\le C \|\Pi_{k+1} u_j - u_j \|_{ \mathcal T_h}^2   + \frac{1}{16}\|\varepsilon^{\bm q}_{jh}\|_{\mathcal T_h}^2\\
		&\quad  +\frac{1}{16}\|h_K^{-1/ 2}(P_M\varepsilon^u_{jh}-\varepsilon^{\widehat u}_{jh})\|_{\partial\mathcal T_h}^2+ C\|\bm{\Pi}_k \bm q_j - \bm q_j\|_{\mathcal T_h}^2.
	\end{align*}	
	Sum all the estimates above
	, and let $h$ small enough, we get 
	\begin{align*}
		\hspace{1em}&\hspace{-1em}\|\varepsilon_{jh}^{\bm{q}}\|_{\mathcal{T}_h}^2+\|h_K^{-1/2}({P_M\varepsilon_{jh}^u-\varepsilon_{jh}^{\widehat{u}}})\|_{\partial \mathcal T_h}^2 \\
		&\le C \|\bm\Pi_k \bm q_j - \bm q_j\|_{\mathcal T_h}^2+ C\|h_K^{1/2}(\bm\Pi_k \bm q_j - \bm q_j)\|_{\partial\mathcal T_h}^2+ C\|h_K^{-1/2}(\Pi_{k+1} u_j - u_j) \|_{\partial \mathcal T_h}^2  \\
		&\quad + C\|h_K^{1/2}(P_Mu_j - u_j)\|_{\partial \mathcal T_h}^2 + C\|\Pi_{k+1} u_j - u_j \|_{ \mathcal T_h}^2.
	\end{align*}
\end{proof}

As a consequence, a simple application of the triangle inequality gives optimal convergence rates for $\|\bm q_j -\overline{\bm q}_{jh}\|_{\mathcal T_h}$:
\begin{lemma}\label{lemma:step2_conv_rates}
	We have
	\begin{align}
		\|\bm q_j -\overline{\bm q}_{jh}\|_{\mathcal T_h}&\le \| \bm q_j - \bm \Pi_k \bm q_j\|_{\mathcal T_h} + \|\bm \Pi_k \bm q_j - \overline{\bm q}_{jh}\|_{\mathcal T_h} \le C  h^{k+1}.
	\end{align}
\end{lemma}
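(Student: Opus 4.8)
The plan is to read the estimate straight off the triangle inequality already written in the statement, controlling the two resulting pieces separately. First I would bound the interpolation error $\|\bm q_j - \bm\Pi_k \bm q_j\|_{\mathcal T_h}$ by invoking the element estimate \eqref{lemmainter_orthoo} of \Cref{lemmainter} with $\ell = k$; under the smoothness hypotheses assumed at the start of \Cref{errroanalysis} this gives $\|\bm q_j - \bm\Pi_k \bm q_j\|_{\mathcal T_h} \le C h^{k+1}$. The second piece is exactly $\|\varepsilon_{jh}^{\bm q}\|_{\mathcal T_h}$, so it suffices to feed the output of \Cref{lemma:step2_main_lemma1} into the right-hand side and verify that each of its five terms is $O(h^{k+1})$.

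Concretely, I would estimate those five terms one by one. The element term $\|\bm\Pi_k \bm q_j - \bm q_j\|_{\mathcal T_h}$ is $O(h^{k+1})$ as above, while $\|\Pi_{k+1} u_j - u_j\|_{\mathcal T_h}$ is $O(h^{k+2})$ by \eqref{lemmainter_orthoo} with $\ell = k+1$, which is more than enough. The three boundary terms require the trace (scaled) versions of the projection estimates: the trace bound $\|\bm q_j - \bm\Pi_k \bm q_j\|_{\partial K} \le C h_K^{k+1/2} |\bm q_j|_{k+1,K}$ yields $\|h_K^{1/2}(\bm\Pi_k \bm q_j - \bm q_j)\|_{\partial\mathcal T_h} \le C h^{k+1}$; the edge estimate \eqref{lemmainter_orthoe} yields $\|h_K^{1/2}(P_M u_j - u_j)\|_{\partial\mathcal T_h} \le C h^{k+1}$; and the trace bound $\|u_j - \Pi_{k+1} u_j\|_{\partial K} \le C h_K^{k+3/2} |u_j|_{k+2,K}$ yields $\|h_K^{-1/2}(\Pi_{k+1} u_j - u_j)\|_{\partial\mathcal T_h} \le C h^{k+1}$. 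Summing these and taking square roots gives $\|\varepsilon_{jh}^{\bm q}\|_{\mathcal T_h} \le C h^{k+1}$.

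Combining the two bounds through the triangle inequality closes the argument. Since the heavy lifting, namely the energy inequality of \Cref{lemma:step2_main_lemma1}, is already in hand, there is no genuine analytic obstacle here; the only point demanding care is the bookkeeping of the powers of $h_K$ in the three boundary terms, making sure each pairing of a projection trace estimate with the weight $h_K^{\pm 1/2}$ lands exactly at the rate $h^{k+1}$. The governing (slowest) contribution is the flux interpolation error itself, which caps the overall rate at $h^{k+1}$, whereas every term originating from the scalar variable is of strictly higher order in $h$.
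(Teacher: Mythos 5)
Your proposal is correct and follows exactly the paper's route: the triangle inequality splits the error into the projection error, controlled by \eqref{lemmainter_orthoo}, and $\|\varepsilon_{jh}^{\bm q}\|_{\mathcal T_h}$, controlled by feeding standard (scaled trace) projection estimates into the right-hand side of \Cref{lemma:step2_main_lemma1}. You have in fact supplied more detail than the paper, whose proof is the single sentence that the result is ``a simple application of the triangle inequality,'' and your bookkeeping of the $h_K^{\pm 1/2}$ weights is accurate.
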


\subsubsection{Step 3: Estimate for $\varepsilon_{jh}^{u}$ by a duality argument}  

The next step is the consideration of the dual problems:

\begin{align}
	c_j\bm{\Phi}_j+\nabla\Psi_j&=0&\text{in}\ \Omega,\nonumber\\
	\nabla\cdot\bm \Phi_j-\bm{\beta}_j\cdot\nabla\Psi_j  &=\Theta_j&\text{in}\ \Omega,\label{Dual_PDE1}\\
	\Psi_j &= 0&\text{on}\ \partial\Omega\nonumber.
\end{align}

{\bfseries{Elliptic regularity}.} Since the domain $\Omega$ is convex, we have the following regularity estimate
\begin{align}\label{reg_e}
	\norm{\bm\Phi_j}_{[H^1(\Omega)]^d} + \norm{\Psi_j}_{H^2(\Omega)} \le C_{\textup{reg}} \norm{\Theta_j}_{L^2(\Omega)}.
\end{align}

With the above dual problems \eqref{Dual_PDE1} and regularity \eqref{reg_e}, we can derive the following error estimates.
\begin{lemma}\label{lemma:step3_first_lemma}
	For $h$ small enough, we have
	\begin{align*}
		\|\varepsilon^u_{jh}\|_{\mathcal{T}_h}
		&\le C h^{3/2} \|\bm \Pi_k \bm q_j - \bm q_j\|_{\partial\mathcal  T_h} + Ch \|\bm \Pi_k \bm q_j - \bm q_j\|_{\mathcal  T_h}+Ch  \|\varepsilon_{jh}^{\bm q}\|_{\mathcal T_h}\\
		&\quad + C h\| h_K^{-1}(\Pi_{k+1} u_j -  u_j)\|_{\partial\mathcal  T_h}+ Ch\|h_K^{-1/2}(\varepsilon_{jh}^u - \varepsilon_{jh}^{\widehat u})\|_{\partial\mathcal  T_h}   \\
		&\quad  + Ch\|h_K^{-1/2}(P_M\varepsilon_{jh}^u - \varepsilon_{jh}^{\widehat u})\|_{\partial\mathcal  T_h} + Ch^{3/2}\|P_M u_j - u_j \|_{\partial \mathcal T_h} \\
		&\quad + C\|\Pi_{k+1} u_j -u_j\|_{\mathcal T_h}.
	\end{align*}
\end{lemma}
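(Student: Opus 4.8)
The plan is to run an Aubin--Nitsche duality argument against the adjoint problem \eqref{Dual_PDE1}. First I would take $\Theta_j=\varepsilon_{jh}^u$, so that $\|\varepsilon_{jh}^u\|_{\mathcal T_h}^2=(\varepsilon_{jh}^u,\Theta_j)_{\mathcal T_h}$, and introduce the HDG approximation $(\bm\Phi_{jh},\Psi_{jh},\widehat\Psi_{jh})\in\bm V_h\times W_h\times M_h(0)$ of the dual solution, defined by $\mathscr C_j(\bm\Phi_{jh},\Psi_{jh},\widehat\Psi_{jh};\bm r_j,w_j,\mu_j)=(\varepsilon_{jh}^u,w_j)_{\mathcal T_h}$ for all test tuples. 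Choosing $(\bm r_j,w_j,\mu_j)=(-\varepsilon_{jh}^{\bm q},\varepsilon_{jh}^u,\varepsilon_{jh}^{\widehat u})$ in this defining relation reproduces $\|\varepsilon_{jh}^u\|_{\mathcal T_h}^2$, while \Cref{eq_B} with $(\overline{\bm v}_{jh},\overline w_{jh},\widehat{\overline u}_{jh})=(\varepsilon_{jh}^{\bm q},\varepsilon_{jh}^u,\varepsilon_{jh}^{\widehat u})$ and $(\overline{\bm p}_{jh},\overline z_{jh},\widehat{\overline z}_{jh})=(\bm\Phi_{jh},\Psi_{jh},\widehat\Psi_{jh})$ collapses the pair of operator evaluations into the single boundary pairing $\langle\bm\beta_j\cdot\bm n(\varepsilon_{jh}^u-\varepsilon_{jh}^{\widehat u}),\Psi_{jh}-\widehat\Psi_{jh}\rangle_{\partial\mathcal T_h}$.

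Next I would feed the primal error equation \eqref{error_y} with the test tuple $(\bm\Phi_{jh},-\Psi_{jh},-\widehat\Psi_{jh})$. This replaces the $\mathscr B_j$ evaluation by the five projection-residual pairings on the right of \eqref{error_y}, giving an \emph{exact} expression for $\|\varepsilon_{jh}^u\|_{\mathcal T_h}^2$ as a sum of volume and face pairings between the primal residuals $\bm\Pi_k\bm q_j-\bm q_j$, $\Pi_{k+1}u_j-u_j$, $P_Mu_j-u_j$ (and the primal discrete jump $\varepsilon_{jh}^u-\varepsilon_{jh}^{\widehat u}$) against the dual quantities $\Psi_{jh}$, $\nabla\Psi_{jh}$, $P_M\Psi_{jh}-\widehat\Psi_{jh}$, $\Psi_{jh}-\widehat\Psi_{jh}$. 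Applying the energy analysis of \Cref{lemma:step2_main_lemma1} and \Cref{nabla_ine} to the \emph{dual} problem, together with the elliptic regularity \eqref{reg_e} and the approximation bounds of \Cref{lemmainter} for the smooth $(\bm\Phi_j,\Psi_j)$, supplies $\|\bm\Pi_k\bm\Phi_j-\bm\Phi_{jh}\|_{\mathcal T_h}+\|h_K^{-1/2}(P_M\Psi_{jh}-\widehat\Psi_{jh})\|_{\partial\mathcal T_h}\le Ch\|\Psi_j\|_{H^2}\le Ch\|\varepsilon_{jh}^u\|_{\mathcal T_h}$, and $\|\nabla\Psi_{jh}\|_{\mathcal T_h}+\|\Psi_{jh}\|_{\mathcal T_h}\le C\|\varepsilon_{jh}^u\|_{\mathcal T_h}$.

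The decisive step is the decomposition $\Psi_{jh}-\widehat\Psi_{jh}=(\Pi_{k+1}\Psi_j-P_M\Psi_j)-\big[(\Pi_{k+1}\Psi_j-\Psi_{jh})-(P_M\Psi_j-\widehat\Psi_{jh})\big]$: the first group is a pure projection error of the \emph{smooth} $\Psi_j\in H^2$, and the second is exactly the dual jump controlled by \Cref{nabla_ine}, so that $\|h_K^{-1/2}(\Psi_{jh}-\widehat\Psi_{jh})\|_{\partial\mathcal T_h}\le Ch\|\varepsilon_{jh}^u\|_{\mathcal T_h}$, hence $\|\Psi_{jh}-\widehat\Psi_{jh}\|_{\partial\mathcal T_h}\le Ch^{3/2}\|\varepsilon_{jh}^u\|_{\mathcal T_h}$. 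Pairing this against $(\bm\Pi_k\bm q_j-\bm q_j)\cdot\bm n$ and $\bm\beta_j\cdot\bm n(P_Mu_j-u_j)$ yields the two $h^{3/2}$ terms; pairing $P_M\Psi_{jh}-\widehat\Psi_{jh}$ against $h_K^{-1}(\Pi_{k+1}u_j-u_j)$ together with the $L^2$-orthogonality of $\bm\Pi_k,\Pi_{k+1}$ in the volume $\bm\beta_j$-pairings against $\nabla\Psi_{jh},\Psi_{jh}$ yields the $\|\Pi_{k+1}u_j-u_j\|_{\mathcal T_h}$ term. When \Cref{nabla_ine} is used on the primal side to convert gradient and jump norms, it also leaves the factors $Ch\|\varepsilon_{jh}^{\bm q}\|_{\mathcal T_h}$ and $Ch\|h_K^{-1/2}(P_M\varepsilon_{jh}^u-\varepsilon_{jh}^{\widehat u})\|_{\partial\mathcal T_h}$, which I would keep on the right-hand side to be absorbed later by \Cref{lemma:step2_main_lemma1}; finally I divide through by $\|\varepsilon_{jh}^u\|_{\mathcal T_h}$.

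The hard part will be precisely this sharp treatment of $\langle(\bm\Pi_k\bm q_j-\bm q_j)\cdot\bm n,\Psi_{jh}-\widehat\Psi_{jh}\rangle_{\partial\mathcal T_h}$. A naive Cauchy--Schwarz that estimates $\Psi_{jh}-\widehat\Psi_{jh}$ through $\|(I-P_M)\Psi_{jh}\|_{\partial\mathcal T_h}$ alone only delivers $O(h^{1/2})$, hence an overall $O(h^{k+1})$ rate that destroys superconvergence; the missing half power is recovered exactly by treating $\Psi_{jh}-\widehat\Psi_{jh}$ as a single controllable jump and invoking the $H^2$-regularity \eqref{reg_e}. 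A further subtlety is to keep the whole duality estimate anchored on the \emph{first-order} energy bounds for the dual problem and never on the (still unproven) $L^2$ superconvergence of the dual, so that the argument stays free of circularity.
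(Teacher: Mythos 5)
Your overall architecture is a legitimate variant of the paper's duality argument, but it differs in one structural choice that turns out to matter: you test against a discrete HDG solution $(\bm\Phi_{jh},\Psi_{jh},\widehat\Psi_{jh})$ of the dual problem, whereas the paper tests \eqref{error_y} directly against the projections $(-\bm\Pi_k\bm\Phi_j,\Pi_{k+1}\Psi_j,P_M\Psi_j)$ of the exact dual solution. The gap sits exactly in your ``decisive step.'' You claim $\|h_K^{-1/2}(\Psi_{jh}-\widehat\Psi_{jh})\|_{\partial\mathcal T_h}\le Ch\|\varepsilon_{jh}^u\|_{\mathcal T_h}$ by splitting off $\Pi_{k+1}\Psi_j-P_M\Psi_j$ as ``a pure projection error of the smooth $\Psi_j\in H^2$.'' But $P_M$ projects onto $\mathcal P^k(e)$, so by \eqref{lemmainter_orthoe} the best available bound is $\|\Psi_j-P_M\Psi_j\|_{\partial K}\le Ch^{k+1/2}|\Psi_j|_{k+1,K}$; for $k=0$ this is only $O(h^{1/2})|\Psi_j|_{H^1}$, and it is sharp (take $\Psi_j=x_1$: then $\Pi_1\Psi_j-P_M\Psi_j=x_1-\bar x_1^{e}$ and $\|h_K^{-1/2}(\Pi_1\Psi_j-P_M\Psi_j)\|_{\partial K}\sim|\Psi_j|_{H^1(K)}$, not $h|\Psi_j|_{H^1(K)}$). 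Hence for $k=0$ you only obtain $\|\Psi_{jh}-\widehat\Psi_{jh}\|_{\partial\mathcal T_h}\le Ch^{1/2}\|\varepsilon_{jh}^u\|_{\mathcal T_h}$, the pairing with $(\bm\Pi_k\bm q_j-\bm q_j)\cdot\bm n$ degrades to $O(h^{k+1})\|\varepsilon_{jh}^u\|_{\mathcal T_h}$, and the superconvergence --- the paper's headline claim, precisely for $k=0$ --- is lost. (For $k\ge1$ your bound does hold, since $P_M$ then reproduces linears and $\|\Psi_j-P_M\Psi_j\|_{\partial K}\le Ch^{3/2}|\Psi_j|_{2,K}$.)

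The missing ingredient is an orthogonality/single-valuedness identity, not a sharper approximation estimate. Because $\bm q_j\cdot\bm n$ is single-valued across interior faces and $P_M\Psi_j\in M_h(0)$, one has $\langle\bm q_j\cdot\bm n,P_M\Psi_j\rangle_{\partial\mathcal T_h}=0=\langle\bm q_j\cdot\bm n,\Psi_j\rangle_{\partial\mathcal T_h}$, and since $\bm\Pi_k\bm q_j\cdot\bm n|_e$ is a polynomial of degree $k$ on each face, $\langle\bm\Pi_k\bm q_j\cdot\bm n,P_M\Psi_j\rangle_{\partial\mathcal T_h}=\langle\bm\Pi_k\bm q_j\cdot\bm n,\Psi_j\rangle_{\partial\mathcal T_h}$. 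Together these convert the face weight $\Pi_{k+1}\Psi_j-P_M\Psi_j$ into $\Pi_{k+1}\Psi_j-\Psi_j$, which is $O(h^{3/2})|\Psi_j|_{H^2}$ for \emph{all} $k\ge0$; an analogous identity handles the $\bm\beta_j\cdot\bm n(P_Mu_j-u_j)$ pairing. The paper's choice of $\mu_j=P_M\Psi_j$ as the dual test trace makes these identities available directly (they are the two displayed identities preceding its second expression for $\mathscr B_j(\varepsilon^{\bm q}_{jh},\varepsilon^u_{jh},\varepsilon^{\widehat u}_{jh};\cdot)$); in your framework you would first have to peel $\widehat\Psi_{jh}$ back to $P_M\Psi_j$ (the difference being the dual jump, which \emph{is} $O(h)$ in the weighted norm) and then apply the same identities to the remaining $\Pi_{k+1}\Psi_j-P_M\Psi_j$ rather than estimating it by Cauchy--Schwarz. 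A secondary point: the Step-2 energy and jump bounds must be re-derived for the adjoint operator $\mathscr C_j$ (the sign of the $\nabla\cdot\bm\beta_j$ contribution still works out under \eqref{beta_con}); you assert this transfer but do not verify it, and the paper avoids the issue entirely by never introducing a discrete dual solution.
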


\begin{proof}
	Consider the dual problem \eqref{Dual_PDE1} and let $\Theta_j = \varepsilon_{jh}^u$, we take  $(\bm r_j,w_j,\mu_j) = (-\bm\Pi_k\bm{\Phi}_j,\Pi_{k+1}\Psi_j,P_M\Psi_j)$ in \Cref{error_y} in  \Cref{lemma:step1_first_lemma},  we have
	\begin{align}\label{one}
		\begin{split}
			\hspace{0.1em}&\hspace{-0.1em}\mathscr B_j (\varepsilon^{\bm q}_{jh},\varepsilon^u_{jh},\varepsilon^{\widehat u}_{jh};-\bm\Pi_k\bm{\Phi}_j,\Pi_{k+1}\Psi_j,P_M\Psi_j)\\
			&=\mathscr C_j(\bm\Pi_k\bm{\Phi}_j,\Pi_{k+1}\Psi_j,P_M\Psi_j;- \varepsilon^{\bm q}_{jh},\varepsilon^u_{jh},\varepsilon^{\widehat u}_{jh}) \\
			&\quad + \langle \bm{\beta}_j\cdot\bm n(\varepsilon_{jh}^u - \varepsilon_{jh}^{\widehat u}), \Pi_{k+1}\Psi_j - P_M\Psi_j   \rangle_{\partial\mathcal T_h}\\
			&= -(c_j (\bm \Pi_k \bm \Phi_j - \bm \Phi_j),\varepsilon^{\bm q}_{jh})_{\mathcal{T}_h} +\langle  (\bm \Pi_k \bm \Phi_j - \bm \Phi_j)\cdot\bm n, \varepsilon_{jh}^u-  \varepsilon_{jh}^{\widehat u} \rangle_{\partial \mathcal T_h} \\
			&\quad +\left\langle h^{-1}_K ( \Pi_{k+1} \Psi_j- \Psi_j), P_M \varepsilon_{jh}^u - \varepsilon_{jh}^{\widehat u}\right\rangle_{\partial {\mathcal{T}_h}} + \|\varepsilon_{jh}^u\|_{\mathcal T_h}^2\\
			&\quad + ( \bm \beta_j (\Pi_{k+1} \Psi_j- \Psi_j), \nabla \varepsilon_{jh}^u)_{{\mathcal{T}_h}} -\langle \bm{\beta}_j\cdot\bm{n} (P_M \Psi_j - \Psi_j), \varepsilon_{jh}^u  - \varepsilon_{jh}^{\widehat u} \rangle_{\partial{{\mathcal{T}_h}}} \\
			&\quad + \langle \bm{\beta}_j\cdot\bm n(\varepsilon_{jh}^u - \varepsilon_{jh}^{\widehat u}), \Pi_{k+1}\Psi_j - P_M\Psi_j   \rangle_{\partial\mathcal T_h}. 
		\end{split}
	\end{align}
	On the other hand, by \eqref{error_y}, we have
	\begin{align*}
		\hspace{1em}&\hspace{-1em}\mathscr B_j(\varepsilon^{\bm q}_{jh},\varepsilon^u_{jh},\varepsilon^{\widehat u}_{jh};-\bm\Pi_k\bm{\Phi}_j,\Pi_{k+1}\Psi_j,P_M\Psi_j) \\
		&= - (c_j (\bm \Pi_k \bm q_j - \bm q_j),\bm\Pi_k\bm{\Phi}_j)_{\mathcal{T}_h} + \langle  (\bm \Pi_k \bm q_j - \bm q_j)\cdot\bm n, \Pi_{k+1} \Psi_j - P_M \Psi_j\rangle_{\partial \mathcal T_h} \\
		&\quad +\left\langle h^{-1}_K ( \Pi_{k+1} u_j- u_j), P_M \Pi_{k+1} \Psi_j - P_M \Psi_j\right\rangle_{\partial {\mathcal{T}_h}}-( \bm \beta_j (\Pi_{k+1} u_j - u_j), \nabla \Pi_{k+1}\Psi_j)_{{\mathcal{T}_h}} \\
		&\quad- (\nabla\cdot\bm \beta_j (\Pi_{k+1} u_j - u_j), \Pi_{k+1}\Psi_j)_{\mathcal T_h}+\langle \bm{\beta}_j\cdot\bm{n} (P_M u_j - u_j) , \Pi_{k+1}\Psi_j  - P_M\Psi_j\rangle_{\partial{{\mathcal{T}_h}}}.
	\end{align*}
	Since there holds
	\begin{align*}
		&\langle
		(\bm{\Pi}_k^{o}\bm q_j-\bm q_j)\cdot\bm n,P_M\Psi
		\rangle_{\partial\mathcal{T}_h}
		\nonumber\\
		&\qquad=
		\langle\bm{\Pi}_k^{o}\bm q_j\cdot\bm n,P_M\Psi
		\rangle_{\partial\mathcal{T}_h}
		-\langle	\bm q_j\cdot\bm n,P_M\Psi
		\rangle_{\partial\mathcal{T}_h}
		=	\langle\bm{\Pi}_k\bm q_j\cdot\bm n,P_M\Psi
		\rangle_{\partial\mathcal{T}_h},\\
		&\langle
		\bm\beta_j\cdot\bm n(P_Mu_j-u_j),P_M\Psi_j
		\rangle_{\partial\mathcal{T}_h}=0=
		\langle
		\bm\beta_j\cdot\bm n(P_Mu_j-u_j),\Psi_j
		\rangle_{\partial\mathcal{T}_h}.
	\end{align*}
	This gives 
	\begin{align}\label{two}
		\begin{split}
			\hspace{0.1em}&\hspace{-0.1em}\mathscr B_j(\varepsilon^{\bm q}_{jh},\varepsilon^u_{jh},\varepsilon^{\widehat u}_{jh};-\bm\Pi_k\bm{\Phi}_j,\Pi_{k+1}\Psi_j,P_M\Psi_j) \\
			&= - (c_j (\bm \Pi_k \bm q_j - \bm q_j),\bm\Pi_k\bm{\Phi}_j)_{\mathcal{T}_h} + \langle  (\bm \Pi_k \bm q_j - \bm q_j)\cdot\bm n, \Pi_{k+1} \Psi_j -  \Psi_j\rangle_{\partial \mathcal T_h} \\
			&\quad +\left\langle h^{-1}_K ( \Pi_{k+1} u_j- u_j), P_M \Pi_{k+1} \Psi_j - P_M \Psi_j\right\rangle_{\partial {\mathcal{T}_h}}-( \bm \beta_j (\Pi_{k+1} u_j - u_j), \nabla \Pi_{k+1}\Psi_j)_{{\mathcal{T}_h}} \\
			&\quad- (\nabla\cdot\bm \beta_j (\Pi_{k+1} u_j - u_j), \Pi_{k+1}\Psi_j)_{\mathcal T_h}+\langle \bm{\beta}_j\cdot\bm{n} (P_M u_j - u_j) , \Pi_{k+1}\Psi_j  - \Psi_j\rangle_{\partial{{\mathcal{T}_h}}}.
		\end{split}
	\end{align}

	Comparing the above two equalities \eqref{one} and \eqref{two}, we have
	\begin{align*}
		\|\varepsilon^u_{jh}\|^2_{\mathcal{T}_h}
		&=  - (c_j (\bm \Pi_k \bm q_j - \bm q_j),\bm\Pi_k\bm{\Phi}_j)_{\mathcal{T}_h} + (c_j (\bm \Pi_k \bm \Phi_j - \bm \Phi_j),\varepsilon^{\bm q}_{jh})_{\mathcal{T}_h} \\
		&\quad +  \langle  (\bm \Pi_k \bm q_j - \bm q_j)\cdot\bm n, \Pi_{k+1} \Psi_j -  \Psi_j\rangle_{\partial \mathcal T_h} \\
		&\quad +\left\langle h^{-1}_K ( \Pi_{k+1} u_j- u_j), P_M \Pi_{k+1} \Psi_j - P_M \Psi_j\right\rangle_{\partial {\mathcal{T}_h}}\\
		&\quad -\langle  (\bm \Pi_k \bm \Phi_j - \bm \Phi_j)\cdot\bm n, \varepsilon_{jh}^u-  \varepsilon_{jh}^{\widehat u} \rangle_{\partial \mathcal T_h} \\
		&\quad -\left\langle h^{-1}_K ( \Pi_{k+1} \Psi_j- \Psi_j), P_M \varepsilon_{jh}^u - \varepsilon_{jh}^{\widehat u}\right\rangle_{\partial {\mathcal{T}_h}} \\
		&\quad  - ( \bm \beta_j (\Pi_{k+1} \Psi_j- \Psi_j), \nabla \varepsilon_{jh}^u)_{{\mathcal{T}_h}}+\langle \bm{\beta}_j \cdot\bm{n} (P_M \Psi_j - \Psi_j), \varepsilon_{jh}^u -  \varepsilon_{jh}^{\widehat u} \rangle_{\partial{{\mathcal{T}_h}}}\\
		&\quad  - \langle \bm{\beta}_j\cdot\bm n(\varepsilon_{jh}^u - \varepsilon_{jh}^{\widehat u}), \Pi_{k+1}\Psi_j - P_M\Psi_j   \rangle_{\partial\mathcal T_h}-( \bm \beta_j (\Pi_{k+1} u_j - u_j), \nabla \Pi_{k+1}\Psi_j)_{{\mathcal{T}_h}}\\
		&\quad - (\nabla\cdot\bm \beta_j (\Pi_{k+1} u_j - u_j), \Pi_{k+1}\Psi_j)_{\mathcal T_h}+\langle \bm{\beta}_j\cdot\bm{n} (P_M u_j - u_j) , \Pi_{k+1}\Psi_j-\Psi_j \rangle_{\partial{{\mathcal{T}_h}}}\\
		&=:\sum_{i=1}^{12} R_i.
	\end{align*}
	Next, we estimate $\{R_i\}_{i=1}^{12}$ term by term. First,
	\begin{align*}
		R_1 + R_2 &=  - ((c_j - \Pi_0 c_j) (\bm \Pi_k \bm q_j - \bm q_j),\bm\Pi_k\bm{\Phi}_j)_{\mathcal{T}_h} + (c_j (\bm \Pi_k \bm \Phi_j - \bm \Phi_j),\varepsilon^{\bm q}_{jh})_{\mathcal{T}_h} \\
		&\le Ch|c_j|_{1,\infty}\|\bm \Pi_k \bm q_j - \bm q_j\|_{\mathcal{T}_h}\|\varepsilon^u_{jh}\|_{\mathcal T_h} +Ch  \|\varepsilon_{jh}^{\bm q}\|_{\mathcal T_h} \|\varepsilon^u_{jh}\|_{\mathcal T_h}. 
	\end{align*}
	Then, we have 
	\begin{align*}
		\hspace{1em}&\hspace{-1em}R_3+ R_4 + R_5 + R_6 +R_9\\
		&\le C  h^{3/2} \left( \|\bm \Pi_k \bm q_j - \bm q_j\|_{\partial\mathcal  T_h} + \| h_K^{-1}(\Pi_{k+1} u_j -  u_j)\|_{\partial\mathcal  T_h}+ \|h_K^{-1/2}(\varepsilon_{jh}^u - \varepsilon_{jh}^{\widehat u})\|_{\partial\mathcal  T_h}  \right.  \\
		&\qquad \qquad\left.+C \|h_K^{-1/2}(P_M\varepsilon_{jh}^u - \varepsilon_{jh}^{\widehat u})\|_{\partial\mathcal  T_h}+ C\|h_K^{-1/2}(\varepsilon_{jh}^u - \varepsilon_{jh}^{\widehat u})\|_{\partial\mathcal  T_h} \right) \|\varepsilon^u_{jh}\|_{\mathcal{T}_h}.
	\end{align*}
	For the term $R_7$, by \Cref{poincare}, we get
	\begin{align*}
		R_7 \le C h^2 (\|\varepsilon^{\bm q}_{jh}\|_{\mathcal T_h}+\|h_K^{-1/ 2}(P_M\varepsilon^u_{jh}-\varepsilon^{\widehat u}_{jh})\|_{\partial\mathcal T_h})\|\varepsilon^u_{jh}\|_{\mathcal{T}_h}.
	\end{align*}
	For the terms $R_8$ and $R_{12}$, we have
	\begin{align*}
		R_8 + R_{12}  &= \langle \bm{\beta}_j\cdot\bm{n} (P_M \Psi_j - \Psi_j), \varepsilon_{jh}^u  \rangle_{\partial{{\mathcal{T}_h}}}+\langle \bm{\beta}\cdot\bm{n} (P_M u_j - u_j) , \Pi_{k+1}\Psi_j \rangle_{\partial{{\mathcal{T}_h}}}\\
		&= \langle \bm{\beta}_j\cdot\bm{n} (P_M \Psi_j - \Psi_j), \varepsilon_{jh}^u- \varepsilon_{jh}^{\widehat u} \rangle_{\partial{{\mathcal{T}_h}}}\\
		&\quad +\langle \bm{\beta}_j\cdot\bm{n} (P_M u_j - u_j) , \Pi_{k+1}\Psi_j - \Psi_j \rangle_{\partial{{\mathcal{T}_h}}}\\
		&\le C (h \|h_K^{-1/2} (P_M\varepsilon_{jh}^u - \varepsilon_{jh}^{\widehat u})\|_{\partial\mathcal T_h} + h^{3/2}\|P_M u_j - u_j \|_{\partial \mathcal T_h})\|\varepsilon^u_{jh}\|_{\mathcal{T}_h}.
	\end{align*}
	For the terms $R_{10}$ and $R_{11}$, we use the boundness of $\Pi_{k+1}$ to get
	\begin{align*}
		R_{10} + R_{11} &\le C \|\Pi_{k+1} u_j -u_j\|_{\mathcal T_h} (\|\nabla\Pi_{k+1} \Psi_j\|_{\mathcal T_h} + \|\Pi_{k+1} \Psi_j\|_{\mathcal T_h})\\
		&\le C\|\Pi_{k+1} u_j -u_j\|_{\mathcal T_h} (\|\nabla(\Pi_{k+1} \Psi_j - \Psi_j)\|_{\mathcal T_h}  +\|\nabla\Psi_j\|_{\mathcal T_h}+ \|\Pi_{k+1} \Psi_j\|_{\mathcal T_h})\\
		&\le C\|\Pi_{k+1} u_j -u_j\|_{\mathcal T_h} \|\varepsilon^u_{jh}\|_{\mathcal{T}_h}.
	\end{align*}
	
	Thus,  combining all the estimates above give 
	\begin{align*}
		\|\varepsilon^u_{jh}\|_{\mathcal{T}_h}
		&\le C  h^{3/2} \|\bm \Pi_k \bm q_j - \bm q_j\|_{\partial\mathcal  T_h} + Ch \|\bm \Pi_k \bm q_j - \bm q_j\|_{\mathcal  T_h}+Ch  \|\varepsilon_{jh}^{\bm q}\|_{\mathcal T_h}\\
		&\quad +  Ch\| h_K^{-1}(\Pi_{k+1} u_j -  u_j)\|_{\partial\mathcal  T_h}+ Ch\|h_K^{-1/2}(\varepsilon_{jh}^u - \varepsilon_{jh}^{\widehat u})\|_{\partial\mathcal  T_h}   \\
		&\quad  + Ch\|h_K^{-1/2}(P_M\varepsilon_{jh}^u - \varepsilon_{jh}^{\widehat u})\|_{\partial\mathcal  T_h} + Ch^{3/2}\|P_M u_j - u_j \|_{\partial \mathcal T_h} \\
		&\quad + C\|\Pi_{k+1} u_j -u_j\|_{\mathcal T_h}.
	\end{align*}
\end{proof}

As a consequence, a simple application of the triangle inequality gives optimal convergence rates for $\|u_j -\overline u_{jh}\|_{\mathcal T_h}$  and $	\|h_K^{1/2}( \overline{u}_{jh}-\widehat{\overline u}_{jh})\|_{\partial\mathcal{T}_h} $:

\begin{lemma}\label{lemma:step3_conv_rates}
	For $h$ small enough, we have
	\begin{align*}
		\|u_j -\overline u_{jh}\|_{\mathcal T_h}\le  \|\Pi_{k+1} u_j -u_j\|_{\mathcal T_h} + \| \Pi_{k+1} u_j - \overline u_{jh}\|_{\mathcal T_h} \le C h^{k+2},
	\end{align*}
	and 
	\begin{align*}
		\|h_K^{1/2}( \overline{u}_{jh}-\widehat{\overline u}_{jh})\|_{\partial\mathcal{T}_h} 
		&\le \|h_K^{1/2}\varepsilon_{jh}^n\|_{\partial\mathcal{T}_h}
		+\|h_K^{1/2}( P_Mu_j-\widehat{\overline u}_{jh})\|_{\partial\mathcal{T}_h}
		\\
		&\quad +\|h_K^{1/2}( \Pi_{k+1}u_j-P_Mu_j)\|_{\partial\mathcal{T}_h}\\
		&\le Ch^{k+1}.
	\end{align*}
\end{lemma}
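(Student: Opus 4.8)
The plan is to derive both estimates from the triangle inequality, splitting each quantity into a pure $L^2$-projection error controlled directly by \Cref{lemmainter} and a ``discrete part'' measured by the projection-of-the-error quantities $\varepsilon^{\bm q}_{jh},\varepsilon^u_{jh},\varepsilon^{\widehat u}_{jh}$, which are already controlled by the energy estimate \Cref{lemma:step2_main_lemma1} and the duality estimate \Cref{lemma:step3_first_lemma}. Before touching the two displayed bounds I would first record the consequence of \Cref{lemma:step2_main_lemma1}: inserting the projection rates of \Cref{lemmainter} (namely $\|\bm q_j-\bm\Pi_k\bm q_j\|_{\mathcal T_h}\le Ch^{k+1}$, $\|h_K^{1/2}(\bm q_j-\bm\Pi_k\bm q_j)\|_{\partial\mathcal T_h}\le Ch^{k+1}$, $\|h_K^{-1/2}(u_j-\Pi_{k+1}u_j)\|_{\partial\mathcal T_h}\le Ch^{k+1}$, $\|h_K^{1/2}(u_j-P_Mu_j)\|_{\partial\mathcal T_h}\le Ch^{k+1}$ and $\|u_j-\Pi_{k+1}u_j\|_{\mathcal T_h}\le Ch^{k+2}$) into its right-hand side gives at once
\begin{align*}
	\|\varepsilon^{\bm q}_{jh}\|_{\mathcal T_h}+\|h_K^{-1/2}(P_M\varepsilon^u_{jh}-\varepsilon^{\widehat u}_{jh})\|_{\partial\mathcal T_h}\le Ch^{k+1},
\end{align*}
and then \Cref{nabla_ine} upgrades this to $\|h_K^{-1/2}(\varepsilon^u_{jh}-\varepsilon^{\widehat u}_{jh})\|_{\partial\mathcal T_h}\le Ch^{k+1}$ as well. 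These $h^{k+1}$ bounds are the only inputs needed below.

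For the first estimate I would write $u_j-\overline u_{jh}=(u_j-\Pi_{k+1}u_j)+\varepsilon^u_{jh}$. The projection term is $Ch^{k+2}$ by \eqref{lemmainter_orthoo} with $\ell=k+1$, so it remains to show $\|\varepsilon^u_{jh}\|_{\mathcal T_h}\le Ch^{k+2}$. For this I would feed the bounds of the previous paragraph, together with the projection rates, into the right-hand side of \Cref{lemma:step3_first_lemma}, checking that every term reaches $h^{k+2}$: the volume flux term gives $h\cdot h^{k+1}$; the boundary flux term gives $h^{3/2}\cdot h^{k+1/2}$; the term weighted by $h^{3/2}$ that carries $\|h_K^{-1}(\Pi_{k+1}u_j-u_j)\|_{\partial\mathcal T_h}$ gives $h^{3/2}\cdot h^{k+1/2}$; the two jump terms contribute $h\cdot h^{k+1}$ and $h^{3/2}\cdot h^{k+1}$; and the remaining $L^2$-projection term $\|\Pi_{k+1}u_j-u_j\|_{\mathcal T_h}$ is already $h^{k+2}$. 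Summing and applying the triangle inequality yields the claim.

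For the second estimate I would decompose $\overline u_{jh}-\widehat{\overline u}_{jh}=-\varepsilon^u_{jh}+(\Pi_{k+1}u_j-P_Mu_j)+\varepsilon^{\widehat u}_{jh}$, exactly the three-term splitting displayed in the statement. The first piece is handled by the inverse inequality \eqref{inverse_esti}, giving $\|h_K^{1/2}\varepsilon^u_{jh}\|_{\partial\mathcal T_h}\le C\|\varepsilon^u_{jh}\|_{\mathcal T_h}\le Ch^{k+2}$. For the third piece I would write $\varepsilon^{\widehat u}_{jh}=P_M\varepsilon^u_{jh}-(P_M\varepsilon^u_{jh}-\varepsilon^{\widehat u}_{jh})$; the $L^2(e)$-stability of $P_M$ together with \eqref{inverse_esti} bounds $\|h_K^{1/2}P_M\varepsilon^u_{jh}\|_{\partial\mathcal T_h}$ by $C\|\varepsilon^u_{jh}\|_{\mathcal T_h}\le Ch^{k+2}$, while $\|h_K^{1/2}(P_M\varepsilon^u_{jh}-\varepsilon^{\widehat u}_{jh})\|_{\partial\mathcal T_h}\le h\,\|h_K^{-1/2}(P_M\varepsilon^u_{jh}-\varepsilon^{\widehat u}_{jh})\|_{\partial\mathcal T_h}\le Ch^{k+2}$. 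The middle piece is split as $(\Pi_{k+1}u_j-u_j)+(u_j-P_Mu_j)$, whose weighted traces are $Ch^{k+2}$ and $Ch^{k+1}$ respectively via \eqref{lemmainter_orthoe}; the $u_j-P_Mu_j$ contribution is the dominant $h^{k+1}$ term and fixes the overall rate.

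The whole argument is routine bookkeeping once the earlier lemmas are in hand, so the only thing to watch is the matching of $h$-powers in the trace terms. The one place I would be most careful is verifying that each boundary term on the right of \Cref{lemma:step3_first_lemma}, weighted by its $h^{3/2}$ (or $h$) prefactor, really attains $h^{k+2}$ rather than only $h^{k+3/2}$; this hinges on reading the weight as $h^{3/2}\,\|h_K^{-1}(\Pi_{k+1}u_j-u_j)\|_{\partial\mathcal T_h}$ (the form appearing in the proof of that lemma), since $\|h_K^{-1}(\Pi_{k+1}u_j-u_j)\|_{\partial\mathcal T_h}\le Ch^{k+1/2}$ only yields $h^{k+2}$ when multiplied by $h^{3/2}$. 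No genuine obstacle arises: the superconvergence has already been secured in \Cref{lemma:step3_first_lemma}, and this lemma merely repackages it through the triangle inequality together with the inverse inequality \eqref{inverse_esti} and the projection estimates of \Cref{lemmainter}.
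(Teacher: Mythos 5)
Your proof is correct and follows essentially the same route the paper intends: the paper gives no detail beyond the displayed triangle inequalities, and your bookkeeping --- feeding the projection rates of \Cref{lemmainter} into \Cref{lemma:step2_main_lemma1}, upgrading via \Cref{nabla_ine}, inserting everything into \Cref{lemma:step3_first_lemma} for the $h^{k+2}$ bound on $\|\varepsilon^u_{jh}\|_{\mathcal T_h}$, and then handling the trace quantity with the inverse inequality \eqref{inverse_esti} and the exact three-term decomposition displayed in the statement --- is precisely the intended argument. You are also right to flag the one delicate point: the term written as $Ch\|h_K^{-1}(\Pi_{k+1}u_j-u_j)\|_{\partial\mathcal T_h}$ in the statement of \Cref{lemma:step3_first_lemma} would only give $h^{k+3/2}$, and one must use the $h^{3/2}$ weight actually derived in that lemma's proof to reach the full $h^{k+2}$.
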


\bibliographystyle{plain}
\bibliography{/Users/ywzfg/Dropbox/Research/Bib/Model_Order_Reduction,/Users/ywzfg/Dropbox/Research/Bib/Ensemble,/Users/ywzfg/Dropbox/Research/Bib/HDG,/Users/ywzfg/Dropbox/Research/Bib/Interpolatory,/Users/ywzfg/Dropbox/Research/Bib/Mypapers,/Users/ywzfg/Dropbox/Research/Bib/Added}

\end{document}